\documentclass[11pt,a4paper]{article}

\usepackage[utf8]{inputenc}
\usepackage[T1]{fontenc}
\usepackage[english]{babel}
\usepackage{amsmath,amsthm,amssymb,amsfonts}
\usepackage{mathtools}

\usepackage[margin=2.5cm]{geometry}
\usepackage{setspace}
\theoremstyle{plain}
\newtheorem{theorem}{Theorem}[section]
\newtheorem{lema}[theorem]{Lemma}
\newtheorem{proposition}[theorem]{Proposition}
\newtheorem{coro}[theorem]{Corollary}

\theoremstyle{definition}
\newtheorem{definition}[theorem]{Definition}
\newtheorem{example}[theorem]{Example}
\newtheorem{remark}[theorem]{Remark}
\newtheorem{asumption}{Assumption}

\usepackage{xcolor}
\usepackage[colorlinks=true,
            linkcolor=blue,
            citecolor=blue,
            urlcolor=blue,
            breaklinks=true]{hyperref}

\title{\textbf{Process-Based Lagrange Multipliers for Nonconvex Set-Valued Optimization}}

\author{
Fernando García-Castaño\thanks{Department of Mathematics, University of Alicante, Carretera San Vicente del Raspeig, s/n, 03690 San Vicente del Raspeig, Alicante, Spain. Email: \texttt{fernando.gc@ua.es}} 
\and 
Miguel Ángel Melguizo-Padial\thanks{Department of Mathematics, University of Alicante. Email: \texttt{ma.mp@ua.es}}
}

\date{\today} 

\begin{document}

\maketitle

\begin{abstract}
We develop a Lagrange multiplier theory for nonconvex set-valued optimization problems under Lipschitz-type regularity conditions. In contrast to classical approaches that rely on continuous linear functionals, we introduce closed convex processes---set-valued mappings whose graphs form closed convex cones---as generalized Lagrange multipliers. This geometric framework extends fundamental separation principles from convex analysis to settings where neither convexity nor differentiability is assumed.

Our main results establish the existence of multiplier processes under verifiable structural conditions: Lipschitz continuity at a reference point, existence of a bounded base for the ordering cone, and a nondegeneracy assumption ensuring proper isolation of optimal values. We prove that every such process preserves global optimality in the sense that nondominated (respectively, minimal) points of the primal problem remain nondominated (respectively, minimal) in the augmented problem obtained by penalizing constraints through the process.

For scalar optimization, we establish a one-to-one correspondence between multiplier processes and lower semicontinuous sublinear functions, yielding exact penalty formulations without additional constraint qualifications. An illustrative example shows that the nonemptiness of the core of the ordering cone, although sufficient, is not a necessary assumption, and that it holds in common Banach spaces.

As an application, we show how the abstract framework naturally applies to set-valued vector equilibrium problems, where the equilibrium condition is embedded as a constraint and the multiplier process encodes admissible directions of comparison. An explicit example illustrates the construction in a concrete infinite-dimensional setting.

Our results provide a rigorous geometric foundation for global optimality conditions in nonconvex optimization, complementing classical theory and offering new structural insights that may inform future algorithmic developments.
\end{abstract}

\noindent\textbf{Keywords:} Lagrange multiplier, set-valued optimization, convex process, nonconvex optimization, Lipschitz continuity, vector equilibrium, global optimality

\noindent\textbf{MSC 2020:} 46N10, 90C26, 90C29, 90C33, 90C46, 90C48

\section{Introduction}\label{sec:introduction}
The classical theory of Lagrange multipliers is traditionally divided into two main frameworks: a global theory, which applies to convex optimization problems with inequality constraints, and a local theory, which applies to differentiable problems with equality and/or inequality constraints. Despite the different mathematical settings, the underlying principles of both theories are essentially the same. In each case, the existence of a continuous linear functional is required—either to support a feasible set or to be tangent to appropriate level sets and constraint boundaries; see \cite{Luenberger1969}.

When convexity or differentiability assumptions fail, one might expect that the classical theory could still be carried out provided that appropriate continuous linear functionals were available. However, the classical Lagrange multiplier framework breaks down precisely in those situations where such functionals cannot be found or do not exist. This limitation raises a natural question: Can we extend the geometric principles underlying Lagrange multiplier theory to nonconvex and nondifferentiable settings by replacing linear functionals with more general geometric objects?

In this work, we develop a theory of Lagrange multipliers in which continuous linear functionals are replaced by closed convex processes. This new approach allows us to handle problems that lack convexity or differentiability, and more importantly, situations where continuous linear functionals cannot be determined---thus complementing and extending the classical theory. The key insight is that when approximating the graph of a function through a process (i.e., via a cone), fewer technical conditions are needed compared to approximation using a hyperplane. Nevertheless, although the theory with processes imposes fewer structural restrictions, it is complete in the sense that Lagrange multipliers always exist under verifiable geometric conditions, even in nonconvex and nondifferentiable problems.


\subsection{Motivation and main contributions}

Classical Lagrange theory for convex or differentiable problems requires a hyperplane to support the graph of the objective function. By contrast, our framework employs closed convex processes---set-valued mappings whose graphs form closed convex cones---as generalized multipliers. This geometric perspective offers several key advantages. First, the process-based approach extends applicability to optimization problems involving set-valued mappings under Lipschitz-type regularity conditions, without requiring convexity or differentiability of the objective or constraint functions. Second, we extend fundamental separation principles from convex analysis to nonconvex regimes, providing a rigorous theoretical foundation that deepens our understanding of global optimality structures, paralleling the well-established role of optimality conditions in convex optimization. Third, we provide concrete, verifiable conditions---involving Lipschitz continuity at a point, bounded bases for ordering cones, and nondegeneracy assumptions---that ensure the existence of Lagrange multiplier processes and are satisfied in many classical ordered Banach spaces. Finally, we demonstrate how our abstract framework applies naturally to set-valued vector equilibrium problems, where the equilibrium condition is embedded into a parametric optimization framework and the resulting multiplier encodes admissible directions of comparison.

This geometric perspective reveals new structural insights into nonconvex problems and establishes a mathematical basis that may help guide the development of future algorithmic approaches, much as classical convex optimality theory (see, e.g., \cite{Rockafellar1970,KTZ2015,GoeRiaTamZal2023}) has influenced the design of computational methods over the past decades (see, e.g., \cite{NesterovNemirovski1994,BoydVandenberghe2004,ParikhBoyd2014,BauschkeCombettes2017,BoydADMM2011}).

\subsection{Structure of the paper}
The remainder of this paper is organized as follows. Section~\ref{sec:preliminaries} introduces the necessary notation and preliminary results on cones, processes, and set-valued mappings in normed spaces. In Section~\ref{sec:general_framework}, we present the general Lagrange multiplier framework for parametric set-valued optimization problems. We define the notion of a Lagrange multiplier as a closed convex process and state the main structural assumptions (Assumption~\ref{hipotesisGeneral}) under which our theory is developed. The central result, Theorem~\ref{ThLagMult11}, establishes that every process satisfying these geometric conditions acts as a Lagrange multiplier, preserving minimality or nondominance in the augmented problem. We specialize the theory to the scalar case ($Y = \mathbb{R}$) and establish a one-to-one correspondence between multiplier processes and lower semicontinuous sublinear functions (Proposition~\ref{proposition:biyeccion_Gamma_Gamma'} and Theorem~\ref{ThLagMult11_real}). In addition, a single-valued version of our theory is provided (Theorem~\ref{teorema_LM_caso_escalar_univalorado}). Two illustrative examples demonstrate the role and scope of the core conditions of the corresponding ordering cones.

Section~\ref{sec:verification_lipschitz} is devoted to verifying the general assumptions under Lipschitz-type regularity and structural conditions on the ordering cone. The main result, Theorem~\ref{teor:TML}, provides verifiable conditions---Lipschitz continuity of the composite mapping at the origin, existence of a bounded base for the ordering cone, and a nondegeneracy condition---that guarantee the nonemptiness of the family of admissible Lagrange multipliers. We develop technical tools involving conic dilations and separation arguments, culminating in Theorem~\ref{teor:TML_real} (scalar case) and Theorem~\ref{teor:TML_real_single_valued} (single-valued case), which prove that the optimal value can be recovered through exact penalization with explicitly constructed multipliers under minimal regularity assumptions.

In Section~\ref{sec:equilibrium_application}, we show how the abstract theory applies to set-valued vector equilibrium problems, a class of problems that has been extensively studied in the literature; see, e.g., the monographs \cite{KTZ2015,GoeRiaTamZal2023} and the references therein. By embedding the equilibrium condition as a constraint, we reformulate vector equilibrium problems within the process-based Lagrange framework and interpret the resulting multiplier process as a geometric object encoding admissible directions of comparison and constraint interaction. An explicit infinite-dimensional example in $C[0,1]$ illustrates the construction and concrete role of the multiplier process in this equilibrium setting, thereby providing a unified geometric interpretation of equilibrium conditions in terms of variational optimality.

\subsection{Relation to prior work}

The use of processes in duality theory has its roots in the work of Rockafellar on conjugate duality and the calculus of set-valued mappings \cite{Rockafellar1970}. The notion of a process as a set-valued mapping with a conical graph was systematically studied by Aubin and Frankowska \cite{aubin1990set}, who developed a comprehensive theory of linear processes and their role in variational analysis. Our contribution extends these ideas by introducing processes as Lagrange multipliers in nonconvex set-valued optimization, thereby providing a genuine generalization of classical multiplier theory that does not rely on convexity or differentiability.

In the context of convex set-valued optimization, the authors previously developed a Lagrange duality framework using pointed processes \cite{GC-Melguizo2022}. The present work removes the convexity assumption entirely and introduces new geometric separation arguments based on conic dilations and Lipschitz regularity at a point, which are essential for handling the nonconvex case.

\section{Preliminaries and Notation}\label{sec:preliminaries}

Let $Y$ and $Z$ be normed spaces with topological duals $Y^*$ and $Z^*$. For $A \subset Y$, we write $\operatorname{cl}(A)$, $\operatorname{int}(A)$, $\operatorname{core}(A)$, $\operatorname{bd}(A)$, $A^c$, $\operatorname{co}(A)$, and $\operatorname{cone}(A)$ for the closure, interior, core, boundary, complement, convex hull, and conic hull of $A$, respectively; parentheses may be omitted when the context is clear. We denote by $\|\cdot\|$ the norm on $Y$, by $\|\cdot\|_*$ the dual norm on $Y^*$, and by $0_Y$ the origin of $Y$. Let $B_Y := \{y \in Y : \|y\| \le 1\}$ and $B_Y^\circ := \{y \in Y : \|y\| < 1\}$ be the closed and open unit balls, respectively, and $S_Y := \{y \in Y : \|y\| = 1\}$ the unit sphere. For $y \in Y$ and $r>0$, we set $B(y,r) := \{y' \in Y : \|y'-y\| \le r\}$ and $B^\circ(y,r) := \{y' \in Y : \|y'-y\| < r\}$. Recall that the core of a set $A \subset Y$ is defined as 
\[
\operatorname{core}(A) := \{\, a \in A : \forall x \in Y,\ \exists \delta > 0 \text{ such that } a+[0,\delta]x \subset A \,\}.
\]
We consider the sum of two subsets in $Y$ in the usual way, adopting the convention $A+\varnothing=\varnothing+A=\varnothing$ for every subset $A \subset Y$. 

A non-empty subset $K\subset Y$ is called a \emph{cone} if $\alpha K \subset K$ for all $\alpha \in \mathbb{R}_+$, where $\mathbb{R}_+$ denotes the set of non-negative real numbers. A cone $K\subset Y$ is said to be \emph{non-trivial} if $\{0_Y\}\subsetneq K \subsetneq Y$. All cones in this manuscript are assumed to be non-trivial unless stated otherwise. A cone $K\subset Y$ is said to be \emph{pointed} if $K \cap (-K)=\{ 0_Y\}$ and \emph{solid} if $\operatorname{int}(K)\neq \varnothing$.

A non-empty convex subset $B$ of a convex cone $K\subset Y$ is said to be a \emph{base} for $K$ if $0_Y \notin \operatorname{cl}(B)$ and for every $y \in K\setminus\{0_Y\}$ there exist unique $\lambda_y > 0$ and $b_y\in B$ such that $y=\lambda_y b_y$.

Given a cone $K\subset Y$, its \emph{dual cone} is defined by 
\[
K^*:=\{f \in Y^*\colon f(y)\geq 0 \ \text{for all } y \in K\},
\]
and the \emph{quasi-relative interior} of $K^*$ by 
\[
K^{\#}:=\{f \in Y^*\colon f(y)> 0 \ \text{for all } y \in K\setminus\{0_Y\}\}.
\]
It is known that a convex cone $C\subset Y$ has a base if and only if $C^{\#}\neq \varnothing$, and the latter implies that $C$ is pointed. In particular, for every $f \in C^{\#}$ and $\lambda>0$, the set $B:=\{x \in C\colon f(x)=\lambda\}$ is a base for $C$ (see, e.g., \cite[Theorem~1.47]{AliprantisTourky2007}).

A convex cone $K$ is said to have a \emph{bounded base} if there exists a base $B$ for $K$ such that $B$ is a bounded subset of $Y$. It is known that $K$ has a bounded base if and only if $0_Y$ is a denting point for $K$ (see \cite{GARCIACASTANO20151178,GARCIACASTANO2019,GARCIACASTANO2021} for further information about dentability and optimization).

A mapping $p:Y\to \mathbb{R}$ is said to be \emph{sublinear} if it is positively homogeneous and subadditive, that is, for all $x,y \in Y$ and $\lambda \ge 0$, 
\[
p(\lambda x) = \lambda p(x), \quad p(x+y) \le p(x) + p(y).
\]

A mapping $f:Y\to\mathbb{R}$ defined on a topological space $Y$ is said to be \emph{lower semicontinuous} at a point $y_0\in Y$ if 
\[
\liminf_{y\to y_0} f(y) \ge f(y_0),
\]
and lower semicontinuous on $Y$ if it is lower semicontinuous at every point of $Y$.

Given a set-valued map $F: Z \rightrightarrows Y$, we identify $F$ with its graph, which is defined by 
\[
\operatorname{Gph}(F):=\{(z,y)\in Z\times Y \colon y\in F(z)\}.
\]
The \emph{domain} of $F$ is defined by $\operatorname{Dom}(F):=\{z\in Z: F(z) \neq \varnothing\}$ and the \emph{image} of $F$ by $\operatorname{Im}(F):=\bigcup_{z \in \operatorname{Dom}(F)} F(z)$. For any set $A\subset \operatorname{Dom}(F)$, we denote the image of $A$ under $F$ by $F(A):=\bigcup_{a \in A}F(a)$.

A set-valued map $\Delta:Z \rightrightarrows Y$ is said to be a \emph{process} if $\operatorname{Graph}(\Delta)$ is a cone. A process $\Delta$ is said to be \emph{convex} (resp.\ \emph{closed}, \emph{pointed}) if $\operatorname{Graph}(\Delta)$ is convex (resp.\ closed, pointed), \emph{proper} if $\operatorname{Dom}(\Delta)=Z$, \emph{real} if $Y=\mathbb{R}$, and \emph{constant} if there exists a cone $K\subset Y$ such that $\Delta(z)=K$ for each $z\in \operatorname{Dom}(\Delta)$. We denote by $P(Z,Y)$ the set of all proper closed convex processes $\Delta$ from $Z$ into $Y$. According to \cite[Definition~2.1.3]{aubin1990set}, the norm of a process $\Delta\in P(Z,Y)$ is equal to
\begin{equation} \label{def:norma_proceso}
\|\Delta\|=\sup_{z\in Z} \inf_{y\in \Delta(z)}\frac{\|y\|}{\|z\|}=\sup_{z\in Z}\frac{d(0_Y,\Delta(z))}{\|z\|} =\sup_{z\in B_Z}\inf_{y\in \Delta(z)}\|y\|=\sup_{z\in S_Z} d(0_Y,\Delta(z)).
\end{equation}

Let $Y_+\subset Y$ be a convex cone. For arbitrary $y_1$, $y_2\in Y$, we write $y_1\leq y_2$ if and only if $y_2-y_1\in Y_+$. Then $\leq$ defines a reflexive and transitive relation (a preorder) on $Y$, and the cone $Y_+$ is called the \emph{ordering cone} on $Y$. We say that a point $y_0\in Y$ is \emph{nondominated} by a set $A\subset Y$, written $y_0 \in \mathrm{ND}(A \mid Y_+)$, if $A \cap (y_0-Y_+) \subset y_0+Y_+$. We say that a point $y_0 \in Y$ is a \emph{minimal point} of a set $A\subset Y$, written $y_0\in \mathrm{Min}(A\mid Y_+)$, if $y_0 \in \mathrm{ND}(A \mid Y_+)$ and $y_0\in A$. When the ordering cone $Y_+$ is clear from the context, we simply write $y_0\in \mathrm{ND}(A)$ and $y_0\in \mathrm{Min}(A)$, respectively.

If $Y_+$ is pointed, then the relation $\leq$ becomes an order, and $y_0\in \mathrm{Min}(A)$ if and only if $A \cap (y_0-Y_+) =\{y_0\}$. Note that in the real line with the usual order, minimal points become minima, and any nondominated point belonging to the closure of a set becomes its infimum. 

Let $K\subset Y$ be a cone. A functional $p:Y\rightarrow \mathbb{R}$ is said to be \emph{$K$-monotone} (respectively, \emph{strictly $K$-monotone}) if $p(y_1)\geq p(y_2)$ (respectively, $p(y_1)> p(y_2)$) for every $y_1$, $y_2\in Y$ such that $y_1\in y_2 +K$ (respectively, $y_1\in y_2 +(K\setminus \{0_Y\})$).

\section{The Optimization Problem and the General Lagrange Duality Framework} \label{sec:general_framework}

In this section, we introduce the parametric set-valued optimization problem under study and develop the main theoretical framework. We define the notion of a Lagrange multiplier as a closed convex process, state the key geometric assumptions that enable our theory (Assumption~\ref{hipotesisGeneral}), and prove the fundamental multiplier theorem (Theorem~\ref{ThLagMult11}). We then specialize these results to the scalar case, establishing an explicit connection with sublinear penalty functions.

Hereafter, we consider three normed spaces $X$, $Y$, and $Z$. The space $Y$ is endowed with a non-trivial ordering cone $Y_+ \subset Y$. We focus on the following family of parametric set-valued optimization problems:
\[
\text{Minimize } F(x) \quad \text{subject to } x \in \Omega,\ z \in G(x), \quad (P(z)), \ z \in U,
\]
where $F:\Omega \rightrightarrows Y$ and $G:\Omega \rightrightarrows Z$ are set-valued mappings, and $U \subset Z$ denotes an open neighborhood of the origin $0_Z$. We define the set-valued map
\[
V: U \rightrightarrows Y, \quad V(z) := F \circ G^{-1}(z) = F(\{x \in \Omega : z \in G(x)\}) = \bigcup_{\substack{x \in \Omega \\ z \in G(x)}} F(x).
\]
It is not difficult to prove the equality
\[
\operatorname{Gph(V)}= \bigcup_{\substack{z \in U \\ x \in \Omega \cap G^{-1}(z)}} (G(x)\times F(x)),  
\]
where we adopt the convention
\[
\varnothing \times A = 
A \times \varnothing = 
\varnothing \times \varnothing = 
\varnothing,
\quad \text{for every set } A.
\]
We shall assume throughout that 
\begin{equation}\label{Hipotesis_regularidad}
V(z) \neq \varnothing \quad \text{for every } z \in U,
\end{equation}
so that the domain of \(V\) coincides with \(U\).

\begin{remark}\label{remark:problema_P'}
The program
\[
\text{Minimize } F(x) \quad \text{subject to } x \in \Omega, \ G(x) \cap (z - Z_+) \neq \varnothing, \quad (P'(z)), \ z \in U
\]
where $Z_+ \subset Z$ is a non-trivial ordering cone, is a standard formulation commonly found in the literature; for instance, in our previous work \cite{GC-Melguizo2022} we studied a Lagrange duality theory for this type of problem under the assumption of convexity. In the present work, however, convexity is not required. 

Moreover, \((P'(z))\) can be expressed in the form of $(P(z))$ by replacing $G$ with the set-valued map $G + Z_+$, defined by $(G+Z_+)(x) := G(x) + Z_+$. Indeed, $G(x) \cap (z - Z_+) \neq \varnothing$ if and only if $z \in G(x) + Z_+$. 

Finally, while $(P'(z))$ relies explicitly on the ordering cone $Z_+$ of $Z$, the formulation $(P(z))$ does not require $Z$ to be an ordered space. 
This provides a conceptual advantage, as our framework can handle set-valued constraints in general normed spaces, not necessarily endowed with an ordering.
\end{remark}

We now particularize the above notions of nondominated and minimal points to the 
context of the vector optimization problem $(P(z))$. In this setting, the relevant set is 
the value set $V(z)$, and we introduce the following terminology.

\begin{definition} \label{InfumablePoint}
Let $z \in U\subset Z$ and consider the corresponding program $(P(z))$. 
A point $y_z \in Y$ is said to be 
\begin{itemize}
    \item[(i)] a nondominated point of $(P(z))$, written $y_z \in \mathrm{ND}(P(z))$, if $y_z$ is nondominated by the set $V(z)$;
    \item[(ii)] a minimal point of $(P(z))$, written $y_z \in \mathrm{Min}(P(z))$, if it is nondominated and belongs to $V(z)$.
\end{itemize}
Unlike  minimal points, nondominated points are not necessarily reached at a feasible solution.
\end{definition}

The key step in our approach is to extend the notion of Lagrange multipliers from the classical linear setting to the framework of closed convex processes. 
This allows us to capture duality properties in the nonconvex context considered here. 
The following definition formalizes what we mean by a Lagrange multiplier in our setting.

\begin{definition} \label{LMs}
Let $y_0\in \mathrm{ND}(P(0_Z))$. A proper closed convex process $\Delta: Z\rightrightarrows Y$ is said to be a Lagrange multiplier of $\mathrm{ND}(P(0_Z))$ at $y_0$, if $y_0$ is a nondominated point of the program
\[
\text{  Min \,} F(x)+\Delta(G(x)) \text{ \  such that \ } x\in \Omega.\text{ \ \ \ \ \ \ } (P[\Delta])
\]
\end{definition}

The following assumption plays a central role in our framework by providing the structural conditions under which our Lagrangian theory can be developed. 

\begin{asumption} \label{hipotesisGeneral}
Given the family of programs $(P(z))$ with $z \in U\subset Z$, and $y_0 \in \mathrm{ND}(P(0_Z))$, assume that there exists a process $\Delta \in P(Z,Y)$ such that 
\begin{itemize}
\item[(a)] $\operatorname{core}(\operatorname{Gph}(\Delta))\neq \varnothing$.
\item[(b)] $Y_+ \subseteq \Delta (0_Z)$ and $(-Y_+) \cap \Delta (0_Z)\subseteq Y_+$.
\item[(c)] $\operatorname{Gph}(-\Delta) \cap [\operatorname{Gph}(V)-(0,y_0)]\subseteq \{ 0_{Z \times  Y} \}$. 
\end{itemize}
\end{asumption}
Geometrically, condition (a) requires the graph of the process $\Delta$ to be sufficiently rich, in the sense that it possesses a nonempty algebraic interior. 
This prevents $\Delta$ from being too “thin’’ and ensures that it has enough directions to play the role of a multiplier. 
Condition (b) imposes a compatibility between $\Delta$ and the ordering cone $Y_+$: 
the image of $0_Z$ through $\Delta$ must contain all nonnegative directions, while its intersection with the opposite cone is restricted to directions already belonging to $Y_+$. Finally, condition (c) encodes a strict separation property between the  graph of $-\Delta$ and the graph of $V$ shifted by $(0,y_0)$. Indeed, the graph of $-\Delta$ lies entirely apart from the translation of the graph of $V$ by $(0, y_0)$, except possibly at the origin. In other words, no nontrivial direction of $V$ through $y_0$ can penetrate the graph of $-\Delta$. This geometric separation ensures that $-\Delta$ acts as a genuine supporting structure, delimiting the feasible region associated with $V$ at the point $y_0$ and preventing any mutual overlap beyond the trivial intersection at the origin. Together, these requirements provide the geometric backbone for the Lagrange framework developed below.

In order to streamline the presentation and to emphasize the central role played by processes $\Delta$ fulfilling Assumption~\ref{hipotesisGeneral}, 
it is convenient to collect them into a single set.

\begin{definition}\label{defi_Gamma_y_0} 
We denote by $\Gamma_{y_0}$ the set of all processes $\Delta\in P(Z,Y)$ that satisfy conditions \((a)\)–\((c)\) in Assumption \ref{hipotesisGeneral}. 
Equivalently, 
\[
\Gamma_{y_0} := \{\Delta \in P(Z,Y) \;|\; \Delta \text{ verifies (a)–(c) in Assumption \ref{hipotesisGeneral}}\}.
\]
\end{definition}

The next result establishes the fundamental role of the set $\Gamma_{y_0}$ in our Lagrange duality framework. 
It provides a concrete link between the abstract conditions of Assumption~\ref{hipotesisGeneral} 
and the dual optimization problem $(P[\Delta])$. 
In particular, this theorem guarantees that the minimality or nondominance of $y_0$ in the primal problem is preserved under the corresponding dual construction.

\begin{theorem} \label{ThLagMult11}
Let $y_0 \in \mathrm{ND}(P(0_Z))$ and assume that $\Gamma_{y_0}\not = \varnothing$. 
Then, every $\Delta \in \Gamma_{y_0}$ is a Lagrange multiplier of $(P(0_Z))$ at $y_0$; 
that is, $y_0$ is a nondominated point of the program
\begin{equation*}\label{DualPr_b'}
\text{Minimize } F(x) + \Delta(G(x)) \quad \text{subject to } x \in \Omega. 
\tag{$P[\Delta]$}
\end{equation*}

Furthermore, if $y_0$ is a minimal point of $(P(0_Z))$ (that is, if $y_0 \in F(x_0)$ 
for some feasible solution $x_0$), then $y_0$ is also a minimal point of $(P[\Delta])$ 
(achieved at $x_0$), and the compatibility condition
\begin{equation}\label{InterseccionTeo_b'}
\Delta(G(x_0)) \cap (-Y_+) \subseteq Y_+
\end{equation}
holds.
\end{theorem}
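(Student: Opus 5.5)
The plan is to prove everything directly from condition (c) of Assumption~\ref{hipotesisGeneral} together with the cone and convexity structure of $\operatorname{Gph}(\Delta)$. Condition (b) will be used only for the compatibility inclusion. Condition (a) should not be needed here; I expect it to matter only in the later existence results.

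Fix $\Delta\in\Gamma_{y_0}$. Denote by $W:=\bigcup_{x\in\Omega}\bigl(F(x)+\Delta(G(x))\bigr)$ the value set of $(P[\Delta])$. We must show $W\cap(y_0-Y_+)\subseteq y_0+Y_+$. Take $y\in W\cap(y_0-Y_+)$. Then there are $x\in\Omega$, $z\in G(x)$, $f\in F(x)$ and $d\in\Delta(z)$ with $y=f+d$ and $y_0-y=k\in Y_+$.

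\begin{itemize}
\item Since $G(x)\ni z$, we get $f\in V(z)$, provided $z\in U$.
\item The point $(z,f-y_0)$ lies in $\operatorname{Gph}(V)-(0,y_0)$.
\item We have $f-y_0=-d-k$. Because $k\in Y_+\subseteq\Delta(0_Z)$ by (b), and the graph is a convex cone and hence closed under addition, we get $(z,d+k)\in\operatorname{Gph}(\Delta)$.
\item Therefore $(z,f-y_0)=(z,-(d+k))\in\operatorname{Gph}(-\Delta)$.
\item By (c), $z=0_Z$ and $f=y_0$, so $d+k=0$, i.e.\ $d=-k\in\Delta(0_Z)\cap(-Y_+)$.
\item By the second half of (b), $d\in Y_+$. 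Hence $y-y_0=d\in Y_+$, as required.
\end{itemize}

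The main obstacle is the restriction $z\in U$. The value map $V$ is defined only on the neighborhood $U$, whereas $G(x)$ may contain points outside $U$. To handle this I would use the cone structure: replace $(z,d+k)$ by $t(z,d+k)$ for small $t>0$ so that $tz\in U$. However, $V$ is not conic, so the point $f$ does not scale along with it. My first attempt would be to interpret $\operatorname{Gph}(V)$ as the graph of $F\circ G^{-1}$ over all of $Z$, noting that the displayed formula for $\operatorname{Gph}(V)$ effectively takes this view. If that fails, I would add the standing convention that $G(\Omega)\subseteq U$, or simply take $U=Z$, and state this explicitly as the reading under which (c) is applied.

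For the second part, suppose $y_0\in F(x_0)$ with $0_Z\in G(x_0)$. Since $0_Y\in\Delta(0_Z)$, we get $y_0\in F(x_0)+\Delta(G(x_0))\subseteq W$. Combined with the first part, $y_0$ is a minimal point of $(P[\Delta])$, attained at $x_0$.

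For the compatibility condition~\eqref{InterseccionTeo_b'}, take $d\in\Delta(G(x_0))\cap(-Y_+)$. Then $y_0+d\in W\cap(y_0-Y_+)$. Nondominance gives $d\in Y_+$, which is exactly~\eqref{InterseccionTeo_b'}.
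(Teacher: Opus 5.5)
Your proof is correct, and it takes a genuinely different and more elementary route than the paper's. The paper reduces to $y_0=0_Y$ and then applies the Gerth-type separation result, Theorem~\ref{thm:separacion_Gerth}, to $K=-\operatorname{Gph}(-\Delta)$. This gives a $K$-monotone sublinear $p$ with $p>0$ on $\operatorname{Gph}(V)\setminus\{0_{Z\times Y}\}$ and $p\le 0$ on $\operatorname{Gph}(-\Delta)$. The inequality $p(0_Z,y_1+y_2)\ge p(z,y_1)>0$ then shows $y_1+y_2\notin-\Delta(0_Z)\supseteq -Y_+$ whenever $(z,y_1)\neq 0_{Z\times Y}$.

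You notice that this monotonicity step is just additivity of the convex cone $\operatorname{Gph}(\Delta)$. Adding $(0_Z,k)$ to $(z,d)$ puts $(z,f-y_0)$ directly into $\operatorname{Gph}(-\Delta)\cap[\operatorname{Gph}(V)-(0,y_0)]$. Then (c) gives $z=0_Z$ and $f=y_0$, and the second half of (b) finishes. Your route avoids the separation theorem and the translation to $y_0=0_Y$. It uses neither condition (a) nor closedness of $\operatorname{Gph}(\Delta)$, which the paper needs only to invoke Theorem~\ref{thm:separacion_Gerth}; in effect you show that (a) is superfluous for this theorem. The paper's route produces a scalarizing functional as a by-product, but the statement never uses it. Your treatment of the second part (attainment at $x_0$ and the compatibility inclusion) coincides with the paper's. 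One small correction to your opening plan: you use both halves of (b) in the main nondominance argument, whereas the compatibility inclusion follows from nondominance alone, not from (b).

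Your worry about $z\in U$ is well founded. The paper's proof takes the same step tacitly: it writes $y\in V(z)+\Delta(z)$ for an arbitrary $z\in G(x)$, $x\in\Omega$. You should state the convention as a standing hypothesis rather than a fallback: $G(\Omega)\subseteq U$, or $U=Z$, or (c) read with the graph of $F\circ G^{-1}$ on all of $Z$. Without it the statement fails. Take $Z=Y=\mathbb{R}$, $U=(-1,1)$ and $\Omega=\{x_t:|t|<1\}\cup\{b\}$, with $G(x_t)=\{t\}$, $F(x_t)=\{|t|\}$, $G(b)=\{5\}$, $F(b)=\{-100\}$. Then $V(z)=\{|z|\}$ on $U$ and $y_0=0$. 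The process with graph $\{(z,r):r\ge|z|/2\}$ lies in $\Gamma_{0}$, yet $-100+5/2\in F(b)+\Delta(G(b))$ lies in $-Y_+\setminus Y_+$. The displayed formula for $\operatorname{Gph}(V)$ does not rescue this, since it only collects $G(x)\times F(x)$ for $x$ with $G(x)\cap U\neq\varnothing$. Your scaling idea cannot work either, for the reason you give. With the convention made explicit, your proof is complete.
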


\begin{remark}
In the construction of the above theorem, the convex process $\Delta$ acts as a
separation device between feasible points and directions of improvement.
In this sense, $\Delta$ fulfills the same structural role as continuous linear
functionals in the classical scalar convex setting, encoding feasibility and
optimality through a separation principle; see, for example \cite[Section~8.4, Theorem 1]{Luenberger1969}.
\end{remark}

To prove Theorem~\ref{ThLagMult11}, we will rely on the following separation result, which combines both topological and algebraic features of convex cones in the setting of locally convex spaces. It provides the existence of a functional that strictly separates elements inside a cone from those outside, a tool that will be crucial in constructing the Lagrange multipliers.

\begin{theorem} \label{thm:separacion_Gerth}
Let $Y$ be a locally convex space, and let $K \subset Y$ be a closed convex cone with $\operatorname{core}(K) \neq \varnothing$. 
Then there exists a $K$-monotone sublinear functional $p: Y \to \mathbb{R}$ such that
\[
p(x) \leq 0 < p(y), \quad \text{for all } x \in -K \text{ and } y \notin -K.
\]
\end{theorem}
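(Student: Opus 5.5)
The plan is to construct $p$ explicitly as a Gerstewitz-type (Minkowski-type) scalarization functional along a direction taken from the core of $K$. Fix $e \in \operatorname{core}(K)$ and define
\[
p(y) := \inf\{\, t \in \mathbb{R} : y \in t e - K \,\}, \qquad y \in Y.
\]
The proof then has two parts. First, $p$ is finite-valued on all of $Y$. Second, $p$ has the stated properties.

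\textbf{Step 1: $p(y) < +\infty$.} Given $y \in Y$, the definition of the core applied to the direction $-y$ gives $\delta > 0$ with $e - s y \in K$ for all $s \in [0,\delta]$. Taking $s = \delta$ yields $y \in \tfrac{1}{\delta} e - K$, so $p(y) \le 1/\delta$.

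\textbf{Step 2: $p(y) > -\infty$.} This is where nontriviality and closedness of $K$ enter, and I expect it to be the main obstacle. Suppose $y \in t_n e - K$ with $t_n \to -\infty$. Dividing $y - t_n e \in -K$ by $|t_n|$ gives $y/|t_n| + e \in -K$. Letting $n\to\infty$ and using closedness, $e \in -K$, hence $-e \in K$.

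Now we derive a contradiction. Since $K$ is convex, $\operatorname{core}(K)$ is convex and satisfies $\operatorname{core}(K) + K \subset \operatorname{core}(K)$. Hence $0_Y = \tfrac12 e + \tfrac12(-e) \in \operatorname{core}(K)$. For a cone this forces $K = Y$, contradicting the standing nontriviality of cones.

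\textbf{Step 3: attainment of the infimum.} For $y \in Y$, take $t_n \downarrow p(y)$ with $y - t_n e \in -K$; the sets $te - K$ increase with $t$ because $e \in K$. Closedness of $K$ then gives $y - p(y) e \in -K$, i.e.\ $y \in p(y) e - K$.

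\textbf{Step 4: sublinearity.} Positive homogeneity follows because $-K$ is a cone: $y \in te - K$ iff $\lambda y \in \lambda t e - K$ for $\lambda>0$, and $p(0)=0$ by Step 2 together with $0 \in 0e-K$. Subadditivity follows from $K + K \subset K$: if $y_1 \in t_1 e - K$ and $y_2 \in t_2 e - K$, then $y_1+y_2 \in (t_1+t_2)e - K$.

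\textbf{Step 5: $K$-monotonicity.} If $y_1 - y_2 \in K$ and $y_1 \in te - K$, then $y_2 = y_1 - (y_1-y_2) \in te - K - K \subset te - K$. Hence $p(y_2) \le p(y_1)$.

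\textbf{Step 6: the separation inequalities.} If $x \in -K = 0e - K$, then $p(x) \le 0$. Conversely, suppose $y \notin -K$ but $p(y) \le 0$. By Step 3, $y \in p(y)e - K$. Since $p(y)e \in -K$ (because $e\in K$ and $p(y)\le 0$) and $-K-K\subset -K$, we get $y \in -K$, a contradiction. Thus $p(y) > 0$.

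Note that only closedness, convexity and a core point are used, so local convexity plays no essential role here, and no continuity of $p$ is claimed. The only delicate points are Steps 2 and 3, both of which rest on closedness of $K$ (and Step 2 also on $K \neq Y$).
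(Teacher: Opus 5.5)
Your argument is correct, but it reaches the statement by a different route than the paper. The paper does not build $p$ by hand. It passes to the core convex topology $\tau_c$ on $Y$, in which $K$ remains closed and $\operatorname{core}(K)=\operatorname{int}_{\tau_c}(K)$. It then applies Gerth and Weidner's nonconvex separation corollary to $A:=Y\setminus(-K)$, which yields a $\tau_c$-continuous sublinear functional that is strictly $\operatorname{int}_{\tau_c}(K)$-monotone and satisfies $p(x)\le 0<p(y)$. Finally, it uses a lemma from Khan--Tammer--Z\u{a}linescu to deduce $K$-monotonicity.

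You instead write down the Gerstewitz-type functional $p(y)=\inf\{t: y\in te-K\}$ with $e\in\operatorname{core}(K)$. You then verify directly its finiteness, attainment of the infimum, sublinearity, monotonicity and the two separation inequalities. This is essentially the functional behind the cited Gerth--Weidner result, so the mechanism is the same. Your version, however, is self-contained and bypasses $\tau_c$ altogether. It also shows that only convexity, one algebraic core point, and closedness of $K$ in a topological vector space are needed, so local convexity is indeed superfluous. And it makes visible exactly where closedness and $K\neq Y$ enter (Steps 2--3).

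The paper's route buys brevity and, by citation, strict $\operatorname{core}(K)$-monotonicity. Your functional has that property too: $c\in\operatorname{core}(K)$ gives $c-\delta e\in K$ for some $\delta>0$, hence $p(y-c)\le p(y)-\delta$. Your appeal to the paper's standing non-triviality convention in Step 2 is legitimate. The excluded case $K=Y$ would be trivial anyway, since $p\equiv 0$ works when no $y\notin -K$ exists.
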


\begin{proof}
Let us denote by $\tau$ the original locally convex topology on $Y$, and by $\tau_c$ the convex core topology on $Y$, that is, the topology generated by the family of all seminorms defined on $Y$. Then, $\tau_c$ is the strongest locally convex topology on $Y$, so the cone $K$ is $\tau_c$-closed, and the set $Y \setminus (-K)$ is $\tau_c$-open (we refer the reader to \cite[Section~6.3]{KTZ2015} for a comprehensive study of the topology $\tau_c$). Therefore, by \cite[Proposition~6.3.1(iii)]{KTZ2015}, it follows that $\operatorname{core}(K) = \operatorname{int}_{\tau_c}(K)$. Applying now \cite[Corollary~2.1]{Gerth90} to the set $A := Y \setminus (-K)$ and the cone $K$, we obtain the existence of a $\tau_c$-continuous sublinear functional $p : Y \to \mathbb{R}$ that is strictly $\operatorname{int}_{\tau_c}(K)$-monotone and satisfies
\[
p(x) \leq 0 < p(y), \quad \text{for all } x \in -K \text{ and } y \notin -K.
\]
Finally, applying \cite[Lemma~2.4]{KTZ2015}, we conclude that $p$ is $K$-monotone.
\end{proof}

\begin{proof}[Proof of Theorem \ref{ThLagMult11}]
Assume first that $y_0 = 0_Y$. Then, we suppose that $0_Y \in \mathrm{ND}(P(0_Z))$, which is equivalent to
\[
V(0_Z) \cap (-Y_+) \subset Y_+,
\]
and fix some $\Delta \in \Gamma_{y_0}$. We aim to show that
\begin{equation}\label{eq:inclusion_1_prueba_tma_general}
\left( \bigcup_{x \in \Omega} (F(x) + \Delta(G(x))) \right) \cap (-Y_+) \subset Y_+.    
\end{equation}

Let $x \in \Omega$ be arbitrary, with $z \in G(x)$ and $y_1 \in F(x)$, $y_2 \in \Delta(z)$. Define $y := y_1 + y_2$. Then, $y \in V(z) + \Delta(z)$. Observe that
\[
(z, y_2) \in \mathrm{Gph}(\Delta) \;\Leftrightarrow\; (z, -y_2) \in \mathrm{Gph}(-\Delta) \;\Leftrightarrow\; (-z, y_2) \in -\mathrm{Gph}(-\Delta).
\]
Set $K := -\mathrm{Gph}(-\Delta)$. By Assumption~\ref{hipotesisGeneral}, $-K = \mathrm{Gph}(-\Delta)$ is a closed convex cone with nonempty core, $\mathrm{Gph}(V) \setminus \{0_{Z \times Y}\} \subset (Z \times Y) \setminus \mathrm{Gph}(-\Delta)$, and the latter set is open. Then, Theorem~\ref{thm:separacion_Gerth} applied to $K$ ensures the existence of a $K$-monotone sublinear functional $p: Z \times Y \to \mathbb{R}$ such that:
\begin{equation}\label{desigualdad_sublineal}
p(z, y) > 0 \quad \text{for all } (z, y) \in \mathrm{Gph}(V) \setminus \{0_{Z\times Y}\},
\end{equation}
and
\begin{equation}\label{desigualdad1}
 p(z', y') \leq 0 \quad \text{for all } (z', y') \in \mathrm{Gph}(-\Delta).
\end{equation}

We now consider two cases:

\begin{itemize}
    \item[\textbf{Case 1:}] $(z, y_1) \neq 0_{Z \times Y}$. Since $(-z, y_2) \in K$ and $p$ is monotone on $K$, we get
    \[
    p(0_Z, y_1 + y_2) = p\big((z, y_1) + (-z, y_2)\big) \geq p(z, y_1) > 0,
    \]
    where the last inequality follows from \eqref{desigualdad_sublineal}. Thus, $(0_Z, y_1 + y_2) \notin \mathrm{Gph}(-\Delta)$, which implies $y = y_1 + y_2 \notin -\Delta(0_Z)$. By the first inclusion of Assumption~\ref{hipotesisGeneral} (b), this yields $y \notin -Y_+$, i.e., $$y \notin \left( \bigcup_{x \in \Omega} (F(x) + \Delta(G(x))) \right) \cap (-Y_+).$$
    
    \item[\textbf{Case 2:}] If $(z, y_1) = 0_{Z \times Y}$, then $z = 0_Z$ and $y_1 = 0_Y$, so $y = y_2 \in \Delta(0_Z)$. If 
\[
y \in \left( \bigcup_{x \in \Omega} (F(x) + \Delta(G(x))) \right) \cap (-Y_+),
\]
then the second inclusion of  Assumption~\ref{hipotesisGeneral} (b) implies that $y \in Y_+$. This completes the proof of~\eqref{eq:inclusion_1_prueba_tma_general}.
\end{itemize}

To prove the final part of the statement, assume there exists $x_0 \in \Omega$ such that $0_Y \in F(x_0)$ and $0_Z \in G(x_0)$. We claim that $0_Y \in F(x_0) + \Delta(G(x_0))$. Indeed, since $(0_Z, 0_Y) \in \mathrm{Gph}(\Delta)$, we have $0_Y \in \Delta(0_Z)$. Then, as $0_Z \in G(x_0)$, it follows that $0_Y \in \Delta(G(x_0))$, and thus $0_Y \in F(x_0) + \Delta(G(x_0))$.

Finally, let us prove inclusion~\eqref{InterseccionTeo_b'}. Take $u \in \Delta(G(x_0)) \cap (-Y_+)$. Since $0_Y$ is a minimal point of $(P[\Delta])$ achieved at $x_0$, we have that $0_Y \in F(x_0)$ and $\left( F(x_0) + \Delta(G(x_0)) \right) \cap (-Y_+) \subset Y_+$. Thus, clearly $u \in Y_+$.

\medskip

To finish, consider the case $y_0 \neq 0_Y$. Then, we have:
\[
y_0 \in \mathrm{ND}(P(0_Z)) \;\Leftrightarrow\; V(0_Z) \cap (y_0 - Y_+) \subset y_0 + Y_+ 
\;\Leftrightarrow\; (V(0_Z) - y_0) \cap (-Y_+) \subset Y_+
\]
\[
\Leftrightarrow \left( \bigcup_{\substack{x \in \Omega \\ 0_Z \in G(x)}} (F(x) - y_0) \right) \cap (-Y_+) \subset Y_+
\]
\[
\overset{(*)}{\Rightarrow} \left( \bigcup_{x \in \Omega} (F(x) - y_0 + \Delta(G(x))) \right) \cap (-Y_+) \subset Y_+
\]

\[
\Leftrightarrow \left( \bigcup_{x \in \Omega} (F(x) + \Delta(G(x))) \right) \cap (y_0 - Y_+) \subset y_0 + Y_+
\]
\[\Rightarrow y_0 \mbox{ is a nondominated point of }(P[\Delta]),\]
where implication $(*)$ follows from the proof of the case $y_0 = 0_Y$, by replacing $F(x)$ with $F(x) - y_0$.

For the final part of the statement, assume that $y_0 \in F(x_0)$ for some feasible solution $x_0$. Then, $0_Y \in F(x_0) - y_0$, and by the proof for the case $y_0 = 0_Y$, we conclude that $0_Y$ is a minimal point of the problem
\[
\text{Minimize } F(x) - y_0 + \Delta(G(x)) \text{ subject to } x \in \Omega,
\]
which implies that $y_0$ is a minimal point of $(P[\Delta])$. A straightforward adaptation of the proof of~\eqref{InterseccionTeo_b'} for the case $y_0 = 0_Y$ yields the conclusion for the general case.
\end{proof}

We present two examples. The first one, in finite dimension, illustrates the conclusion of Theorem~\ref{ThLagMult11}. The second example, set in the space $\ell^2$, shows that the conclusion of Theorem~\ref{ThLagMult11} may still hold even when condition~(a) of Assumption~\ref{hipotesisGeneral} fails. Consequently, this condition provides a sufficient, but not necessary, requirement for the existence of Lagrange multiplier processes within the framework developed in this paper.

\begin{example}\label{ex:R2_multiplier_compact}
Let $X=Y=\mathbb{R}^2$ and $Z=\mathbb{R}$, with ordering cone $Y_+=\mathbb{R}^2_+$.  
Consider the feasible set
\[
\Omega:=\{(x_1,x_2)\in\mathbb{R}^2 : x_1^2+x_2^2\le 1,\ x_2\ge 0\},
\]
and define the mappings
\[
F(x_1,x_2):=\{(x_1^2+x_2^2,\; x_2^2+x_1x_2)\}, 
\qquad 
G(x_1,x_2):=\{x_1\}.
\]

For $z=(0,0)$, one has
\[
G^{-1}(0)=\{(0,x_2):0\le x_2\le 1\}.
\]
The point $x_0=(0,0)$ satisfies $F(x_0)=\{0_Y\}$, while
$F(x)\subset Y_+\setminus\{0_Y\}$ for all $x\in G^{-1}(0)\setminus\{x_0\}$.
Hence $(0,0)$ is a minimal value of $(P(0))$.

Define the process $\Delta:Z\rightrightarrows Y$ by
\[
\Delta(z):=\{y\in\mathbb{R}^2:\ y_1\ge |z|,\ y_2\ge |z|\},\qquad z\in\mathbb{R}.
\]
Then $\Delta\in P(Z,Y)$ and $\operatorname{core}(\operatorname{Gph}(\Delta))\neq\varnothing$.
Moreover,
\[
F(x_0)+\Delta(G(x_0))=Y_+,
\]
while $(0,0)\notin F(x)+\Delta(G(x))$ for every $x\in\Omega\setminus\{x_0\}$.
Therefore, $x_0$ is also a minimal solution of the penalized problem $(P[\Delta])$.
\end{example}

\begin{example}\label{ex:ell2_BZ}

Let $X=Y=Z:=\ell^2$, ordered by the positive cone
\[
Y_+ := \{y\in\ell^2 : y_i\ge 0 \ \forall i\in\mathbb{N}\},
\]
for which $\operatorname{core}(Y_+)=\varnothing$.
Let
\[
\Omega := \{x\in\ell^2 : \|x\|_{\ell^2}\le 1\},
\]
and define
\[
F(x):=\{(x_i^2)_{i\in\mathbb{N}}\}+Y_+,
\qquad
G(x):=\{(x_i^3)_{i\in\mathbb{N}}\}.
\]

Fix $z=0_{\ell^2}$. Since
\[
G^{-1}(0_{\ell^2})=\{x\in\Omega : x_i^3=0 \ \forall i\}=\{0_{\ell^2}\},
\]
we obtain
\[
F\circ G^{-1}(0_{\ell^2})=F(0_{\ell^2})=Y_+,
\]
and hence $0_{\ell^2}$ is a minimal solution of the parametric problem $(P(0))$.

Define the multiplier mapping $\Delta:Z\rightrightarrows Y$ by
\[
\Delta(z):=\{y\in\ell^2 : y_i\ge \|z\|_{\ell^2} \ \forall i\in\mathbb{N}\}.
\]
Its graph
\[
\operatorname{Gph}(\Delta)
=
\{(z,y)\in\ell^2\times\ell^2 : y_i\ge \|z\|_{\ell^2} \ \forall i\}
\]
is a closed convex cone, since the norm on $\ell^2$ is continuous, convex and positively homogeneous.
Therefore, $\Delta$ is a convex process.

For any $x\in\Omega$, using $\Delta(z)\subset Y_+$ for all $z\in\ell^2$, we have
\[
F(x)+\Delta(G(x))
=
\{(x_i^2)_{i\in\mathbb{N}}\}+Y_+.
\]
In particular,
\[
F(0_{\ell^2})+\Delta(G(0_{\ell^2}))=Y_+.
\]
If $x\neq 0_{\ell^2}$, then $(x_i^2)_{i\in\mathbb{N}}\in Y_+\setminus\{0\}$, and hence
\[
(F(x)+\Delta(G(x)))\cap (-Y_+)\subset \{ 0_{\ell^2}\},
\]
showing that the penalized problem admits $0_{\ell^2}$ as a minimal solution.

Finally, since $\Delta(z)\subset Y_+$ for all $z\in\ell^2$, we have
\[
\operatorname{core}(\Delta(z))=\varnothing
\quad\text{and hence}\quad
\operatorname{core}(\operatorname{Gph}(\Delta))=\varnothing.
\]
Therefore, the conclusion of Theorem~\ref{ThLagMult11} remains valid in this infinite-dimensional setting, and exact penalization holds despite the failure of the interiority condition.
\end{example}

In the remainder of this section, we focus on the particular case \( Y = \mathbb{R} \) and \( Y_+ = \mathbb{R}_+ \). In this setting, the structure of the real line enables a reformulation of Assumption~\ref{hipotesisGeneral} in terms of sublinear functions, yielding a considerably simpler expression. 
The results obtained here naturally complement the classical global optimization theorems for convex functionals, such as those in \cite[Chapters~7, 8]{Luenberger1969}. 

Next, we show that every element of \( \Gamma_{y_0} \subset P(Z,\mathbb{R}) \)—relevant to our Lagrangian framework—can be identified with a lower semicontinuous sublinear function, providing a more transparent dual representation. 
In this context, the three conditions in Assumption~\ref{hipotesisGeneral} reduce to a single one—analogous to condition~(c)—expressed in terms of these sublinear functions. 
The corresponding family is denoted by \( \mathcal{S}_{r_0} \) and defined below.

\begin{definition}\label{defi_Gamma_r_0_real} 
Assume that \( Y = \mathbb{R} \) and \( Y_+ = \mathbb{R}_+ \), and let \( r_0 \in \mathbb{R} \). 
We denote by \( \mathcal{S}_{r_0} \) the set of all lower semicontinuous sublinear functions 
\( \varphi : Z \to \mathbb{R} \) that satisfy, for every \( z \in Z \setminus \{0_Z\} \),
\begin{equation} \label{condicion_caso_real}
    -\varphi(z) < r - r_0 \quad \text{for all } r \in F(G^{-1}(z)).
\end{equation}
Equivalently,
\[
\mathcal{S}_{r_0} := 
\Bigl\{\, 
\varphi : Z \to \mathbb{R} \;\Big|\;
\begin{aligned}[t]
& \varphi \text{ is sublinear and lower semicontinuous},\\
& \operatorname{dom}(\varphi) = Z,\\
& \text{and } \varphi \text{ satisfies } \eqref{condicion_caso_real}
\text{ for all } z \in Z \setminus \{0_Z\}
\Bigr\}.
\end{aligned}
\]
\end{definition}

We now establish a one-to-one correspondence between the elements of $\mathcal{S}_{r_0}$  and $\Gamma_{r_0}$, showing that the processes satisfying Assumption~\ref{hipotesisGeneral} can be equivalently described in terms of sublinear functions.

\begin{proposition}\label{proposition:biyeccion_Gamma_Gamma'}
Assume that $Y = \mathbb{R}$ and $Y_+ = \mathbb{R}_+$. Consider the family of programs $(P(z))$ with $z \in U \subset Z$, and let $r_0 \in \mathrm{ND}(P(0_Z))$. Let us consider the mapping
\[
\Upsilon : \mathcal{S}_{r_0} \longrightarrow \Gamma_{r_0},
\]
which assigns to each sublinear function $\varphi \in \mathcal{S}_{r_0}$ the process $\Upsilon(\varphi)\in P(Z,\mathbb{R})$ defined by
\begin{equation}\label{eq_defi_Upsilon}
\operatorname{Gph}(\Upsilon(\varphi)) := \operatorname{epi}(\varphi).
\end{equation}
Then the following statements hold.
\begin{itemize}
    \item[(i)] The mapping $\Upsilon$ is well defined; that is, $\Upsilon(\varphi) \in \Gamma_{r_0}$ for every $\varphi \in \mathcal{S}_{r_0}$.
    \item[(ii)] The mapping $\Upsilon$ is bijective.
\item[(iii)] If {\small $\|\Upsilon(\varphi)\| < +\infty$} (for instance, when $Z$ is a Banach space), then \\ $\operatorname{int}(\operatorname{Gph}(\Upsilon(\varphi))) \neq \varnothing$.
\end{itemize}
\end{proposition}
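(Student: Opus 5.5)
The plan is to check each item directly, using that for $Y=\mathbb{R}$ a proper closed convex process is entirely described by the lower boundary of its values.

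For (i), fix $\varphi\in\mathcal{S}_{r_0}$ and put $\Delta:=\Upsilon(\varphi)$, so that $\Delta(z)=[\varphi(z),+\infty)$.
\begin{itemize}
\item Since $\varphi$ is sublinear, $\operatorname{epi}(\varphi)$ is a convex cone; since $\varphi$ is lower semicontinuous, it is closed. Since $\operatorname{dom}(\varphi)=Z$, the process $\Delta$ is proper, so $\Delta\in P(Z,\mathbb{R})$.
\item For condition (a) of Assumption~\ref{hipotesisGeneral}, I will show $(0_Z,1)\in\operatorname{core}(\operatorname{epi}\varphi)$. Given a direction $(w,s)$, we need $1+\delta s\ge \varphi(\delta w)=\delta\varphi(w)$ for all small $\delta>0$. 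This holds because $\varphi(w)$ is a real number.
\item For (b), sublinearity gives $\varphi(0_Z)=0$, hence $\Delta(0_Z)=\mathbb{R}_+$. Both inclusions of (b) are then immediate.
\item For (c), note that $\operatorname{Gph}(-\Delta)=\{(z,t): t\le-\varphi(z)\}$. Take $(z,r-r_0)$ with $r\in V(z)$ lying in this set.
\begin{itemize}
\item If $z\neq 0_Z$, then $r-r_0\le-\varphi(z)$, which contradicts \eqref{condicion_caso_real}.
\item If $z=0_Z$, then $r\le r_0$ with $r\in V(0_Z)$. Since $r_0\in\mathrm{ND}(P(0_Z))$, this forces $r=r_0$, so the point is the origin.
\end{itemize}
\end{itemize}
One caveat concerns the domains: $\operatorname{Gph}(V)$ only involves $z\in U$, while \eqref{condicion_caso_real} is stated on all of $Z\setminus\{0_Z\}$. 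In (i) this is harmless, but I will keep it in mind for surjectivity.

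For (ii), injectivity is clear, because the epigraph determines $\varphi$. For surjectivity, take $\Delta\in\Gamma_{r_0}$ and define $\varphi(z):=\inf\Delta(z)$.
\begin{itemize}
\item First, $\Delta(0_Z)$ is a convex cone in $\mathbb{R}$ that contains $\mathbb{R}_+$ and contains no negative number, by (b). Hence $\Delta(0_Z)=\mathbb{R}_+$.
\item Convexity of the graph then gives $\Delta(z)+\mathbb{R}_+\subseteq\Delta(z)$, so each $\Delta(z)$ is upward closed.
\item The main technical step is showing $\varphi(z)>-\infty$. If $(z,-n)\in\operatorname{Gph}(\Delta)$ for every $n$, pick $a\in\Delta(-z)$, which exists because $\Delta$ is proper. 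Adding gives $(0_Z,a-n)\in\operatorname{Gph}(\Delta)$, i.e. $a-n\in\mathbb{R}_+$ for all $n$, which is absurd.
\item Thus $\varphi$ is real valued. Closedness of the graph gives $\Delta(z)=[\varphi(z),\infty)$, so $\operatorname{Gph}(\Delta)=\operatorname{epi}\varphi$.
\item Because $\operatorname{epi}\varphi$ is a closed convex cone, $\varphi$ is sublinear and lower semicontinuous.
\item Finally, if some $z\neq0_Z$ and $r\in F(G^{-1}(z))$ satisfied $-\varphi(z)\ge r-r_0$, then $(z,r-r_0)$ would be a nonzero point of $\operatorname{Gph}(-\Delta)\cap[\operatorname{Gph}(V)-(0,r_0)]$. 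This contradicts (c), so \eqref{condicion_caso_real} holds.
\end{itemize}
In that last step I read $\operatorname{Gph}(V)$ as the graph of $F\circ G^{-1}$, consistent with Definition~\ref{defi_Gamma_r_0_real}. This domain bookkeeping is the point I expect to need most care.

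For (iii), use \eqref{def:norma_proceso} together with $d(0,[\varphi(z),\infty))=\max\{\varphi(z),0\}$. If $\|\Upsilon(\varphi)\|<\infty$, there is $M$ with $\varphi\le M$ on $B_Z$. Then the open set $B_Z^\circ\times(M,+\infty)$ lies in $\operatorname{epi}\varphi$, so the interior is nonempty.

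In the Banach case, the sets $\{\varphi\le n\}\cap B_Z$ are closed, since $\varphi$ is lower semicontinuous, and they cover the complete metric space $B_Z$, since $\varphi$ is real valued. By Baire, one of them has interior in $B_Z$, so $\varphi\le n$ on some ball $B(z_0,\rho)$. Sublinearity then gives, for $\|u\|\le\rho$,
\[
\varphi(u)\le\varphi(z_0+u)+\varphi(-z_0)\le n+\varphi(-z_0),
\]
so $\varphi$ is bounded above near $0_Z$ and hence on $B_Z$ by homogeneity. Therefore the norm is finite.
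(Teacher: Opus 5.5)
Your proof is correct. It has the same skeleton as the paper's: you verify (a)--(c) for $\operatorname{epi}\varphi$, recover $\varphi$ from $\Delta$ via $\Delta(0_Z)=\mathbb{R}_+$ and condition (c), and bound $\varphi$ on $B_Z$ for (iii). The difference is that you make it self-contained where the paper leans on citations.

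\begin{itemize}
\item The paper gets positivity and the epigraph representation from Abreu's Prop.~1.3 together with Ekeland--Temam's Prop.~2.3. You instead get $\Delta(0_Z)=\mathbb{R}_+$ in (i) directly from $\varphi(0_Z)=0$. In (ii) you prove $\inf\Delta(z)>-\infty$ by adding a point of $\Delta(-z)$, which shows that properness of $\Delta$ is exactly what makes $\varphi$ real-valued.
\item For (a), the paper shows $(0_Z,2)\in\operatorname{core}$ via the decomposition $(t\bar z,s)=(t\bar z,tr_{\bar z})+(0_Z,s-tr_{\bar z})$. Your one-line inequality $\delta(\varphi(w)-s)\le 1$ does the same job.
\item For (iii), the paper rebuilds a uniform $\varepsilon$ around $(0_Z,2)$. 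Your observation $B_Z^\circ\times(M,+\infty)\subset\operatorname{epi}\varphi$ is more direct.
\item In the Banach case, the paper invokes the closed graph theorem for closed convex processes of Aubin--Frankowska, which holds for general $Y$. Your Baire argument is the classical proof that a finite lsc sublinear function on a Banach space is bounded on $B_Z$. Applying Baire on $Z$ itself avoids the relative-interior step inside $B_Z$.
\end{itemize}
The paper's route is shorter and sits within the convex-process literature; yours is elementary and isolates where properness and completeness enter.

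Your domain caveat is well founded, and the paper passes over it silently: in (ii) it writes ``$(z,r)\in\operatorname{Gph}(V)$'' for arbitrary $z\in Z\setminus\{0_Z\}$. If $V$ is read literally as a map on $U$, condition (c) controls only $z\in U$, and surjectivity can actually fail. Take $X=Z=\Omega=\mathbb{R}$, $U=(-1,1)$, $G(x)=\{x\}$, $F(x)=\{-100x^2\}$, $r_0=0$ and $\varphi=101|\cdot|$. Then $\Upsilon(\varphi)\in\Gamma_0$, because $-100z^2\le-101|z|$ forces $z=0$ when $|z|<1$. Yet \eqref{condicion_caso_real} fails at $z=2$, since $-\varphi(2)=-202$ is not smaller than $-400$. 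So the bijection needs $\operatorname{Gph}(V)$ read as the graph of $F\circ G^{-1}$ on all of $Z$, which is the reading you adopt. Alternatively, the defining condition of $\mathcal{S}_{r_0}$ could be restricted to $z\in U\setminus\{0_Z\}$.
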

\begin{proof}
(i) Let \( \varphi \in \mathcal{S}_{r_0} \). We will prove that the process \( \Upsilon(\varphi) \) belongs to \( \Gamma_{r_0} \) by verifying that it satisfies conditions (a)--(c) in Assumption~\ref{hipotesisGeneral}. As \( \operatorname{dom}(\varphi) = Z \), it is clear that \( \operatorname{dom}(\Upsilon(\varphi)) = Z \). On the one hand, \cite[Prop.~2.3]{EkelandTemam1976} states that a sublinear functional \( \varphi : Z \to \mathbb{R} \) is lower semicontinuous if and only if the corresponding process \( \Upsilon(\varphi) \) has a closed graph. Recall that a process \( \Delta \) is said to be closed-valued when $\Delta(x)$ is closed for every $x \in \operatorname{dom} \Delta$.  Note that a process with a closed graph is necessarily closed-valued, since each fiber $\{x\} \times \Delta(x)$ is the intersection of the closed graph of $\Delta$ with the closed set $\{x\} \times \mathbb{R}$. On the other hand, as $\varphi$ is sublinear, it is convex, and so $\operatorname{epi}(\varphi)$ is convex \cite[Prop.~2.1]{EkelandTemam1976}. As a consequence, $\Upsilon(\varphi)$ is a real, proper, convex process with closed values. Furthermore, \cite[Prop.~1.3]{Abreu89} establishes that every real proper convex process $\Delta: Z \rightrightarrows \mathbb{R}$ with closed values is positive, i.e., $\Upsilon(\varphi)(0_Z)=\mathbb{R}_+$,
if and only if there exists a sublinear mapping $\varphi: Z \to \mathbb{R}$ such that \(\operatorname{Gph}(\Delta) = \operatorname{epi}(\varphi)\). Therefore, $\Upsilon(\varphi)\in P(Z,\mathbb{R})$ and $\Upsilon(\varphi)(0_Z)=\mathbb{R}_+$. Next, we verify that $\Upsilon(\varphi)$ satisfies conditions (a)--(c) of Assumption \ref{hipotesisGeneral}. Condition (a) is $\operatorname{core}(\operatorname{Gph}(\Upsilon(\varphi)))\not = \varnothing$, and we will check that $(0_Z,2) \in \operatorname{core}(\operatorname{Gph}(\Upsilon(\varphi)))$ showing that for every $(\bar{z},\bar{r}) \in Z\times \mathbb{R}$, there exists $\varepsilon>0$ such that  
\[
(0_Z,2)+\zeta (\bar{z},\bar{r}) \in  \operatorname{Gph}(\Upsilon(\varphi)), 
\]
for every $0\le\zeta\le \varepsilon$. To this end, fix an arbitrary $(\bar{z},\bar{r}) \in Z\times \mathbb{R}$. Choose $r_{\bar{z}}\ge \max\{\varphi(\bar{z}),1\}$ and set $\tau_{\bar{z}}:=1/r_{\bar{z}}\in (0,1]$. Clearly, $(\bar{z},r_{\bar{z}})\in \operatorname{Gph}(\Upsilon(\varphi))=\operatorname{epi}(\varphi)$. We claim that 
\begin{equation}\label{eq_core_Upsilon}
 (t\bar{z},s)\in \operatorname{Gph}(\Upsilon(\varphi)), \quad \text{for every } 1\le s\le 3,\; 0<t\le \tau_{\bar{z}}.   
\end{equation}
Indeed, given any $0<t\le \tau_{\bar{z}}$, since $\operatorname{Gph}(\Upsilon(\varphi))$ is a cone, we have $(t\bar{z},tr_{\bar{z}})\in \operatorname{Gph}(\Upsilon(\varphi))$. Moreover, because $\Upsilon(\varphi)$ is a positive process and $s-tr_{\bar{z}}\ge 0$, it follows that $(0_Z,s-tr_{\bar{z}})\in \operatorname{Gph}(\Upsilon(\varphi))$. By convexity, we then obtain
\[
(t\bar{z},s)
  =(t\bar{z},tr_{\bar{z}})+(0_Z,s-tr_{\bar{z}})
  \in \operatorname{Gph}(\Upsilon(\varphi))+\operatorname{Gph}(\Upsilon(\varphi))
  \subset \operatorname{Gph}(\Upsilon(\varphi)),
\]
for every $1\le s\le 3$, which proves \eqref{eq_core_Upsilon}.

Now choose $0<\varepsilon\le \tau_{\bar{z}}$ small enough so that $-1\le \zeta \bar{r} \le 1$ for all $0\le \zeta \le \varepsilon$. Hence $1\le 2+\zeta\bar{r}\le 3$ for all such $\zeta$. Consequently,
\[
(0_Z,2)+\zeta(\bar{z},\bar{r})=(\zeta \bar{z},2+\zeta \bar{r})\in \operatorname{Gph}(\Upsilon(\varphi)), \quad \text{for every } 0\le \zeta \le \varepsilon.
\]
Hence, condition~(a) in Assumption~\ref{hipotesisGeneral} holds. We next verify condition~(b). Since $\Upsilon(\varphi)(0_Z) = \mathbb{R}_+$, we have
\[
Y_+ = \mathbb{R}_+ = \Upsilon(\varphi)(0_Z),
\qquad
(-Y_+) \cap \Upsilon(\varphi)(0_Z) = \{0\} \subset \mathbb{R}_+,
\]
and therefore condition~(b) is also satisfied.

Finally, we check condition~(c). Suppose, by contradiction, that there exists
\[
(\bar{z},\bar{r}) \in \operatorname{Gph}(-\Upsilon(\varphi)) 
\cap [\operatorname{Gph}(V) - (0_Z, r_0)],
\qquad (\bar{z},\bar{r}) \ne (0_Z,0).
\]
Then there exist points $z' \in Z$ and $r' \in F(G^{-1}(z'))$ such that 
\[
(\bar{z},\bar{r}) = (z', r' - r_0) \in 
\operatorname{Gph}(-\Upsilon(\varphi))
= \{(z,r) \in Z \times \mathbb{R} : -\varphi(z) \ge r\}.
\]
Hence $-\varphi(z') \ge r' - r_0$, which contradicts \eqref{condicion_caso_real} unless $z' = 0_Z$. 
If $z' = 0_Z$, then $r' \in V(0_Z)$, and since $\varphi(0_Z)=0$, we obtain  $r' \ge r_0$. 
However, as $r_0 \in \mathrm{ND}(P(0_Z))$, it follows that $r_0 \le r'$, 
and thus $r' = r_0$, which implies $(\bar{z},\bar{r}) = (0_Z,0)$, a contradiction. 
Therefore, no such $(\bar{z},\bar{r})$ exists, and condition~(c) in Assumption~\ref{hipotesisGeneral} is verified. 

Therefore, we conclude that $\Upsilon(\varphi)\in \Gamma_{r_0}$; hence, $\Upsilon$ is well defined.

\medskip

(ii) It is clear that $\Upsilon$ is injective because, for any $\varphi_1, \varphi_2 \in \mathcal{S}_{r_0}$,
\[
\varphi_1 = \varphi_2 \quad \Longleftrightarrow \quad \operatorname{epi}(\varphi_1) = \operatorname{epi}(\varphi_2).
\]

Next, we show that $\Upsilon$ is surjective.  For that purpose, we assume that $\Delta \in \Gamma_{r_0}$, i.e., $\Delta \in P(Z,\mathbb{R})$ satisfies Assumption~\ref{hipotesisGeneral} with $r_0$ playing the role of $y_0$. 
We first claim that $\Delta$ is positive, i.e., $\Delta(0_Z)=\mathbb{R}_+$. Indeed, by condition~(b) in Assumption~\ref{hipotesisGeneral} we have $\mathbb{R}_+ \subset \Delta(0_Z)$ and the last set is a cone. Hence, either $\Delta(0_Z)=\mathbb{R}_+$ or $\Delta(0_Z)=\mathbb{R}$. 
The latter is impossible because condition~(b) guarantees that
\[
(-\mathbb{R}_+)\cap\Delta(0_Z)\subset\mathbb{R}_+.
\]
Indeed, if $\Delta(0_Z)=\mathbb{R}$, then $(-\mathbb{R}_+)\cap\Delta(0_Z)= -\mathbb{R}_+$, and the inclusion above would give $-\mathbb{R}_+\subset\mathbb{R}_+$, which is false. 
Therefore $\Delta(0_Z)=\mathbb{R}_+$. Now, again by \cite[Prop.~1.3]{Abreu89} and \cite[Prop.~2.3]{EkelandTemam1976}, there exists a lower semicontinuous sublinear mapping $\varphi: Z \to \mathbb{R}$ such that \(\operatorname{Gph}(\Delta) = \operatorname{epi}(\varphi)\). We will verify that $\varphi\in \mathcal{S}_{r_0}$ by showing that $\varphi$ satisfies \eqref{condicion_caso_real}. Fix $z\in Z\setminus\{0_Z\}$ and an arbitrary $r\in F(G^{-1}(z))$. 
Recalling $V:=F\circ G^{-1}$, we have $(z,r)\in\operatorname{Gph}(V)$. 
By condition~(c) in Assumption~\ref{hipotesisGeneral} it follows that
\[
(z,\,r-r_0)\not\in\operatorname{Gph}(-\Delta).
\]

Since $\operatorname{Gph}(\Delta)=\operatorname{epi}(\varphi)=\{(z,t)\in Z\times\mathbb{R} : \varphi(z)\le t\}$, we have
\[
\operatorname{Gph}(-\Delta)=\{(z,s)\in Z\times\mathbb{R} : (z,-s)\in\operatorname{Gph}(\Delta)\}
=
\]
\[
\{(z,s)\in Z\times\mathbb{R} : \varphi(z)\le -s\}=\{(z,s)\in Z\times\mathbb{R} : -\varphi(z)\ge s\}.
\]
Therefore, $(z,r-r_0)\not\in\operatorname{Gph}(-\Delta)$ is equivalent to
\[
-\varphi(z) < r-r_0,
\]
which is exactly \eqref{condicion_caso_real}. 

\medskip

(iii) Assume that $\|\Upsilon(\varphi)\|< +\infty$. We will show that $(0_Z,2)\in \operatorname{int}(\operatorname{Gph}(\Upsilon(\varphi)))$ by proving that there exists $\varepsilon>0$ such that  
\[
(0_Z,2)+\zeta (\bar{z},\bar{r}) \in  \operatorname{Gph}(\Upsilon(\varphi)), 
\]
for every $0\le \zeta\le \varepsilon$ and  $(\bar{z},\bar{r})\in B_Z\times [-1,1]$.

We first claim that 
\begin{equation}\label{eq:norma_Upsilon}
\|\Upsilon(\varphi)\|=\sup_{z\in S_Z} \varphi(z).     
\end{equation}
Indeed, by \eqref{def:norma_proceso} we have 
\[
\|\Upsilon(\varphi)\|=\sup_{z\in B_Z}\inf \{|y|\colon y\in \Upsilon(\varphi)(z)\}.
\]
Since $\operatorname{Gph}(\Upsilon(\varphi))=\operatorname{epi}(\varphi)$, it follows that 
\[
\|\Upsilon(\varphi)\|=\sup_{z\in B_Z}\inf\{ |y|\colon \varphi(z)\le y\}
   =\sup_{z\in B_Z}\{\max\{\varphi(z),0\}\}
   =\sup_{z\in B_Z}\varphi(z).
\]
To justify the last equality, observe that if $\varphi(z)<0$, then by the sublinearity of $\varphi$ we have $0<-\varphi(z)\le \varphi(-z)$, with $-z\in B_Z$. This proves the claim.

Now, define 
\begin{equation}  
r := \max \{\|\Upsilon(\varphi)\|,1\}, 
\qquad 
\varepsilon:=\frac{1}{r}\in (0,1].
\end{equation}
Fix an arbitrary $(\bar{z},\bar{r}) \in B_Z\times [-1,1]$. Then, by \eqref{eq:norma_Upsilon}, we have $r\ge \varphi(\bar{z})$, which implies $(\bar{z},r)\in \operatorname{Gph}(\Upsilon(\varphi))$. Repeating the argument used in part (ii) to obtain formula \eqref{eq_core_Upsilon}, we deduce that $(t\bar{z},s)\in \operatorname{Gph}(\Upsilon(\varphi))$ for every $1\le s\le 3$ and $0<t\le \varepsilon$. Moreover, since $-1\le \zeta\bar{r}\le 1$ for all $0\le \zeta\le \varepsilon$, repeating the final argument of (ii) yields
\[
(0_Z,2)+\zeta(\bar{z},\bar{r})=(\zeta\bar{z},2+\zeta\bar{r})\in \operatorname{Gph}(\Upsilon(\varphi)), 
\quad \text{for all } 0\le \zeta\le \varepsilon.
\]
This completes the proof, since $\varepsilon$ does not depend on the particular choice of $(\bar{z},\bar{r})\in B_Z\times [-1,1]$.

Finally, note that, by \cite[Theorem~2.2.6]{aubin1990set}, if $Z$ is a Banach space, then $\|\Upsilon(\varphi)\|<+\infty$.
 \end{proof}

\begin{remark}\label{rem:elementos de Gamma positivos}
Note that in the previous proof, we have shown that $\Delta(0_Z) = \mathbb{R}_+$ for every $\Delta \in \Gamma_{r_0}$.
\end{remark}

\begin{theorem} \label{ThLagMult11_real}
Assume that $Y = \mathbb{R}$ and $Y_+ = \mathbb{R}_+$. Let
\[
r_0 := \inf \{ F(x) : x \in \Omega, \;  0_Z \in G(x) \},
\]
assume that $r_0\in \mathbb{R}$ and that $\mathcal{S}_{r_0}\not = \varnothing$. Then, every $\varphi\in \mathcal{S}_{r_0}$ is a Lagrange multiplier of $(P(0_Z))$ at $r_0$; that is,  
$r_0$ is the infimum of the program
\begin{equation*}\label{DualPr_r}
\text{Minimize } F(x) + \varphi(G(x)) \quad \text{subject to } x \in \Omega. \tag{$P[\varphi]$}
\end{equation*}
Furthermore, if $r_0$ is a minimum of $(P(0_Z))$ (that is, if $r_0 \in F(x_0)$ for some feasible solution $x_0$), then $r_0$ is also a minimal point of $(P[\varphi])$ (achieved at $x_0$), and
\begin{equation}\label{InterseccionTeo_real}
\varphi(z) \geq 0  \quad \text{for every } z \in G(x_0).
\end{equation}
\end{theorem}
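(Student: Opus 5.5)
The plan is to reduce the statement to Theorem~\ref{ThLagMult11} through the bijection of Proposition~\ref{proposition:biyeccion_Gamma_Gamma'}, and then translate each conclusion back into the language of sublinear functions.

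\textbf{Step 1: the transfer.} Since $r_0$ is the infimum of $V(0_Z)=F(G^{-1}(0_Z))$, we have $V(0_Z)\cap(r_0-\mathbb{R}_+)\subset\{r_0\}\subset r_0+\mathbb{R}_+$, so $r_0\in\mathrm{ND}(P(0_Z))$. Fix $\varphi\in\mathcal{S}_{r_0}$. By Proposition~\ref{proposition:biyeccion_Gamma_Gamma'}(i), $\Delta:=\Upsilon(\varphi)\in\Gamma_{r_0}$, so $\Gamma_{r_0}\neq\varnothing$. Theorem~\ref{ThLagMult11} then says that $r_0$ is nondominated for $(P[\Delta])$. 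Since $\operatorname{Gph}(\Delta)=\operatorname{epi}(\varphi)$, we have $\Delta(z)=\varphi(z)+\mathbb{R}_+$, and hence
\[
F(x)+\Delta(G(x))=\bigcup_{z\in G(x)}\bigl(F(x)+\varphi(z)\bigr)+\mathbb{R}_+ .
\]
Nondominance in $\mathbb{R}$ means that no element of this set is strictly below $r_0$. Adding $\mathbb{R}_+$ does not change the infimum, so $F(x)+\varphi(G(x))\ge r_0$ for all $x\in\Omega$.

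\textbf{Step 2: equality of infima.} For the reverse inequality, take any feasible $x$ for $(P(0_Z))$. Then $0_Z\in G(x)$ and $\varphi(0_Z)=0$, so $F(x)=F(x)+\varphi(0_Z)\subset F(x)+\varphi(G(x))$. The infimum of $(P[\varphi])$ is therefore at most $r_0$. This step is straightforward, but I will be careful to interpret the infimum of $(P[\varphi])$ over the union of the sets $F(x)+\varphi(G(x))$.

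\textbf{Step 3: attainment and the sign condition.} Suppose $r_0\in F(x_0)$ with $0_Z\in G(x_0)$. Then $r_0=r_0+\varphi(0_Z)\in F(x_0)+\varphi(G(x_0))$, so the infimum is attained at $x_0$; equivalently, this follows from the minimality part of Theorem~\ref{ThLagMult11}. For \eqref{InterseccionTeo_real}, I would argue directly, since this is cleaner than the compatibility condition \eqref{InterseccionTeo_b'}. For $z\in G(x_0)$, the number $r_0+\varphi(z)$ lies in $F(x_0)+\varphi(G(x_0))$. By Step 1 it is at least $r_0$, which gives $\varphi(z)\ge0$.

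\textbf{Main obstacle.} The only delicate point is the bookkeeping between the process formulation and the functional one: the identity $\Delta(z)=[\varphi(z),\infty)$ and the fact that adding $\mathbb{R}_+$ preserves both infima and nondominance in $\mathbb{R}$. Both follow from Remark~\ref{rem:elementos de Gamma positivos} and the epigraph description.
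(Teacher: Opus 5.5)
Your proposal is correct and follows essentially the paper's route: you transfer $\varphi$ to $\Delta=\Upsilon(\varphi)\in\Gamma_{r_0}$, invoke Theorem~\ref{ThLagMult11}, and translate back using $\Delta(z)=\varphi(z)+\mathbb{R}_+$ and $\varphi(0_Z)=0$. The differences are cosmetic. You obtain the inclusion of the feasible values in $F(x)+\varphi(G(x))$ and the attainment $r_0=r_0+\varphi(0_Z)\in F(x_0)+\varphi(G(x_0))$ directly. You also derive $\varphi(z)\ge 0$ from the lower bound of Step~1 rather than from the compatibility inclusion \eqref{InterseccionTeo_b'}, which is if anything cleaner than the paper's closing argument for attainment.
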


\begin{proof}
Since $\varphi \in \mathcal{S}_{r_0} \neq \varnothing$, Proposition~\ref{proposition:biyeccion_Gamma_Gamma'} ensures that $\Upsilon(\varphi)\in \Gamma_{r_0} \neq \varnothing$.  
Let $\Delta := \Upsilon(\varphi)$, so that $\Delta(z) = \varphi(z) + \mathbb{R}_+$ and $\operatorname{Gph}(\Delta) = \operatorname{epi}(\varphi)$.  
By Theorem~\ref{ThLagMult11}, $r_0$ is a nondominated point of the program
\begin{equation*}\label{DualPr_b' real}
\text{Minimize } F(x) + \Delta(G(x)) \quad \text{subject to } x \in \Omega. \tag{$P[\Delta]$}
\end{equation*}

If, in addition, $r_0$ is a minimum of $(P(0_Z))$ (that is, $r_0 \in F(x_0)$ for some feasible $x_0$), Theorem~\ref{ThLagMult11} yields that $r_0$ is also a minimum of $(P[\Delta])$, attained at $x_0$, and
\begin{equation}\label{InterseccionTeo_b'_sub}
\Delta(G(x_0)) \cap (-\mathbb{R}_+) \subseteq \mathbb{R}_+.
\end{equation}
Since $Y = \mathbb{R}$ and $Y_+ = \mathbb{R}_+$, minimality and minimal coincide, and \eqref{InterseccionTeo_b'_sub} clearly implies $\Delta(G(x_0)) \subset \mathbb{R}_+$.  
As $\operatorname{Gph}(\varphi) \subset \operatorname{Gph}(\Delta)$, we deduce that \eqref{InterseccionTeo_real} holds.

We now prove that $r_0$ is the infimum of $(P[\varphi])$.  
Define the sets
\[
A := \bigcup_{\substack{x \in \Omega \\ 0_Z \in G(x)}} F(x), \quad  
B := \bigcup_{x \in \Omega} \big(F(x) + \Delta(G(x))\big), \quad  
C := \bigcup_{x \in \Omega} \big(F(x) + \varphi(G(x))\big).
\]
Then $r_0 = \inf A$, and we aim to show that $r_0 = \inf C$.  
Since $\operatorname{ND}(A)$ consists of the lower bounds of $A$, it follows that \(r_0 \in \operatorname{ND}(A)\), and by Theorem~\ref{ThLagMult11}, \(r_0 \in \operatorname{ND}(B)\).  
From Remark~\ref{rem:elementos de Gamma positivos}, \(0 \in \Delta(0_Z) = \mathbb{R}_+\), which implies \(A \subset B\), hence \(r_0 = \inf B\).  
Moreover, $\operatorname{Gph}(\varphi) \subset \operatorname{Gph}(\Delta)$ entails \(C \subset B\) and thus $\inf B \le \inf C$.  
Conversely, since $\Delta(z) = \varphi(z) + \mathbb{R}_+$, for each \(b \in B\) there exists \(c \in C\) with \(c \le b\), giving $\inf C \le \inf B$.  
Therefore, $\inf B = \inf C$, and hence \(r_0 = \inf C\).

Finally, if \(r_0 = \min A\), Theorem~\ref{ThLagMult11} implies \(r_0 = \min B\).  
By the previous argument, \(r_0 = \inf C\); moreover, \(r_0 \in C\), since otherwise there would exist \(c \in C\) with \(c < r_0\), contradicting \(r_0 = \min B\) because \(C \subset B\).
\end{proof}

Let us consider now the case $Y=\mathbb{R}$ and $Y_+=\mathbb{R}_+$ under the situation where both mappings \( F \) and \( G \) involved in the problems $(P(z))$ are single-valued functions. In this case, we have the family of parametric optimization problems:
\[
\text{Minimize } f(x) \quad \text{subject to } x \in \Omega,\ z = g(x), \quad (P(z)), \ z \in U,
\]
where $f:\Omega \rightarrow \mathbb{R}$ and $g:\Omega \rightarrow Z$ are conventional point-to-point mappings, and $U \subset Z$ denotes an open neighborhood of the origin $0_Z$. We define the set-valued function
\[
V: U \rightrightarrows \mathbb{R}, \quad V(z) := \{f(x) : x \in \Omega, \, g(x) = z\}.
\]

The next theorem, which adapts Theorem~\ref{ThLagMult11_real} to the single-valued setting, formalizes in a nonconvex framework the separation-based principle underlying the Lagrange multiplier method. 
In the terminology of \cite[Section~8.4, Theorem 1]{Luenberger1969}, the supporting hyperplane is here replaced by a lower semicontinuous sublinear functional, whose existence guarantees exact penalization of the constraint without any additional assumptions.

\begin{theorem}  \label{teorema_LM_caso_escalar_univalorado}
Assume that $Y = \mathbb{R}$ and $Y_+ = \mathbb{R}_+$. Let
\[
r_0 := \inf \{ f(x) : x \in \Omega, \; g(x) = 0_Z \}.
\]
Assume that $r_0\in \mathbb{R}$ and that there exists a lower semicontinuous, sublinear mapping $\varphi : Z \to \mathbb{R}$ that satisfies 
\[
-\varphi(g(x)) < f(x)-r_0 
\quad \text{for all } x \in \Omega \text{ with } g(x) \neq 0_Z.
\]
Then  
\[
r_0 = \inf \{ f(x) + \varphi(g(x)) : x \in \Omega \}. \tag{$P[\varphi]$}
\]

Moreover, if $r_0 = f(x_0)$ for some feasible $x_0 \in \Omega$ (that is, $g(x_0)=0_Z$), 
then $x_0$ also minimizes $(P[\varphi])$, and
\[
\varphi(g(x_0)) = 0.
\]
\end{theorem}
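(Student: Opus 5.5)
The plan is to obtain this result as a direct specialization of Theorem~\ref{ThLagMult11_real}. I take $F(x):=\{f(x)\}$ and $G(x):=\{g(x)\}$ as singleton-valued maps, so that $F(G^{-1}(z))=\{f(x): x\in\Omega,\ g(x)=z\}=V(z)$. The first step is to check that the given $\varphi$ belongs to $\mathcal{S}_{r_0}$. For $z\neq 0_Z$ and any $r\in F(G^{-1}(z))$, we have $r=f(x)$ for some $x\in\Omega$ with $g(x)=z\neq 0_Z$. The hypothesis then gives $-\varphi(z)=-\varphi(g(x))<f(x)-r_0=r-r_0$, which is exactly \eqref{condicion_caso_real}. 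Lower semicontinuity, sublinearity and $\operatorname{dom}(\varphi)=Z$ are assumed, so $\varphi\in\mathcal{S}_{r_0}\neq\varnothing$. Moreover, $r_0$ coincides with the $r_0$ of Theorem~\ref{ThLagMult11_real}.

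Next I invoke Theorem~\ref{ThLagMult11_real}. It states that $r_0$ is the infimum of $F(x)+\varphi(G(x))$ over $x\in\Omega$, and in the single-valued setting this set is $\{f(x)+\varphi(g(x)) : x\in\Omega\}$. This yields the first claim. For the second claim, assume $f(x_0)=r_0$ with $g(x_0)=0_Z$. The ``furthermore'' part of Theorem~\ref{ThLagMult11_real} gives that $x_0$ minimizes $(P[\varphi])$, and \eqref{InterseccionTeo_real} gives $\varphi(g(x_0))\ge 0$. This lower bound is not needed, however: since $g(x_0)=0_Z$ and $\varphi$ is positively homogeneous, $\varphi(0_Z)=\varphi(0\cdot 0_Z)=0\cdot\varphi(0_Z)=0$. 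Hence $\varphi(g(x_0))=0$. Consequently $f(x_0)+\varphi(g(x_0))=r_0$, which equals the infimum of $(P[\varphi])$, so $x_0$ is a minimizer.

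The main obstacle is mostly bookkeeping. One point needs checking: Theorem~\ref{ThLagMult11_real} routes through Proposition~\ref{proposition:biyeccion_Gamma_Gamma'}, which requires $r_0\in\mathrm{ND}(P(0_Z))$. This holds because $r_0$ is the infimum of $V(0_Z)$, so no element of $V(0_Z)$ lies strictly below $r_0$.

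A second point is that the paper's standing assumption \eqref{Hipotesis_regularidad}, $V(z)\neq\varnothing$ on $U$, is formally in force. It is not used in the arguments of Theorem~\ref{ThLagMult11} or Proposition~\ref{proposition:biyeccion_Gamma_Gamma'}, so I would note this rather than add a hypothesis.

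As a safeguard, I would also give a direct verification of the infimum claim. For $x\in\Omega$ with $g(x)\neq 0_Z$, the hypothesis gives $f(x)+\varphi(g(x))>r_0$. For feasible $x$, i.e.\ $g(x)=0_Z$, we have $f(x)+\varphi(0_Z)=f(x)\ge r_0$. So $r_0$ is a lower bound for $(P[\varphi])$. It is attained as an infimum along any minimizing sequence of feasible points, on which the penalty term vanishes. This elementary argument confirms the conclusion independently of the process machinery.
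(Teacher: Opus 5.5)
Your proposal is correct and follows the paper's intended (omitted) proof: you specialize Theorem~\ref{ThLagMult11_real} to the singleton maps $F=\{f\}$, $G=\{g\}$ after checking that $\varphi\in\mathcal{S}_{r_0}$ and $r_0\in\mathrm{ND}(P(0_Z))$. Your additions are also sound: the equality $\varphi(g(x_0))=0$ follows from $\varphi(0_Z)=0$ (the general theorem's \eqref{InterseccionTeo_real} only gives $\ge 0$), and your lower-bound plus minimizing-sequence argument proves the whole statement using only $\varphi(0_Z)=0$ and the strict inequality, independently of the process machinery.
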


The proof is omitted since it follows directly from that of the more general theorem.

\section{Verification of the General Assumptions under Lipschitz and Cone-Regularity Conditions}\label{sec:verification_lipschitz}

This section provides concrete conditions that ensure the existence of Lagrange multiplier processes. While Section~\ref{sec:general_framework} establishes the general theory under abstract geometric assumptions, here we identify verifiable hypotheses ---involving Lipschitz continuity and bounded bases--- that guarantee these assumptions hold. The main result, Theorem~\ref{teor:TML}, gives sufficient conditions for the nonemptiness of $\Gamma_{y_0}$, and we derive explicit penalty formulations in the scalar and single-valued cases. Specifically, it requires the composite set-valued mapping $V = F \circ G^{-1}$ to be Lipschitzian at the origin, ensuring uniform control of variations near the reference point; the ordering cone $Y_+$ to possess a bounded base, a property that is crucial for the separation arguments involved; and a nondegeneracy condition on $y_0$ relative to a $\delta$-perturbation of the cone, which guarantees that $y_0$ is suitably isolated within $V(0_Z)$. 

To this end, we start by introducing the terminology required for the formulation of Theorem~\ref{teor:TML}. In particular, we first recall the notion of cone perturbations (or dilations), which plays a central role in expressing the nondegeneracy condition on the reference point $y_0$. For any $y \in Y$ and $A \subset Y$, define the distance from $y$ to $A$ by
\[
d(y, A) := \inf\{ \|y - a\| : a \in A \}.
\]
Given a cone $C \subset Y$ and $0 < \varepsilon < 1$, set
\[
S_C := S_Y \cap C, 
\qquad 
S_{(C, \varepsilon)} := \{ y \in Y : d(y, C \cap S_Y) \le \varepsilon \}.
\]
Then
\[
S_{(C, \varepsilon)} = \operatorname{cl}((S_Y \cap C) + \varepsilon B_Y),
\]
so any $y \in S_{(C, \varepsilon)}$ can be represented as
\begin{equation} \label{puntos_S_C_epsilon_como_limite}
y = \lim_{n \to \infty} (y_n + z_n),
\end{equation}
for some sequences $y_n \in C \cap S_Y$ and $z_n \in \varepsilon B_Y$. Furthermore, given a bounded base \(B\) of \( C \), we set
\[
\sigma_{B} := \sup_{b \in B} \|b\| > 0,
\qquad
\delta_{B} := \inf_{b \in B} \|b\| > 0.
\]
\begin{definition}\label{def:epsilon_conic_neighbourhood} 
Let $Y$ be a normed space and $C \subset Y$ a cone. We define:
\begin{itemize}
\item[(i)] For $0 < \varepsilon < 1$, the cone 
\[
C_{\varepsilon} := \operatorname{cone}(S_{(C, \varepsilon)})
\]
is the $\varepsilon$-conic neighborhood of $C$.

\item[(ii)] If $C$ is convex and $B$ is a base of $C$, then for $0 < \varepsilon < \min\{1, \delta_B\}$ we define
\[
B_{\varepsilon} := \{ y \in Y : d(y, B) \le \varepsilon \}, 
\qquad 
C_{(B, \varepsilon)} := \operatorname{cone}(B_{\varepsilon}),
\]
called the Henig dilating cone associated with $C$ (via the base $B$).
\end{itemize}
\end{definition}

\begin{lema}\label{lem:base_acotada_dilation_Henig}
Let $X$ be a normed space and let $C \subset X$ be a cone. 
If $B$ is a bounded base of $C$, then there exists $\varepsilon'>0$ such that, for every $0<\varepsilon<\varepsilon'$, the Henig dilated cone $C_{(B,\varepsilon)}$ admits a bounded base.
\end{lema}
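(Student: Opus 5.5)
The plan is to use the functional characterization of bounded bases. Since $B$ is a base, $C^{\#}\neq\varnothing$; boundedness of $B$ should upgrade this to a uniform estimate. Concretely, I expect to find $f\in X^*$ and $\alpha>0$ with $f(y)\ge \alpha\|y\|$ for all $y\in C$. The standard route applies Hahn--Banach to separate $0_X$ from $\operatorname{cl}(B)$. This is possible because $B$ is convex and $0_X\notin\operatorname{cl}(B)$. It yields $f\in X^*$ and $\beta>0$ with $f(b)\ge\beta$ for all $b\in B$.

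Writing any $y\in C\setminus\{0_X\}$ as $y=\lambda_y b_y$, we then get
\[
f(y)=\lambda_y f(b_y)\ge \lambda_y\beta\ge \frac{\beta}{\sigma_B}\,\lambda_y\|b_y\| = \frac{\beta}{\sigma_B}\|y\|.
\]
So we may take $\alpha:=\beta/\sigma_B$. This step uses only that $B$ is bounded and that $0_X\notin \operatorname{cl}(B)$.

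Next, I will set $\varepsilon':=\min\{1,\delta_B,\beta/(2\|f\|_*)\}$ and fix $0<\varepsilon<\varepsilon'$. The claim is that $f$ remains uniformly positive on $B_\varepsilon$. Indeed, for $y\in B_\varepsilon$ there is $b\in B$ with $\|y-b\|\le\varepsilon+\eta$, with $\eta>0$ arbitrary. Hence $f(y)\ge f(b)-\|f\|_*(\varepsilon+\eta)\ge\beta/2-\|f\|_*\eta$, and letting $\eta\to0$ gives $f(y)\ge\beta/2$ on $B_\varepsilon$. Also $\|y\|\le\sigma_B+\varepsilon$, so $B_\varepsilon$ is bounded. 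Moreover, $C_{(B,\varepsilon)}=\operatorname{cone}(B_\varepsilon)=\{t y: t\ge0,\ y\in B_\varepsilon\}$. This cone is convex, because $B_\varepsilon$ is convex as the closed $\varepsilon$-neighbourhood of a convex set, and the conic hull of a convex set is convex. For a nonzero $w=ty$ we then have $f(w)=t f(y)\ge t\beta/2$ and $\|w\|\le t(\sigma_B+\varepsilon)$. This gives $f(w)\ge \frac{\beta}{2(\sigma_B+\varepsilon)}\|w\|>0$. Thus $f\in C_{(B,\varepsilon)}^{\#}$, with a uniform lower bound.

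Finally, I will define the candidate base $\widetilde B:=\{w\in C_{(B,\varepsilon)}: f(w)=1\}$. By the result recalled in the preliminaries (\cite[Theorem~1.47]{AliprantisTourky2007}), $\widetilde B$ is a base for the convex cone $C_{(B,\varepsilon)}$; uniqueness of the representation is immediate because $f$ normalizes. Boundedness follows from the inequality just proved. For $w\in\widetilde B$ we get $\|w\|\le 2(\sigma_B+\varepsilon)/\beta$, and $0\notin\operatorname{cl}(\widetilde B)$ since $f\equiv1$ on $\widetilde B$.

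The main obstacle is the first step, producing the functional $f$ with $\inf_B f>0$. If $\operatorname{cl}(B)$ is used in the Hahn--Banach separation (convex and closed, not containing $0$), strict separation of a point from a closed convex set in a normed space gives it directly. The remainder is routine perturbation estimates. One subtlety to check is nontriviality: the cone $C_{(B,\varepsilon)}$ must not be all of $X$. This is automatic because $f>0$ on its nonzero elements.
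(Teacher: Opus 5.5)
Your proof is correct, and it differs from the paper's proof in which set you separate from the origin.

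\textbf{The paper's route.} It fixes $\varepsilon$, forms $B_\varepsilon=\operatorname{cl}(B+\varepsilon B_X)$ and checks $0_X\notin B_\varepsilon$. It then separates $0_X$ from $B_\varepsilon$ itself by Hahn--Banach, getting $g\in X^*$ and $\gamma>0$ with $\gamma<\inf_{B_\varepsilon} g$. The new base is $\{x\in C_{(B,\varepsilon)} : g(x)=\gamma\}$. It is bounded because each of its elements is $\mu y$ with $y\in B_\varepsilon$ and $\mu=\gamma/g(y)<1$.

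\textbf{Your route.} You separate $0_X$ from $\operatorname{cl}(B)$ once. You then transport the lower bound to $B_\varepsilon$ by the estimate $f(y)\ge f(b)-\|f\|_*\|y-b\|$.

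\textbf{What your route gains.} A single functional $f$ serves all $\varepsilon<\varepsilon'$ at once. You also get explicit constants: $f(w)\ge \frac{\beta}{2(\sigma_B+\varepsilon)}\|w\|$ on $C_{(B,\varepsilon)}$, and hence an explicit bound on $\widetilde B$.

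\textbf{What it costs.} The threshold is smaller. Since $\beta\le f(b)\le \|f\|_*\|b\|$ for all $b\in B$, you have $\beta/(2\|f\|_*)\le \delta_B/2$. Separating $B_\varepsilon$ directly only needs $0_X\notin B_\varepsilon$, so it works for every $\varepsilon<\delta_B$. This does not matter for the lemma, which only asks for some $\varepsilon'$.

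\textbf{A point in your favour.} Your argument uses only $\inf_B f>0$, which Hahn--Banach always provides. The paper's proof instead opens by asserting $B=\{x\in C: f(x)=\varepsilon'\}$ for some $f\in C^{\#}$. It then uses $\|y_n\|\ge\varepsilon'$ for $y_n\in B$.
\begin{itemize}
\item For an arbitrary bounded base in infinite dimensions, the linear functional $\lambda b\mapsto\lambda$ determined by $B$ can be discontinuous, so such a representation need not exist.
\item Even granting it, the norm bound would require $\|f\|_*\le 1$.
\end{itemize}
The natural repair is $\|y_n\|\ge\delta_B$, that is, $\varepsilon'=\delta_B$. Your argument avoids this issue entirely.
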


\begin{proof}
Since $B$ is a bounded base of $C$, by \cite[Theorem~1.47]{AliprantisTourky2007} there exist $f\in C^{\#}$ and $\varepsilon'>0$ such that 
\[
B=\{x\in C : f(x)=\varepsilon'\}.
\]
Fix $0<\varepsilon<\varepsilon'$, and set $B_{\varepsilon}:=\operatorname{cl}(B+\varepsilon B_X)$.  The set $B_{\varepsilon}$ is convex, bounded, and does not contain $0_X$.  
Indeed, if $x=\lim_n (y_n+\varepsilon b_n)$ with $y_n\in B$ and $b_n\in B_X$, then $\|y_n+\varepsilon b_n\|\geq \|y_n\|-\|\varepsilon b_n\|\geq \varepsilon'-\varepsilon>0$. Hence $\|x\|>0$

By Hahn--Banach, there exist $g\in X^*$ and $\gamma>0$ such that  
\[
g(0)=0<\gamma < \inf_{B_{\varepsilon}} g .
\]
Define
\[
B' := \{x\in C_{(B,\varepsilon)} : g(x)=\gamma\}.
\]

We first show that $B'$ is bounded.  
If $x\in B'$, then $x=\mu y$ for some $y\in B_{\varepsilon}$ and $\mu>0$, hence
\[
\gamma = g(x)=\mu g(y)\quad\Rightarrow\quad 
0<\mu < \frac{\gamma}{\gamma}=1.
\]
Since $B_{\varepsilon}$ is bounded, $B'$ is bounded as well.

Now we show that $B'$ is indeed a base for $C_{(B,\varepsilon)}$.  
By \cite[Theorem~1.47]{AliprantisTourky2007}, it is enough to check that $g(x)>0$ for all $x\in C_{(B,\varepsilon)}\setminus\{0\}$.  
If $x=\lambda y$ with $\lambda>0$ and $y\in B_{\varepsilon}$, then
\[
g(x)=\lambda g(y)>\lambda\gamma>0.
\]
Thus $B'$ is a bounded base of $C_{(B,\varepsilon)}$.
\end{proof}

We introduce the following terminology. For every $\varepsilon>0$, set
\[
C_{\varepsilon}^{\circ}:=\operatorname{cone}\bigl((S_Y \cap C)+\varepsilon B_Y\bigr).
\]
The next lemma summarizes the properties of these dilation cones that will be used later.

\medskip

\begin{lema} \label{lem:C_epsilon_mu_fdo}
Let \(Y\) be a normed space, let \(C \subset Y\) be a cone, and let \(\varepsilon, \mu \in (0,1)\). The following statements hold:
\begin{itemize}
    \item[(i)] If \(D\subset Y\) is another cone and \(C\subset D\), then \(C_{\varepsilon}^{\circ}\subset D_{\varepsilon}^{\circ}\).
    \item[(ii)]  \(\operatorname{cl}C_{\varepsilon}^{\circ} = C_{\varepsilon}\).
    \item[(iii)] \((C_{\varepsilon}^{\circ})_{\mu}^{\circ} \subset C_{\varepsilon + (1+\varepsilon)\mu}^{\circ} \subset C_{\varepsilon + 2\mu}^{\circ}\).
    \item[(iv)] If \(\varepsilon<\mu\), then \(C_{\varepsilon}\subset C_{\mu}^{\circ}\).
    \item[(v)] If \(\varepsilon<\mu<\tfrac13\), then  \((C_{\varepsilon})_{\mu} \subset C_{3\mu}\). 
    \item[(vi)] \((C_{\frac{\mu}{4}})_{\frac{\mu}{4}} \subset (C_{\frac{\mu}{4}})_{\frac{\mu}{3}} \subset C_{\mu}\).
\end{itemize}
\end{lema}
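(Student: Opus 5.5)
The plan is to work throughout with the descriptions
\[
C_\varepsilon^\circ=\{\lambda(c+\varepsilon b):\lambda\ge0,\ c\in S_C,\ b\in B_Y\},\qquad
C_\varepsilon=\operatorname{cone}\bigl(\operatorname{cl}(S_C+\varepsilon B_Y)\bigr).
\]
Each item is then an elementary norm estimate. No separation argument is needed, because these are statements about cones in a normed space.

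\emph{Item (i).} This is immediate: $S_C\subset S_D$ gives $S_C+\varepsilon B_Y\subset S_D+\varepsilon B_Y$, and taking conic hulls preserves the inclusion.

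\emph{Item (ii).} The inclusion $C_\varepsilon\subset \operatorname{cl} C_\varepsilon^\circ$ is clear, since $\operatorname{cl}(S_C+\varepsilon B_Y)\subset \operatorname{cl} C^\circ_\varepsilon$ and the latter is a cone. For the converse, take $\lambda_n(c_n+\varepsilon b_n)\to y$. Since $\|c_n+\varepsilon b_n\|\ge 1-\varepsilon>0$, the scalars $\lambda_n$ are bounded; pass to a subsequence with $\lambda_n\to\lambda$. If $\lambda>0$, then $c_n+\varepsilon b_n\to y/\lambda\in S_{(C,\varepsilon)}$. If $\lambda=0$, then $y=0$, which lies in the cone anyway.

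\emph{Item (iii).} Take $w=\lambda(u+\mu b)$ with $u\in S_Y\cap C_\varepsilon^\circ$, so $u=t(c+\varepsilon b')$ with $t=1/\|c+\varepsilon b'\|\in[1/(1+\varepsilon),1/(1-\varepsilon)]$. Rescale by $t^{-1}$:
\[
w=\lambda t\bigl(c+\varepsilon b'+t^{-1}\mu b\bigr),\qquad \|\varepsilon b'+t^{-1}\mu b\|\le\varepsilon+(1+\varepsilon)\mu .
\]
This gives the first inclusion. The second follows from $(1+\varepsilon)\mu<2\mu$ and monotonicity of $C^\circ_\eta$ in $\eta$.

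\emph{Item (iv).} Any point of $\operatorname{cl}(S_C+\varepsilon B_Y)$ is $c+v$ with $c\in S_C$ and $\|v\|\le\varepsilon'$ for any $\varepsilon'\in(\varepsilon,\mu)$, because closure only adds points at distance at most $\varepsilon$ from $S_C$. Hence it lies in $S_C+\mu B_Y$, and taking cones gives the claim.

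\emph{Items (v) and (vi).} For (v), write $(C_\varepsilon)_\mu=\operatorname{cl}\bigl((C_\varepsilon)^\circ_\mu\bigr)$ by (ii). By (iv), $C_\varepsilon\subset C^\circ_{\varepsilon'}$ for $\varepsilon<\varepsilon'<\mu$. Then (i) and (iii) give
\[
(C_\varepsilon)^\circ_\mu\subset C^\circ_{\varepsilon'+(1+\varepsilon')\mu}\subset C^\circ_{\mu+(1+\mu)\mu}.
\]
Since $\mu<1/3$, we have $\mu+(1+\mu)\mu<3\mu$, so $\mu+(1+\mu)\mu\le 3\mu$; closing and using (ii) yields $(C_\varepsilon)_\mu\subset C_{3\mu}$. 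I will check carefully that closure preserves this inclusion, which it does since $C_{3\mu}$ is closed by (ii).

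For (vi), the first inclusion is monotonicity in the dilation parameter, since $\mu/4<\mu/3$. The second is (v) applied with $\varepsilon=\mu/4<\mu/3<1/3$, giving $(C_{\mu/4})_{\mu/3}\subset C_{\mu}$.

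The main obstacle I expect is item (ii), specifically the boundedness of $\lambda_n$ and the degenerate limit $\lambda=0$. It needs $\varepsilon<1$ to keep $S_C+\varepsilon B_Y$ away from the origin, and this is exactly the reason for the hypothesis $\varepsilon,\mu\in(0,1)$. The remaining items are bookkeeping of radii.
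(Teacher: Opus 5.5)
Your proposal is correct and follows the paper's proof item by item. The only real difference is in (ii): you give a self-contained sequential argument (the $\lambda_n$ are bounded because $\|c_n+\varepsilon b_n\|\ge 1-\varepsilon$, and the case $\lambda=0$ forces $y=0$) where the paper cites Lemma~2.1.43(iii) of G\"opfert et al.; your auxiliary $\varepsilon'$ in (iv)--(v) is harmless but unnecessary. One correction of emphasis in (v): $2\mu+\mu^2<3\mu$ holds for every $\mu<1$, so $\mu<\tfrac13$ is really needed to guarantee $3\mu<1$, which is what lets you apply (ii) to get $\operatorname{cl}C^{\circ}_{3\mu}=C_{3\mu}$.
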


\begin{proof}
\noindent (i) Since \(C\cap S_Y\subset D\cap S_Y\), adding \(\varepsilon B_Y\) preserves inclusion, and taking conical hulls yields  
\[
C_{\varepsilon}^{\circ}=\operatorname{cone}(C\cap S_Y+\varepsilon B_Y)
    \subset 
\operatorname{cone}(D\cap S_Y+\varepsilon B_Y)=D_{\varepsilon}^{\circ}.
\]

\noindent (ii) Because \(0<\varepsilon<1\), \cite[Lemma 2.1.43(iii)]{GoeRiaTamZal2023} applies and gives  
\[
\operatorname{cl}C_{\varepsilon}^{\circ}
   =\operatorname{cone}\bigl(\operatorname{cl}((S_Y\cap C)+\varepsilon B_Y)\bigr)
   = C_{\varepsilon}.
\]

\noindent (iii) Let \(x\in (C_{\varepsilon}^{\circ})_{\mu}^{\circ}\). If \(x=0_Y\), the conclusion is immediate. Otherwise, write  
\[
x = t(u+\mu v),\qquad 
u = s(w+\varepsilon b),
\]
with \(t,s>0\), \(w\in C\cap S_Y\) and \(b,v\in B_Y\). Then  
\[
x = ts\Bigl(w + \varepsilon b + \frac{\mu}{s}v\Bigr).
\]
From \(\|u\|=1\) and \(u = s(w+\varepsilon b)\), we obtain  
\[
\frac1s = \|w+\varepsilon b\| \le 1+\varepsilon.
\]
Hence  
\[
x \in \operatorname{cone}\!\Bigl((C\cap S_Y) + (\varepsilon+(1+\varepsilon)\mu)B_Y\Bigr)
     = C_{\varepsilon + (1+\varepsilon)\mu}^{\circ}.
\]
The inclusion \(C_{\varepsilon + (1+\varepsilon)\mu}^{\circ}\subset C_{\varepsilon+2\mu}^{\circ}\) is immediate from \((1+\varepsilon)\mu\le 2\mu\).

\noindent (iv) Let \(x\in C_{\varepsilon}=\operatorname{cone}(S_{(C,\varepsilon)})\). If \(x=0_Y\), there is nothing to prove. Thus, we assume \(x\neq 0_Y\). Then there exist \(t>0\) and \(y\in S_{(C,\varepsilon)}\) such that \(x = ty\). Let \(\delta := d(y, C\cap S_Y)\). By the definition of \(S_{(C,\varepsilon)}\), we have \(\delta \leq \varepsilon < \mu\).

Choose a sequence \((z_n)\subset C\cap S_Y\) with \(\|y-z_n\|\to\delta\). Then, for some \(n_0\),  
\[
\|y-z_{n_0}\| < \mu,
\]
so \(y\in C\cap S_Y + \mu B_Y\) and therefore \(x\in C_{\mu}^{\circ}\).

\noindent (v) Since \(\varepsilon<\mu\), part (iv) yields \(C_{\varepsilon}\subset C_{\mu}^{\circ}\). Now, applying (i) and (iii),
\[
(C_{\varepsilon})_{\mu}^{\circ}
    \subset (C_{\mu}^{\circ})_{\mu}^{\circ}
    \subset C_{3\mu}^{\circ}.
\]
Taking closures and using (ii) gives  
\[
(C_{\varepsilon})_{\mu}
   = \operatorname{cl} \,(C_{\varepsilon})_{\mu}^{\circ}
   \subset \operatorname{cl}C_{3\mu}^{\circ}
   = C_{3\mu}.
\]
Let us note that, in the last equality, the assumption $3\mu<1$ is essential in order to apply (ii) correctly.

\noindent (vi) The first inclusion is trivial, and the second one is an immediate consequence of (v).
\end{proof}

In order to formulate the regularity assumptions required in Theorem~\ref{teor:TML}, we introduce a pointwise notion of Lipschitz continuity for set-valued mappings. This notion provides a convenient control of the values of a mapping with respect to a fixed reference point and will play a key role in the variational arguments developed below. Before introducing our notion of Lipschitzian behavior at a point, we recall the classical definition of Lipschitz continuity for set-valued mappings, which is well established in the literature.

\begin{definition}[{\cite[Definition~3.3.11(i)]{KTZ2015}}]\label{def:lipschitz_continuous}
Let $X$ and $Y$ be normed spaces, and let $U\subset X$ be a nonempty set.
A set-valued mapping $F:U\rightrightarrows Y$ is said to be Lipschitz continuous (on $U$)
if there exists a constant $L\ge 0$ such that
\[
F(x)\subset F(u)+L\|x-u\|\,B_Y,
\qquad \forall\, x,u\in U.
\]
The infimum of all such constants $L$ is called the exact Lipschitz bound of $F$ on $U$
and is denoted by $L_F\ge 0$.
\end{definition}
We now present a new and weaker, localized notion of Lipschitz behavior at a specific point, which will be useful in subsequent arguments.
\begin{definition}\label{def:lipschitz_at_point}
Let $X$ and $Y$ be normed spaces, and let $U\subset X$. A set-valued mapping $F:U\rightrightarrows Y$ is said to be Lipschitzian at $\bar x\in U$ if there exists a constant $L\ge 0$ such that
\[
F(w)\subset F(\bar x)+L\|w-\bar z\|\,B_Y,
\qquad \forall w\in U.
\]
The infimum of all such constants $L$ is called the exact Lipschitzian bound of $F$ at $\bar x$ and is denoted by $L_{F,\bar x}\ge 0$.
\end{definition}

The above notion is weaker than the notion of a set-valued mapping being Lipschitzian around $\bar x\in U$ as defined in \cite[Definition~3.3.11(ii)]{KTZ2015}. Indeed, in our definition the reference set $F(\bar x)$ is fixed, and the values $F(w)$, for $w\in U$, are viewed as uniformly controlled variations of $F(\bar x)$ according to a linear growth rate determined by the constant $L_{F,\bar x}$. This weaker notion is sufficient for our purposes, since the subsequent arguments only require uniform control of the values of $F$ with respect to a fixed reference point, typically $0_Z$. In particular, no comparison between $F(z_1)$ and $F(z_2)$ for arbitrary $z_1,z_2\in U$ is needed. This allows us to work under milder regularity assumptions while still obtaining the desired variational and optimality properties.

In the following example, we illustrate the relationship between the notion of local Lipschitz continuity introduced in this paper and the two previously studied Lipschitz-type notions from the literature.

\begin{example}
Let $X=Y=\mathbb{R}$, $U=\mathbb{R}$, and define the set-valued mappings $F,G:\mathbb{R}\rightrightarrows\mathbb{R}$ by
\[
F(x):=\{y\in\mathbb{R}: y\ge x^{2}\},\qquad
G(x):=\{y\in\mathbb{R}: y\ge x^{3}\},\qquad x\in\mathbb{R}.
\]
Then $F$ is Lipschitzian at $0$, but it is not Lipschitzian around $0$ in the sense of \cite[Definition~3.3.11(ii)]{KTZ2015}. Moreover, $G$ is not Lipschitzian at any point $x\in \mathbb{R}$.
\end{example}

The following lemma provides a sufficient condition ensuring that the composition of two set-valued mappings is Lipschitzian at a given point, together with an explicit estimate of the corresponding Lipschitz constant.

\begin{lema}\label{lem:composition_lipschitz_explicit}
Let $X,Y,Z$ be normed spaces, and let $U\subset Z$ and $V\subset X$. 
Assume that $F_{1}:U\rightrightarrows X$ is Lipschitzian at some $z\in U$, with $F_{1}(U)\subset V$, and that $F_{2}:V\rightrightarrows Y$ is Lipschitzian at every point $x\in F_{1}(z)$. 
Moreover, suppose that
\[
L_{F_{2},F_{1}(z)}:=\sup\{\,L_{F_{2},x}: x\in F_{1}(z)\,\}<+\infty.
\]
(The finiteness of $L_{F_{2},F_{1}(z)}$ is automatic, for instance, if $F_{1}$ is single-valued or if $F_{2}$ is Lipschitz continuous on $V$).

Then the composition $F_{2}\circ F_{1}$ is Lipschitzian at $z$, with Lipschitz constant explicitly given by
\begin{equation}\label{eq_comp_lips}
L_{F_{2}\circ F_{1},\,z}\leq L_{F_{2},F_{1}(z)}\,L_{F_{1},z}.    
\end{equation}

\end{lema}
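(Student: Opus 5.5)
The plan is to chain the two pointwise Lipschitz inclusions of Definition~\ref{def:lipschitz_at_point}, taking care with the fact that $L_{F_1,z}$ and $L_{F_2,x}$ are infima rather than attained constants. Fix constants $L_1 > L_{F_1,z}$ and, for every $x\in F_1(z)$, $L_2 > L_{F_2,F_1(z)}$; by definition of the infimum (and of the supremum $L_{F_2,F_1(z)}$), $L_1$ is an admissible constant for $F_1$ at $z$ and $L_2$ is admissible for $F_2$ at each $x \in F_1(z)$. We will show that $L_2L_1$ is an admissible constant for $F_2\circ F_1$ at $z$, and then let $L_1\downarrow L_{F_1,z}$, $L_2\downarrow L_{F_2,F_1(z)}$ to obtain \eqref{eq_comp_lips}.

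Fix $w\in U$ and $y\in (F_2\circ F_1)(w)$, so $y\in F_2(u)$ for some $u\in F_1(w)\subset V$. Lipschitzianity of $F_1$ at $z$ gives $u\in F_1(z)+L_1\|w-z\|B_X$, i.e.\ there is $x\in F_1(z)$ with $\|u-x\|\le L_1\|w-z\|$. Since $F_2$ is Lipschitzian at $x$ with constant $L_2$, we get $F_2(u)\subset F_2(x)+L_2\|u-x\|B_Y$, hence
\[
y\in F_2(x)+L_2L_1\|w-z\|B_Y\subset (F_2\circ F_1)(z)+L_2L_1\|w-z\|B_Y,
\]
using $x\in F_1(z)$. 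As $w$ and $y$ were arbitrary, $F_2\circ F_1$ is Lipschitzian at $z$ with constant $L_2L_1$.

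The main obstacle is the subtlety that the infimum in Definition~\ref{def:lipschitz_at_point} need not be attained when $F_1(z)+rB_X$ is not closed, so one cannot directly use $L_{F_1,z}$ itself; that is why I work with strict upper bounds $L_1,L_2$ and pass to the limit only at the end. (If $F_2(\cdot)$ values are closed the limit could be taken directly, but it is unnecessary.) A secondary point is that the point $x\in F_1(z)$ depends on $w$, so a uniform constant over $F_1(z)$ is genuinely needed, which is exactly what the finiteness of $L_{F_2,F_1(z)}$ supplies; the parenthetical remarks follow since a single-valued $F_1$ makes the supremum a single term, and Lipschitz continuity on $V$ bounds every $L_{F_2,x}$ by $L_{F_2}$.
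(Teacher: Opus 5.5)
Your proof is correct and follows essentially the same route as the paper's. Like the paper, you fix $L_1>L_{F_1,z}$ and $L_2>L_{F_2,F_1(z)}$, chain the two pointwise inclusions through an intermediate point $x\in F_1(z)$ to get admissibility of $L_1L_2$, and then pass to the infimum. The one implicit step is that any constant strictly above the infimum is admissible, which follows from monotonicity of the inclusion in $L$; you leave it implicit, as the paper does.
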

\begin{proof}
Fix $z\in U$. We first show that there exists a constant $L\ge 0$ such that
\[
F_{2}(F_{1}(z'))\subset F_{2}(F_{1}(z)) + L\|z'-z\|\,B_Y,
\qquad \forall z'\in U.
\]

Let $z'\in U$ be arbitrary, and take $y'\in F_{2}(F_{1}(z'))$. Then there exists
$x'\in F_{1}(z')$ such that $y'\in F_{2}(x')$.
Fix any $L_{1}>L_{F_{1},z}$. By the Lipschitzian property of $F_{1}$ at $z$, we have
\[
F_{1}(z')\subset F_{1}(z)+L_{1}\|z'-z\|\,B_X,
\qquad \forall z'\in U,
\]
and hence there exists $x\in F_{1}(z)$ satisfying
\[
\|x'-x\|\le L_{1}\|z'-z\|.
\]

Now fix any $L_{2}>L_{F_{2},F_{1}(z)}$. Since $F_{2}$ is Lipschitzian at $x\in F_{1}(z)$
and $L_{2}>L_{F_{2},x}$, it follows that
\[
F_{2}(x')\subset F_{2}(x)+L_{2}\|x'-x\|\,B_Y,
\qquad \forall x'\in V.
\]
Consequently, there exists $y\in F_{2}(x)$ such that
\[
\|y'-y\|
\le L_{2}\|x'-x\|
\le L_{2}L_{1}\|z'-z\|.
\]

Since $L_{1}$ and $L_{2}$ depend only on $z$, we obtain
\[
F_{2}(F_{1}(z'))
\subset F_{2}(F_{1}(z))+L_{1}L_{2}\|z'-z\|\,B_Y,
\qquad \forall z'\in U,
\]
and thus $F_{2}\circ F_{1}$ is Lipschitzian at $z$.

Moreover, the previous inclusion implies
\[
L_{F_{2}\circ F_{1},\,z}
\le \inf\{\,L_{1}L_{2} : L_{1}>L_{F_{1},z},\; L_{2}>L_{F_{2},F_{1}(z)}\,\}.
\]
Since
\[
\inf\{\,L_{1} : L_{1}>L_{F_{1},z}\}=L_{F_{1},z}
\quad\text{and}\quad
\inf\{\,L_{2} : L_{2}>L_{F_{2},F_{1}(z)}\,\}=L_{F_{2},F_{1}(z)},
\]
we conclude that
\[
L_{F_{2}\circ F_{1},\,z}
\le L_{F_{2},F_{1}(z)}\,L_{F_{1},z}.
\]
This completes the proof.
\end{proof}

\begin{remark}
In general, the inequality \eqref{eq_comp_lips} cannot be replaced by an equality. 
Indeed, consider the functions $f,g:\mathbb{R}\to\mathbb{R}$ defined by
\[
f(x)=
\begin{cases}
1, & x=1,\\
0, & x\neq 1,
\end{cases}
\qquad
g(x)=
\begin{cases}
2, & x=2,\\
0, & x\neq 2.
\end{cases}
\]
Both functions are Lipschitzian at $0$, with minimal Lipschitz constant equal to $1$.
However, the composition $g\circ f$ is the constant zero function, and therefore its minimal Lipschitz constant at $0$ is equal to $0$.
\end{remark}

We now establish the main result of this section.

\begin{theorem} \label{teor:TML}
Let $y_0 \in \mathrm{ND}(P(0_Z))$ and assume:
\begin{itemize}
\item[(a)] The set-valued map \(V: U \rightrightarrows Y, \quad V(z) := F \circ G^{-1}(z)\) is Lipschitzian at $0_Z\in U$.
\item[(b)] The cone $Y_+$ has a bounded base.
\item[(c)] There exists $0 < \delta < 1$ such that 
\[
V(0) \cap [y_0 - (Y_+)_{\delta}] \subset \{ y_0 \}.
\]
\end{itemize}
Then $\Gamma_{y_0} \neq \varnothing$; that is, there exists $\Delta \in P(Z, Y)$ which is a Lagrange multiplier of $(P(0_Z))$ at $y_0$; which means that $y_0$ is a nondominated point of the program
\[
\text{Minimize } F(x) + \Delta(G(x)) \quad \text{subject to } x \in \Omega. \tag{$P[\Delta]$}
\]

Furthermore, if $y_0$ is a minimal point of $(P(0_Z))$ (i.e., if $y_0 \in F(x_0)$ for some feasible solution $x_0$), then $y_0$ is also a minimal point of $(P[\Delta])$ (achieved at $x_0$), and
\begin{equation} \label{eq:inclusion-henig_inicio}
\Delta(G(x_0)) \cap (-Y_+) = \{ 0_Y \}.
\end{equation}
\end{theorem}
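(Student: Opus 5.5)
The plan is to build one explicit process $\Delta$ from a dilated ordering cone, check (a)–(c) of Assumption~\ref{hipotesisGeneral} for it, and then invoke Theorem~\ref{ThLagMult11}. Let $B$ be the bounded base of $Y_+$ from (b).

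\textbf{The cone $K$.} Take $K:=C_{(B,\varepsilon)}$, the Henig dilating cone of $Y_+$, with $\varepsilon>0$ small. Its properties:
\begin{itemize}
\item By Lemma~\ref{lem:base_acotada_dilation_Henig}, for small $\varepsilon$ it has a bounded base. So it is pointed, and there is $g\in K^{\#}$ with $g(k)\ge c\|k\|$ for all $k\in K$, for some $c>0$.
\item It is solid, since $B_\varepsilon$ contains a ball around any $b\in B$.
\item It contains $Y_+$.
\item Because $B$ is bounded and $\delta_B>0$, shrinking $\varepsilon$ (and perhaps $K$ slightly) gives $K\subset (Y_+)_{\eta}$ for any prescribed $\eta>0$. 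Here Lemma~\ref{lem:C_epsilon_mu_fdo}(vi) lets me absorb a further dilation: one more $\eta$-dilation of $K$ stays inside $(Y_+)_\delta$ once $\eta\le \delta/4$.
\end{itemize}

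\textbf{The process $\Delta$.} Let $L>L_{V,0}$ be a Lipschitz constant of $V$ at $0_Z$ from (a), and pick $M>0$ large. Define
\[
\operatorname{Gph}(\Delta):=\{(z,y)\in Z\times Y:\ y\in K,\ g(y)\ge M\|z\|\}.
\]
Its basic properties:
\begin{itemize}
\item It is a closed convex cone, because $K$ is closed and convex, $g$ is continuous and linear, and the norm is sublinear.
\item $\Delta$ is proper, since $\Delta(z)\ni t b$ for $b\in B$ and $t$ large.
\item $\Delta(0_Z)=K$.
\end{itemize}
Conditions (a) and (b) then follow:
\begin{itemize}
\item Condition (a): take $b\in\operatorname{int}K$. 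Then $g(b)>0$, and $(0_Z,b)$ is an interior point of the graph, since small perturbations $(z,y)$ keep $y\in K$ and $g(y)>M\|z\|$.
\item Condition (b): $Y_+\subset K$, and $K\cap(-Y_+)\subset K\cap(-K)=\{0_Y\}$ by pointedness.
\end{itemize}

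\textbf{Condition (c).} This is the core step. Suppose $(z,y-y_0)\neq 0$ with $y\in V(z)$ and $y_0-y\in\Delta(z)$.
\begin{itemize}
\item By (a), write $y=v+w$ with $v\in V(0_Z)$ and $\|w\|\le L\|z\|$.
\item Then $k:=y_0-v-w\in K$ and $c\|k\|\le g(k)$. Also $g(k)\ge M\|z\|\ge (M/L)\|w\|$, so $\|w\|\le (L\|g\|_*/M)\|k\|$, which is as small relative to $\|k\|$ as we like.
\item Hence $y_0-v=k+w$ lies in a small conic dilation of $K$, and therefore in $(Y_+)_\delta$ by the choice of $\varepsilon,M$. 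By (c) this forces $v=y_0$.
\item Then $-w=k\in K$ with $c\|w\|\le g(-w)$. Also $g(-w)\ge M\|z\|\ge (M/L)\|w\|$, while $g(-w)\le\|g\|_*\|w\|$. Taking $M>L\|g\|_*$ gives $w=0$.
\item So $k=0$, hence $M\|z\|\le g(k)=0$ and $z=0$. Thus $(z,y-y_0)=0$, a contradiction.
\end{itemize}
Therefore $\Delta\in\Gamma_{y_0}$, and Theorem~\ref{ThLagMult11} gives that $\Delta$ is a Lagrange multiplier. In the minimal case it also gives that $y_0$ is minimal for $(P[\Delta])$ at $x_0$. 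Finally, $\Delta(G(x_0))\subset K$ and $0_Y\in\Delta(0_Z)\subset\Delta(G(x_0))$. Since $K\cap(-Y_+)=\{0_Y\}$, this yields \eqref{eq:inclusion-henig_inicio}.

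\textbf{Main obstacle.} The hardest point is the cone bookkeeping in the third step of (c). I must show that "$k\in K$ plus a perturbation of relative size $\le\rho$" lands in $(Y_+)_\delta$, that is, $K^{\circ}_{\rho}\subset (Y_+)_{\delta}$ after the appropriate normalization. I will first prove $K\subset (Y_+)_{\delta/4}$ for small $\varepsilon$, using $\delta_B>0$ and $\sigma_B<\infty$: points of $B_\varepsilon$ normalized to the sphere are within $O(\varepsilon/\delta_B)$ of $S_Y\cap Y_+$. I will then apply Lemma~\ref{lem:C_epsilon_mu_fdo}(iii)–(vi) with $\rho\le\delta/4$ to absorb the second dilation.
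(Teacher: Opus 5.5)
Your proof is correct, and the process you build is in fact the paper's. For the base $B'=\{y\in K: g(y)=M\}$ of $K$ one has $\operatorname{cone}(B_Z\times B')=\{(z,y): y\in K,\ g(y)\ge M\|z\|\}$. This is the graph $\operatorname{cl}\operatorname{cone}(B_Z\times B)$ of Proposition~\ref{Claim3}, taken for a particular bounded base of a Henig dilation $Y'_+$ of $Y_+$, which plays the role of your $K$. Your cone bookkeeping is also the paper's: you place $K$ inside $(Y_+)_{\delta/4}$ and absorb a second $\delta/4$-dilation with Lemma~\ref{lem:C_epsilon_mu_fdo}(vi), exactly as in the chain in the proof of Proposition~\ref{prop:proposicion_01_sin_core}.

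The genuine difference is how you verify condition (c) of Assumption~\ref{hipotesisGeneral}.
\begin{itemize}
\item The paper rescales the base so that Lemma~\ref{lema:lema01} gives $d(-\lambda b, V(0_Z)-y_0)\ge 2L\lambda$ for all $b\in B$ and $\lambda>0$; here the balls $B_Y(b,2L)$ fit inside the dilated cone. It then contradicts the Lipschitz estimate $d(\hat y, V(0_Z)-y_0)\le L\|\hat z\|$ on $\operatorname{Gph}(V)-(0_Z,y_0)$, and needs a limiting argument for the closure.
\item You instead split $y=v+w$ and use $g\le \|g\|_*\|\cdot\|$ to make $w$ small relative to $k$. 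This lands $y_0-v$ in $(Y_+)_\delta$, so hypothesis (c) applies to $v$ itself. You then kill $w$ by choosing $M>L\|g\|_*$. Note that this step uses only the upper bound on $g$; the coercivity constant $c$ matters only for properness and for (a).
\end{itemize}

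What your route buys is explicitness. The graph is closed by inspection. Properness, $\Delta(0_Z)=K$, interiority for (a), and \eqref{eq:inclusion-henig_inicio} (from $\Delta(z)\subset K$ and pointedness of $K$) are each one-line checks. This avoids the core-topology argument of Proposition~\ref{Claim3}(ii)(d), and the size of the multiplier is quantified by $M$. In the scalar case $K=\mathbb{R}_+$, $g=\mathrm{id}$, your $\Delta$ is exactly $\Delta_\mu$ with $\mu>L$ from Theorem~\ref{teor:TML_real_single_valued}.

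What the paper's route buys is some generality. It works for $\operatorname{cl}\operatorname{cone}(B_Z\times B)$ with an arbitrary bounded base, not only a level set of a functional. It also isolates the nondegeneracy hypothesis in a reusable distance lemma.
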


The proof of Theorem \ref{teor:TML} follows from the nonemptiness of $\Gamma_{y_0}$ under assumptions (a)--(c). Indeed, by Theorem \ref{ThLagMult11}, the conclusion holds since \eqref{eq:inclusion-henig_inicio} follows from \eqref{InterseccionTeo_b'} and the fact that $Y_+$ is pointed (by assertion (b) in Theorem \ref{teor:TML}). Therefore, Theorem~\ref{teor:TML} will be proved once we establish the following result.

\begin{proposition} \label{prop:proposicion_01_sin_core}
Let $y_0 \in \mathrm{ND}(P(0_Z))$ and assume:
\begin{itemize}
\item[(a)] The set-valued map \(V: U \rightrightarrows Y, \quad V(z) := F \circ G^{-1}(z)\) is Lipschitzian at $0_Z\in U$.
\item[(b)] The cone $Y_+$ has a bounded base.
\item[(c)] There exists $0 < \delta < 1$ such that 
\[
V(0) \cap \big(y_0 - (Y_+)_{\delta}\big) \subset \{y_0\}.
\]
\end{itemize}
Then $\Gamma_{y_0} \neq \varnothing$.
\end{proposition}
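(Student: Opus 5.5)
We construct $\Delta$ explicitly as a conic enlargement of the ordering cone whose size grows linearly in $\|z\|$, with slope large compared to the Lipschitz constant of $V$. Fix $L>L_{V,0_Z}$. By (a),
\[
V(z)\subset V(0_Z)+L\|z\|B_Y \quad\text{for all } z\in U .
\]
Set $C:=Y_+$ and choose a small dilation parameter $\mu\in(0,\delta)$, say $\mu=\delta/4$ so that Lemma~\ref{lem:C_epsilon_mu_fdo}(vi) is available. Let $B$ be a bounded base of $C$. By Lemma~\ref{lem:base_acotada_dilation_Henig} (after shrinking $\mu$ if needed) we may work with a closed convex cone $K$ satisfying
\[
C\subset K\subset C_{\mu},
\]
where $K$ has a bounded base; one can take the Henig dilation $C_{(B,\eta)}$ for small $\eta$, which is convex and contained in $C_\mu$ because $B$ is bounded away from $0$. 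For a large constant $M$ to be fixed later, define
\[
\operatorname{Gph}(\Delta):=\{(z,y)\in Z\times Y:\ y\in K+M\|z\|\,b_0 \text{ for some fixed } b_0\in B\}.
\]
More robustly, we take $\operatorname{Gph}(\Delta):=\operatorname{cl}\operatorname{cone}\bigl(\{0_Z\}\times K \,\cup\, (B_Z\times \{M b_0\})\bigr)$. In either form the graph is a closed convex cone and $\operatorname{Dom}\Delta=Z$.

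The conditions of Assumption~\ref{hipotesisGeneral} are then checked one at a time.
\begin{itemize}
\item[(b)] We need $\Delta(0_Z)=K$. This gives $C\subset K$, and $(-C)\cap K=\{0_Y\}$ because $K$ has a base and is therefore pointed.
\item[(a)] A core point of the graph should be of the form $(0_Z,Mb_0+k)$ with $k$ interior-like. This is the delicate part, since $Y_+$ may have empty core, as in $\ell^2$. To handle it, we enlarge the $Y$-part by a full ball scaled with $\|z\|$ and a constant: the graph contains $(0_Z,b_0)+\{(z,y):\|z\|\le r,\ \|y\|\le r\}$ provided $K$ contains a $\|\cdot\|$-neighbourhood of $b_0$. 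The Henig cone $C_{(B,\eta)}$ does contain the ball $b_0+\eta B_Y$, which yields an interior point of $\operatorname{Gph}(\Delta)$ and hence a core point.
\item[(c)] Suppose $(z,y)$ lies in both $\operatorname{Gph}(-\Delta)$ and $\operatorname{Gph}(V)-(0,y_0)$. Then $y+y_0\in V(z)\subset V(0)+L\|z\|B_Y$, so there is $v\in V(0)$ with $\|y+y_0-v\|\le L\|z\|$. We also have $-y\in\Delta(z)$, so $-y$ is roughly $k+M\|z\|b_0$ with $k\in K$ (up to the ball term). Hence
\[
v-y_0 = -\bigl(k+M\|z\|b_0\bigr)+w,\qquad \|w\|\le L\|z\|.
\]
For $M\gg L/\eta$, the vector $k+M\|z\|b_0-w$ stays in a $\delta$-dilation of $C$. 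This uses Lemma~\ref{lem:C_epsilon_mu_fdo}(iii),(vi) to absorb the dilations: a point of $K\subset C_\mu$ perturbed by a relatively small multiple of its norm lies in $C_{3\mu}\subset (Y_+)_\delta$. Therefore $v\in V(0)\cap(y_0-(Y_+)_\delta)$, and (c) of the proposition gives $v=y_0$. Consequently $k+M\|z\|b_0=w$. Since $\|k+M\|z\|b_0\|\ge c\,M\|z\|$ (a bounded-base/pointedness estimate: the norm of a sum in $K$ is comparable to $\sum$ of base coefficients, with constant $\delta_B/\sigma_B$), and $\|w\|\le L\|z\|$, choosing $M$ with $cM>L$ forces $z=0_Z$. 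Then $k=0$ and $y=0$.
\end{itemize}

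The main obstacle is the quantitative step in (c). There we must show that $k+M\|z\|b_0-w$ lies in $(Y_+)_\delta$, together with the lower norm bound $\|k+tb_0\|\ge c(\|k\|+t)$ for $k\in K$. Both rest on the bounded base of $K$, which is why Lemma~\ref{lem:base_acotada_dilation_Henig} is needed. The plan is to express $K$-elements as $\lambda b$ with $b$ in the bounded base $B'$, and to use a separating functional $g$ with $g\ge\gamma$ on $B'$ to get $\|k+tb_0\|\ge g(k+tb_0)/\|g\|\ge c(\|k\|+t)$. The dilation membership then follows from $\|w\|\le (L/(cM))\,\|k+tb_0\|$ together with Lemma~\ref{lem:C_epsilon_mu_fdo}(iii).
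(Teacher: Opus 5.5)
Your proof is correct. It shares the paper's architecture: replace $Y_+$ by a Henig dilation $K$ that has a bounded base and nonempty interior and sits inside a small conic neighbourhood of $Y_+$; build a closed convex process with $\Delta(0_Z)=K$ whose fibres are pushed away from the origin linearly in $\|z\|$; then combine the Lipschitz inclusion with hypothesis (c) to get condition (c) of Assumption~\ref{hipotesisGeneral}.

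You differ in the process and in how (c) is checked. The paper takes $\operatorname{Gph}(\Delta)=\operatorname{cl}\operatorname{cone}(B_Z\times B)$, with $B$ a base of $Y'_+$ rescaled in Lemma~\ref{lema:lema01} so that every ball $B_Y(-\lambda b,2L\lambda)$ lies in $-(Y'_+)_{\delta'}\setminus\{0_Y\}$. This gives, in one estimate, $d(\hat y,V(0_Z)-y_0)\ge 2L\|\hat z\|$ on $\operatorname{Gph}(-\Delta)\setminus\{0_{Z\times Y}\}$, which contradicts the Lipschitz inclusion. You take the translated cone $\Delta(z)=K+M\|z\|b_0$ and argue in two steps:
\begin{itemize}
\item[1.] The Lipschitz point $v\in V(0_Z)$ gives $y_0-v=u-w$ with $u\in K$ and $\|w\|\le (L/(cM))\|u\|$. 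Hence $y_0-v\in(Y_+)_\delta$, and nondegeneracy forces $v=y_0$.
\item[2.] Then $u=w$, and $\|u\|\ge cM\|z\|$ with $cM>L$ forces $z=0_Z$.
\end{itemize}

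What your route buys: convexity, closedness, properness and $\Delta(0_Z)=K$ are immediate. The interior point $(0_Z,b_0)$ comes directly from $b_0\in\operatorname{int}K$, with no core-convex-topology argument as in Proposition~\ref{Claim3}(ii)(d). You also get $K\subset(Y_+)_\mu$ by hand from $\delta_B>0$ (it holds once $\eta<\mu\,\delta_B$) rather than citing an external lemma. The price is an auxiliary strictly positive functional for the lower bound $\|k+tb_0\|\ge ct$, plus the dilation calculus to absorb a relative perturbation.

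What the paper's route buys: the geometry is packaged once in Lemma~\ref{lema:lema01}, and it yields a reusable construction (Proposition~\ref{Claim3}, Remark~\ref{remark:conclusion_Delta_con_definicion}). That construction is the literal vector analogue of the scalar penalty $\operatorname{cone}(B_Z\times\{\mu\})=\operatorname{epi}(\mu\|\cdot\|)$ used in Theorem~\ref{teor:TML_real_single_valued}. In the scalar case the two constructions coincide, both giving $\Delta(z)=[c\|z\|,+\infty)$.

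A few points to tighten in a write-up:
\begin{itemize}
\item[(i)] With your first definition the graph is already closed and $-y=k+M\|z\|b_0$ holds exactly. Drop the hedges ``roughly'' and ``up to the ball term'', and drop the remark about enlarging the $Y$-part. The point $(0_Z,b_0)$ is interior simply because $(1-M\|z\|)b_0+y\in K$ whenever $\|z\|\le 1/(2M)$ and $\|y\|\le\eta/2$.
\item[(ii)] Your ``more robust'' second form is convex only if $\operatorname{cone}$ means the convex conic hull, in which case it equals the first form.
\item[(iii)] To absorb the perturbation you need Lemma~\ref{lem:C_epsilon_mu_fdo}(iv) as well as (iii): first pass from $K\subset(Y_+)_\mu$ to $K\subset(Y_+)^{\circ}_{\mu'}$ with $\mu<\mu'$, then require $\mu'+2L/(cM)\le\delta$.
\item[(iv)] Set aside the case $u=0_Y$ (where $w=0_Y$) before normalizing.
\item[(v)] The functional for the lower bound can be obtained by separating $0_Y$ from the closed bounded convex set $B_\eta$, so you do not actually need a bounded base of $K$.
\end{itemize}
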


We next present some technical results that will be instrumental in the proof of Proposition~\ref{prop:proposicion_01_sin_core}.

\begin{lema}  \label{lema:lema01}
Let $C \subset Y$ be a cone with a bounded base, let $A \subset Y$ be a nonempty set, and $L>0$. Suppose that there exist $\bar{y} \in Y$ and $0<\delta<1$ such that
\begin{equation}\label{condicion01}
[A-\bar{y}] \cap [-C_{\delta}] \subset \{0_Y\}.
\end{equation}
Then there exists a bounded base $B$ of $C$ such that
\begin{equation}\label{eq01}
d(-\lambda b, A-\bar{y}) \geq 2L\lambda,\qquad \text{for all } b\in B, \; \lambda>0.
\end{equation}
\end{lema}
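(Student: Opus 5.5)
The plan is to take an arbitrary bounded base and rescale it so that all its elements are long, namely of norm at least $2L/\delta$. Then any point of $A-\bar y$ lying within distance $2L\lambda$ of $-\lambda b$ is forced into the dilated cone $-C_\delta$, which contradicts \eqref{condicion01}.

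\emph{Construction of $B$.} Let $B_0$ be a bounded base of $C$. Since $0_Y\notin\operatorname{cl}(B_0)$, we have $\delta_{B_0}=\inf_{b\in B_0}\|b\|>0$, and $\sigma_{B_0}<\infty$. Put $t:=2L/(\delta\,\delta_{B_0})>0$ and $B:=tB_0$. Then $B$ is again a bounded base of $C$:
\begin{itemize}
\item it is convex and bounded;
\item $0_Y\notin\operatorname{cl}(B)=t\operatorname{cl}(B_0)$;
\item each $y\in C\setminus\{0_Y\}$ is written uniquely as $y=(\lambda_y/t)(tb_y)$.
\end{itemize}
Moreover $\|b\|\ge t\,\delta_{B_0}=2L/\delta$ for every $b\in B$.

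\emph{Verification of \eqref{eq01}.} Suppose, to the contrary, that for some $b\in B$ and $\lambda>0$ there is $a\in A-\bar y$ with $\|a+\lambda b\|<2L\lambda$. Set
\[
u:=b/\|b\|\in C\cap S_Y,\qquad w:=\frac{-a-\lambda b}{\lambda\|b\|}.
\]
Then $-a=\lambda\|b\|(u+w)$ and $\|w\|<2L/\|b\|\le\delta$. Hence $d(u+w,C\cap S_Y)\le\|w\|\le\delta$, so $u+w\in S_{(C,\delta)}$. It follows that $-a\in\operatorname{cone}(S_{(C,\delta)})=C_\delta$, that is, $a\in (A-\bar y)\cap(-C_\delta)$. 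By \eqref{condicion01}, $a=0_Y$. Substituting $a=0_Y$ into the assumed inequality gives $\lambda\|b\|<2L\lambda$, so $\|b\|<2L$. This is impossible, because $\|b\|\ge 2L/\delta>2L$ (recall $\delta<1$). Therefore $\|a+\lambda b\|\ge 2L\lambda$ for all $a\in A-\bar y$, and taking the infimum over $a$ yields \eqref{eq01}.

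\emph{Main obstacle.} The only delicate point is the choice of scaling. It has to serve two purposes at once: push every $u+w$ into $S_{(C,\delta)}$, and exclude the degenerate case $a=0_Y$ that \eqref{condicion01} permits. The threshold $\|b\|\ge 2L/\delta$ with $\delta<1$ handles both, which is why the argument relies on $\delta_{B_0}>0$, guaranteed by the definition of a base. Boundedness of $B_0$ is used only to ensure that the rescaled $B$ is still bounded.
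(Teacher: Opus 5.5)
Your proof is correct and takes essentially the same route as the paper: both rescale a bounded base so that every $b$ satisfies $\|b\|\ge 2L/\delta$ (the paper's $B=\frac{2L}{\delta}\cdot\frac{1}{\delta_{B_1}}B_1$ is exactly your $tB_0$). Consequently the ball of radius $2L\lambda$ about $-\lambda b$ lies in $-(C_\delta\setminus\{0_Y\})$ and therefore misses $A-\bar y$. The only difference is presentation: the paper states this as an inclusion of closed balls, whereas you argue pointwise by contradiction and rule out $a=0_Y$ via $\|b\|>2L$, which is the same observation that the ball avoids the origin.
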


\begin{proof} 
Pick  $B_1$ a bounded base of $C$. Then the set  $B_2:=\frac{1}{\delta_{B_1}}B_1$ is also a base of $C$, and  $\| b \| \geq 1$ for every $b \in B_2$.
Therefore,  for every $b \in B_2$ we have that 
\[
\frac{1}{\| b \|} B_Y(b, \delta) = B_Y\left( \frac{b}{\| b \|}, \frac{\delta}{\| b \|}\right)\subseteq  B_Y\left( \frac{b}{\| b \|},  \delta \right)\subseteq \left( S_Y\cap C\right) + \delta B_Y
\]
and consequently 
\begin{equation}\label{inclusion11}
B_Y(b, \delta) \subset C_{\delta} \setminus \{0_Y\}
\end{equation}
for every $b \in B_2$.
\[
\frac{2L}{\delta} \, B_Y(b, \delta) \subset   C_{\delta} \setminus \{0_Y\}
\]
and then,  for every $b \in B_2$ we get
\[
C_{\delta} \setminus \{0_Y\} \supset \frac{2L}{\delta} \, B_Y(b, \delta) = B_Y\left( \frac{2L}{\delta} \, b, \frac{2L}{\delta}  \delta\right)=
 B_Y\left( \frac{2L}{\delta} \, b, 2L \right)
\]
 Now,
since $(A- \bar{y})\cap [-C_{\delta}]\subset \{ 0_Y \}$ we obtain that 
\begin{equation} \label{ecuacion2}
B_Y\left( -\frac{2L}{\delta} \, b, 2L \right) \cap (A-\bar{y})=\varnothing.
\end{equation}
Finally, taking  $B:=\frac{2L}{\delta}  \, B_2$ we attain the base we are looking for, and from (\ref{ecuacion2}) we get that
\[
B_Y\left( - b, 2L\right) \cap (A-\bar{y})=\varnothing
\]
for any $b \in B$. Now, again because  $C_{\delta}$ is a cone,  for any $\lambda>0$ we have that 
\[
B_Y\left( \lambda b, 2L\lambda \right) \subset C_{\delta} \setminus  \{0_Y\} 
\]
and since
\[
  -C_{\delta}  \cap (A-\bar{y})\subset\{0_Y\}
\]
we get that
\[
B_Y\left( -\lambda  b, 2L\lambda \right) \cap (A-\bar{y})=\varnothing
\]
for every $b \in B $ and $\lambda >0$. 
We conclude the proof  noting that it is straightforward to check that the obtained base $B$ is bounded. 
\end{proof}

\begin{proposition} \label{Claim3}
Let \(C \subset Y\) be a convex cone with a bounded base, let \(H:Z \rightrightarrows Y\) be a set-valued map Lipschitzian at $0_Z$, and let \(\bar{y} \in Y\).  
Assume that there exists \(0<\delta<1\) such that
\begin{equation}\label{condicion01_bis}
[H(0_Z)-\bar{y}] \cap (-C_\delta) \subset \{0_Y\}.
\end{equation}
Then the following assertions hold:
\begin{itemize}
    \item[(i)] There exists a bounded base \(B\) of \(C\) such that
the set-valued map \(\Delta: Z \rightrightarrows Y\) defined by
    \begin{equation}\label{def_proceso}
    \operatorname{Gph}(\Delta)
    := \operatorname{cl}\, \operatorname{cone}(B_Z \times B),
    \end{equation}
    belongs to \(P(Z,Y)\) and 
    \begin{equation}\label{ecuacion6_bis_bis}
    \operatorname{Gph}(-\Delta)
    \cap \bigl(\operatorname{Gph}(H) - (0_Z,\bar{y})\bigr)
    \subset \{0_{Z\times Y}\}.
    \end{equation}
    \item[(ii)] The set-valued map \(\Delta\) also satisfies: 
    \begin{itemize}
        \item[(a)] \(C \subset \Delta(0_Z)\).
        \item[(b)] \(( -C ) \cap \Delta(0_Z)=\{0_Y\}\).
        \item[(c)] \(\|\Delta\|<\infty\). 
        \item[(d)] If $\operatorname{core}(C)\not = \varnothing$, then \(\operatorname{core}(\operatorname{Gph}(\Delta))\neq\varnothing\). 
    \end{itemize}
\end{itemize}
\end{proposition}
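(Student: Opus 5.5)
Throughout, let $L>L_{H,0_Z}$ be a Lipschitz constant of $H$ at $0_Z$. We apply Lemma~\ref{lema:lema01} with $A:=H(0_Z)$, $C$, $\bar y$, $\delta$ and this $L$. This gives a bounded base $B$ of $C$ satisfying \eqref{eq01}. We then define $\Delta$ by \eqref{def_proceso}.

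We first check the structural properties of $\Delta$. The set $\operatorname{cone}(B_Z\times B)$ is a convex cone, because $B_Z\times B$ is convex and the conic hull of a convex set is convex. Its closure is therefore a closed convex cone. Properness holds because, for any $z\in Z$ and any $b\in B$, we have $(z,\|z\| b)\in\operatorname{cone}(B_Z\times B)$ (use $z/\|z\|$ when $z\neq 0$).

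For (ii)(a), note that $(0_Z,b)\in B_Z\times B$, so $C=\operatorname{cone}B\subset\Delta(0_Z)$.

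For (ii)(c), the same point gives $d(0_Y,\Delta(z))\le \|z\|\sigma_B$, so $\|\Delta\|\le\sigma_B<\infty$.

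For (ii)(b), take a functional $f\in C^{\#}$ with $\inf_B f=:\eta>0$, which exists since $B$ is bounded and $0\notin\operatorname{cl}B$. Now let $(z_n,y_n)=t_n(u_n,b_n)\to(0,y)$ with $u_n\in B_Z$ and $b_n\in B$. If $y\neq0$, boundedness of $B$ forces $t_n$ to stay bounded and away from $0$. Then $t_nu_n\to0$ gives nothing new, while $f(y)=\lim t_nf(b_n)\ge \liminf t_n\,\eta>0$. Any $y\in -C\setminus\{0\}$ has $f(y)<0$, so $(-C)\cap\Delta(0_Z)=\{0\}$.

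For (ii)(d), pick $c\in\operatorname{core}(C)$, scaled to lie in $\operatorname{cone}B$, and show that $(0_Z,c)$ lies in the core of the graph. Given a direction $(\bar z,\bar y)$, write $(\zeta\bar z,c+\zeta\bar y)$ as $(\zeta\bar z,\zeta\|\bar z\|b)+(0,c+\zeta\bar y-\zeta\|\bar z\|b)$. The first term lies in the graph. For small $\zeta$ the second term lies in $\{0\}\times C$ because $c$ is in the core of $C$. Convexity then closes the argument.

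The main step is \eqref{ecuacion6_bis_bis}. Suppose $(z,y)\neq0$ with $(z,-y)\in\operatorname{Gph}(\Delta)$ and $y+\bar y\in H(z)$.

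Step 1. For elements of $\operatorname{cone}(B_Z\times B)$ we have $-y=\lambda b+w$ with $\lambda\ge\|z\|$ and $w=0$. Passing to the closure, $-y$ is a limit of points $\lambda_n b_n$ with $\lambda_n\ge\|z_n\|$ and $z_n\to z$.

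Step 2. By the Lipschitz property, $d(y+\bar y,H(0_Z))\le L\|z\|$, so $d(y,H(0_Z)-\bar y)\le L\|z\|$.

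Step 3. By \eqref{eq01}, $d(-\lambda_nb_n,H(0_Z)-\bar y)\ge2L\lambda_n\ge 2L\|z_n\|$. Letting $n\to\infty$, and using that distance is $1$-Lipschitz, we get $d(y,H(0_Z)-\bar y)\ge\limsup 2L\lambda_n\ge2L\|z\|$.

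Combining, $2L\|z\|\le L\|z\|$, so $z=0$. In that case $y\in -\Delta(0_Z)$ and $y\in H(0_Z)-\bar y$.

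The hardest part is this degenerate case $z=0$: the Lipschitz bound degenerates, and $\lambda_n$ may tend to $0$ while $\|z_n\|\to0$. We handle it directly. Since $y\neq0$, $\lambda_n$ is bounded away from $0$, using $\|b_n\|\ge\delta_B$. Hence $d(y,H(0_Z)-\bar y)\ge 2L\liminf\lambda_n>0$, which contradicts $y\in H(0_Z)-\bar y$. The same positivity argument works for $z\neq0$, provided we track that $\lambda_n\ge\|z_n\|$ survives the limit. This limit-passage through the closure is the delicate point.
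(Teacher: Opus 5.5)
Your proof is correct. For the central inclusion \eqref{ecuacion6_bis_bis} it follows the paper's argument: Lemma~\ref{lema:lema01} applied with $A=H(0_Z)$, the observation that points of $\operatorname{cone}(B_Z\times B)$ have $\lambda\ge\|z\|$, and the clash between $d(y,H(0_Z)-\bar y)\ge 2L\|z\|$ and the Lipschitz bound $d(y,H(0_Z)-\bar y)\le L\|z\|$. The paper phrases this last step with a $\tfrac32 L$ buffer instead.

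You also make explicit the closure step that the paper dispatches ``by continuity of the norm''. At limit points with $z=0_Z$ the bound $2L\|z_n\|\to 0$ is useless, and one genuinely needs $2L\lambda_n$ with $\lambda_n$ bounded below, as you say. However, your justification of that lower bound is inverted: $\|b_n\|\ge\delta_B$ bounds $\lambda_n$ from \emph{above}. The lower bound comes from boundedness of $B$:
\[
\lambda_n\ge\frac{\|\lambda_n b_n\|}{\sigma_B}\to\frac{\|y\|}{\sigma_B}>0,
\]
which is exactly the reasoning you used correctly in (ii)(b).

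The auxiliary items take different routes from the paper.

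\begin{itemize}
\item[(ii)(b)] The paper shows that $\operatorname{Gph}(\Delta)=\operatorname{cone}(\operatorname{cl}(B_Z\times B))$ is well based, hence pointed, and reads off $(-C)\cap\Delta(0_Z)=\{0_Y\}$. Your separating functional $f\in C^{\#}$ with $\inf_B f>0$ is more elementary. It can even be shortened: $f\ge 0$ on $\Delta(0_Z)$ by continuity, while $f<0$ on $-C\setminus\{0_Y\}$, so no lower bound on $t_n$ is needed.
\item[(ii)(d)] The paper works in the core convex topology $\tau_c$ and builds a $\tau_c$-open neighbourhood of $(0_Z,\bar y_b)$ inside $\operatorname{cone}(B_Z\times B)$, using the constants $\delta_B$ and $\sigma_B$. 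Your decomposition
\[
(\zeta\bar z,\,c+\zeta\bar y)=(\zeta\bar z,\,\zeta\|\bar z\|b)+\bigl(0_Z,\,c+\zeta(\bar y-\|\bar z\|b)\bigr)
\]
verifies $(0_Z,c)\in\operatorname{core}(\operatorname{Gph}(\Delta))$ directly from the definition of the core and the additivity of the convex cone. It needs no topology and no boundedness of $B$, so it is a genuine simplification.
\end{itemize}
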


\begin{proof}
(i) To simplify notation, we define the set-valued mapping
\[
H_{\bar{y}}(z) := H(z) - \bar{y}, \quad \forall z \in Z.
\]
Hence,
\[
\operatorname{Gph}(H_{\bar{y}}) = \operatorname{Gph}(H) - (0,\bar{y}).
\]
There exists a constant $L>0$ such that $H$ is Lipschitzian at $0_Z$ with constant $L$. Then the mapping $H_{\bar y}$ is also Lipschitzian at $0_Z$ with the same constant $L$. Now, by \eqref{condicion01_bis}
and Lemma \ref{lema:lema01}, there exists a bounded base $B$ of $C$ such that
\begin{equation}\label{ecuacion3}
d(-\lambda b, H_{\bar{y}}(0_Z)) \geq 2L\lambda , \quad \forall b \in B,\, \forall \lambda>0.
\end{equation}
Now define the set-valued map \(\Delta : Z \rightrightarrows Y\) using formula \eqref{def_proceso}. 
By construction, \(\Delta\) is closed. Since \(B_Z \times B\) is convex (as both \(B_Z\) and \(B\) are convex), \(\Delta\) is convex as well. Moreover, \(\operatorname{dom}(\Delta)=Z\), because \(\pi_1(B_Z \times B)=B_Z\), where \(\pi_1\) denotes the projection onto \(Z\). To show that \(\Delta\) is pointed, observe first that \(0_{Z\times Y} \notin \operatorname{cl}(B_Z \times B)\) and that \(\operatorname{cl}(B_Z \times B)\) is bounded. Then, by \cite[Lemma~2.1.43]{GoeRiaTamZal2023},
\[
\operatorname{Gph}(\Delta)
= \operatorname{cl}\, \operatorname{cone}(B_Z \times B)
= \operatorname{cone}\bigl(\operatorname{cl}(B_Z \times B)\bigr).
\]
Hence, \(\operatorname{Gph}(\Delta)\) is a well-based cone (see \cite[Definition~2.1.42]{GoeRiaTamZal2023}), and by the comment following that definition, every well-based cone is pointed. Therefore, \(\operatorname{Gph}(\Delta)\) is pointed.

Now we verify \eqref{ecuacion6_bis_bis}.  
From
\begin{equation}\label{eq_menos_Delta}
\operatorname{Gph}(-\Delta)
   = \operatorname{cl}\,\operatorname{cone}(B_Z \times (-B)),
\end{equation}
it suffices to prove that
\begin{equation}\label{ecuacion6}
\Bigl( \operatorname{cl}\,\operatorname{cone}(B_Z \times (-B))
       \setminus \{0_{Z\times Y}\}\Bigr)
\;\cap\;\bigl( \operatorname{Gph}(H)-(0_Z,\bar y) \bigr)=  \varnothing.   
\end{equation}
Take first \((\hat z,\hat y)\in \operatorname{cone}(B_Z \times (-B))\setminus\{0_{Z\times Y}\}\).  
Then \((\hat z,\hat y)=\lambda (z',-b')\) for some \((z',-b')\in B_Z\times (-B)\) and \(\lambda>0\).  
Using \eqref{ecuacion3} and \(\|z'\|\le 1\),
\[
d(\hat y,H_{\bar y}(0_Z))
= d(-\lambda b',H_{\bar y}(0_Z))
\ge 2L\lambda
\ge 2L\lambda\|z'\|
= 2L\|\hat z\|.
\]
Hence,
\begin{equation}\label{ecuacion4}
\hat y \notin H_{\bar y}(0_Z) + \tfrac{3}{2}L\|\hat z\|\,B_Y .
\end{equation}
By continuity of the norm, \eqref{ecuacion4} also holds for all  
\((\hat z,\hat y)\in \operatorname{cl}\operatorname{cone}(B_Z \times (-B))\setminus\{0_{Z\times Y}\}\).

Now take \((\hat z,\hat y)\in \operatorname{Gph}(H)-(0_Z,\bar y)=\operatorname{Gph}(H_{\bar y})\).  
Since \(H_{\bar y}\) is Lipschitzian at $0_Z$ with constant \(L\),
\[
\hat y\in H_{\bar y}(\hat z)
   \subset H_{\bar y}(0_Z) + L\|\hat z\|\, B_Y
   \subset H_{\bar y}(0_Z) + \tfrac{3}{2}L\|\hat z\|\, B_Y .
\]
This contradicts \eqref{ecuacion4}, proving \eqref{ecuacion6}.  

\noindent (ii) (a) Since
\begin{equation}\label{eq:prop_nueva_CcontenidoDelta}
\{0_Z\}\times B \subset \operatorname{cl}\,\operatorname{cone}(B_Z \times B) = \operatorname{Gph}(\Delta),
\end{equation}
we obtain
\[
\operatorname{cone}(\{0_Z\}\times B)
= \{0_Z\}\times \operatorname{cone}(B)
\subset \operatorname{Gph}(\Delta).
\]
Hence,
\[
C = \operatorname{cone}(B) \subset \Delta(0_Z).
\]

\noindent (b) By assertion (i), the cone \(\operatorname{Gph}(\Delta)\subset Z\times Y\) is pointed.  
In particular, $\Delta(0_Z) \subset Y$ is also pointed, and this implies
\[
(-\Delta(0_Z))\cap \Delta(0_Z)=\{0_Y\}.
\]
Therefore,
\[
(-C)\cap \Delta(0_Z)\subset (-\Delta(0_Z))\cap \Delta(0_Z)=\{0_Y\}.
\]

\noindent (c) By \eqref{eq:prop_nueva_CcontenidoDelta}, for every \(z\in S_Z\) we have \(B\subset \Delta(z)\).  
Then, using \eqref{def:norma_proceso}, 
\[
\|\Delta\|
= \sup_{z\in S_Z} d(0_Y,\Delta(z))
\leq \inf_{b\in B} \|b\|
<\infty.
\]

\noindent (d) Now, let us check that $\operatorname{core}(\operatorname{Gph}(\Delta)) \neq \varnothing$. Fix some $\bar{y} \in \operatorname{core}(C)$. Since $B$ is a base for $C$, there exists $\bar{y}_b \in B$ and $\lambda_{\bar{y}} > 0$ such that $\bar{y} = \lambda_{\bar{y}} \bar{y}_b$. Clearly, $\bar{y}_b \in \operatorname{core}(C)$. We aim to show that $(0_Z, \bar{y}_b) \in \operatorname{core}(\operatorname{Gph}(\Delta))$. Recall from \cite[Section 6.3]{KTZ2015} that every real linear space $X$ can be endowed with the core convex topology $\tau_c(X)$, the strongest locally convex topology satisfying $\operatorname{int}_{\tau_c}(A) = \operatorname{core}(A)$ for every convex set $A \subset X$. Thus, it suffices to find $\mathcal{V}\in \tau_c(Z\times Y)$ such that $(0_Z,\bar{y}_b)\in \mathcal{V}\subset \operatorname{cone}(B_X\times B)$. By \cite[Proposition 6.3.1 (iii)]{KTZ2015}, pick a convex set $\mathcal{U}\in \tau_c(Y)$ with $\bar{y}_b \in \mathcal{U} \subset C$. Fix $0 < \alpha < \frac{\delta_B}{2}$. Since $\operatorname{int} B_Y(\bar{y}_b, \alpha) = \operatorname{core}(B_Y(\bar{y}_b, \alpha)) \in \tau_c(Y)$ by \cite[Proposition 2.3.2 (iii)]{KTZ2015}, define
\[
\mathcal{U}' := \mathcal{U} \cap \operatorname{core}(B_Y(\bar{y}_b, \alpha)).
\]
Then $\bar{y}_b \in \mathcal{U}' \subset C$, $\mathcal{U}' \in \tau_c$, and $\mathcal{U}'$ is convex. For every $y \in \mathcal{U}'$, we have $\| y \| > \frac{\delta_B}{2}$ since
\[
\| y \| = \| \bar{y}_b -(y-\bar{y}_b) \| \geq \| \bar{y}_b \| -\|y-\bar{y}_b \| > \delta_B-\frac{\delta_B}{2}=\frac{\delta_B}{2},
\]
where the last inequality uses $\|\bar{y}_b\|\geq \delta_B$ and $\|y-\bar{y}_b \| < \alpha < \frac{\delta_B}{2}$. For each $y \in \mathcal{U}'$, write $y= \lambda_{y} y_{b}$ with $y_{b} \in B$ and $\lambda_y>0$. Then $\|y\|= \lambda_{y} \|y_{b}\|$, and using $\frac{\delta_B}{2} < \|y\|\leq 2 \sigma_B$ and $\delta_B \leq \|y_b\|\leq \sigma_B$, we obtain $\lambda_{y} > \frac{\delta_B}{2 \sigma_B}$. Set $M:=\min \left\{ 1, \frac{\delta_B}{2\sigma_B } \right\}>0$. By \cite[Proposition 2.3.2 (iii), Proposition 6.3.2 (ii)]{KTZ2015},
\[
(0_Z, \bar{y}_b )\in \operatorname{core}(B_Z(0_Z,M)) \times \mathcal{U}'= \operatorname{core}(B_Z(0_Z,M)\times \mathcal{U}')\in \tau_c(Z \times Y).
\]
To complete the proof, we show $\operatorname{core}(B_Z(0_Z,M)) \times \mathcal{U}' \subset \operatorname{cone}(B_Z \times B)$. Fix $(z,y) \in \operatorname{core}(B_Z(0_Z,M)) \times \mathcal{U}'$. Write $y= \lambda_{y} y_{b}$ with $y_b \in B$ and $\lambda_y > \frac{\delta_B}{2\sigma_B}$. Then $(z,y)=\lambda_y(\frac{z}{\lambda_y},y_b)$. Since $\|z\|<M \leq \frac{\delta_B}{2\sigma_B}<\lambda_y$, we have $\left\|\frac{z}{\lambda_y}\right\|<1$, so $(\frac{z}{\lambda_y},y_b)\in B_Z\times B$, completing the proof.
\end{proof}

\begin{proof}[Proof of Proposition \ref{prop:proposicion_01_sin_core}]
Set $\delta':=\delta/4$ and choose $0<\rho<\delta'$. Since $Y_+$ has a bounded base, by \cite[Lemma~3.11]{GarciaCastanoMelguizo2024} there exists a bounded base $B$ of $Y_+$ such that 
\[
(Y_+)_{(B,\nu)} \subset (Y_+)_{(B,\rho)} \subset (Y_+)_{\rho}
\quad\text{for all } 0<\nu\le\rho.
\]
Let $\mu:=\rho/2$ and set $Y'_+:=(Y_+)_{(B,\mu)}$. By Lemma~\ref{lem:base_acotada_dilation_Henig}, we may assume that $Y'_+$ has a bounded base. Then $Y'_+$ is a convex cone with bounded base and
\[
Y_+ \subset Y'_+ \subset (Y_+)_{\rho}, 
\qquad
\operatorname{int}(Y'_+)=\operatorname{core}(Y'_+) \neq \varnothing.
\]
Using these inclusions and Lemma~\ref{lem:C_epsilon_mu_fdo} (vi), we obtain
\[
Y_+ \subset Y'_+ \subset (Y_+')_{\delta'} 
   \subset ((Y_+)_{\rho})_{\delta'}
   \subset ((Y_+)_{\frac{\delta}{4}})_{\frac{\delta}{4}}
   \subset (Y_+)_{\delta}.
\]
By the assumption of the proposition,
\[
[V(0)-y_0] \cap (-(Y_+)_{\delta}) = \{0_Y\}.
\]
Hence, using the above inclusions,
\[
[V(0)-y_0] \cap (-(Y'_+)_{\delta'}) = \{0_Y\}.
\]
Applying Proposition \ref{Claim3} to $Y'_+$, $V=F\circ G^{-1}$, $y_0$ and $\delta'$, we obtain $\Delta\in P(Z,Y)$ such that  
\[
\operatorname{core}(\operatorname{Gph}(\Delta))\neq\varnothing,
\qquad
\operatorname{Gph}(-\Delta)\cap [\operatorname{Gph}(V)-(0,y_0)]
   =\{0_{Z\times Y}\},
\]
\[
Y'_+ \subset \Delta(0_Z),
\qquad
(-Y'_+)\cap \Delta(0_Z)=\{0_Y\}.
\]
Finally, since $Y_+\subset Y'_+$, also
\[
Y_+ \subset \Delta(0_Z),
\qquad
(-Y_+)\cap \Delta(0_Z)=\{0_Y\},
\]
so $\Delta\in\Gamma_{y_0}$ and therefore $\Gamma_{y_0}\neq\varnothing$.
\end{proof}

As a consequence of the arguments employed in the proofs just established for Propositions \ref{prop:proposicion_01_sin_core} and \ref{Claim3}, we can formulate the following remark, which will be used in the proof of a subsequent result.
\begin{remark}\label{remark:conclusion_Delta_con_definicion}
Let $C\subset Y$ be a convex cone with a bounded base $B$, and define $\Delta:Z\rightrightarrows Y$ by \eqref{def_proceso}. Then $\Delta\in P(Z,Y)$, and assertions \textnormal{(a)}--\textnormal{(d)} in part~\textnormal{(ii)} of Proposition~\ref{Claim3} hold. Moreover, if $\Delta$ also satisfies condition \eqref{ecuacion6_bis_bis}, then the arguments used in the proof of Proposition~\ref{prop:proposicion_01_sin_core} can be readily adapted to show that $\Delta\in \Gamma_{y_0}$.
\end{remark}

\begin{remark}
By the Closed Graph Theorem \cite[Theorem~2.2.6]{aubin1990set} and the Open Mapping Theorem \cite[Theorem~2.2.1]{aubin1990set}, if $F:X\rightrightarrows Y$ and $G:X\rightrightarrows Z$ are closed convex processes between Banach spaces such that $\operatorname{Dom}(F)=X$ and $\operatorname{Im}(G)=Z$, then both $F$ and $G^{-1}$ are Lipschitz continuous. Consequently, their composition $V:=F\circ G^{-1}$ is also Lipschitz continuous. In particular, $V$ is Lipschitzian at $0_Z$.
\end{remark}

Since closed convex processes in set-valued analysis play a role analogous to that of linear operators in classical analysis, optimization programs involving such processes $F$ and $G$ can be regarded as the set-valued counterparts of linear programs. In light of the previous remark, the next theorem fits naturally into this framework under mild assumptions.

\medskip

We again consider the particularly relevant case $Y=\mathbb{R}$ and $Y_+=\mathbb{R}_+$.



\begin{theorem} \label{teor:TML_real}
Assume that $Y = \mathbb{R}$ and $Y_+ = \mathbb{R}_+$. Let
\[
r_0 := \inf \{ F(x) : x \in \Omega, \;  0_Z \in G(x) \},
\]
and assume that $r_0\in \mathbb{R}$. If the set-valued mapping $V=F\circ G^{-1}$ is Lipschitzian at $0_Z$ —for example, when $F$ and $G$ are convex closed processes, $\operatorname{Dom}(F)=X$, $\operatorname{Im}(G)=Z$, and $X$ and $Z$ are Banach spaces— then $\mathcal{S}_{r_0}\neq\varnothing$. As a consequence, every $\varphi\in \mathcal{S}_{r_0}$ is a Lagrange multiplier of $(P(0_Z))$ at $r_0$; that is,  
$r_0$ is the infimum of the program
\begin{equation*}\label{DualPr_r_lipschitz}
\text{Minimize } F(x) + \varphi(G(x)) \quad \text{subject to } x \in \Omega. \tag{$P[\varphi]$}
\end{equation*}
Furthermore, if $r_0$ is a minimum of $(P(0_Z))$ (that is, if $r_0 \in F(x_0)$ for some feasible solution $x_0$), then $r_0$ is also a minimal point of $(P[\varphi])$ (achieved at $x_0$), and
\begin{equation}\label{InterseccionTeo_r}
\varphi(z) \geq  0  \quad \text{for every } z \in G(x_0).
\end{equation}
\end{theorem}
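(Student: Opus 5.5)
The plan is to reduce Theorem~\ref{teor:TML_real} to Theorem~\ref{ThLagMult11_real}. Once we show $\mathcal{S}_{r_0}\neq\varnothing$, every assertion (infimum of $(P[\varphi])$, attainment at $x_0$, and \eqref{InterseccionTeo_r}) is exactly the conclusion of that theorem. By the bijection of Proposition~\ref{proposition:biyeccion_Gamma_Gamma'}, it suffices to show $\Gamma_{r_0}\neq\varnothing$, and for this we intend to invoke Proposition~\ref{prop:proposicion_01_sin_core} with $Y=\mathbb{R}$, $Y_+=\mathbb{R}_+$, $y_0=r_0$. Note that $r_0\in \mathrm{ND}(P(0_Z))$, since $r_0$ is a lower bound of $V(0_Z)$.

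The hypotheses of Proposition~\ref{prop:proposicion_01_sin_core} are checked as follows.
\begin{itemize}
\item[(a)] This is the Lipschitz assumption. In the example case it follows from the preceding remark, via the Closed Graph and Open Mapping Theorems.
\item[(b)] $\mathbb{R}_+$ has the bounded base $\{1\}$.
\item[(c)] We compute $(\mathbb{R}_+)_\delta$ for $0<\delta<1$. Here $S_{\mathbb{R}}\cap\mathbb{R}_+=\{1\}$, so $S_{(\mathbb{R}_+,\delta)}=[1-\delta,1+\delta]\subset(0,\infty)$, and hence $(\mathbb{R}_+)_\delta=\mathbb{R}_+$. Condition (c) then reads $V(0_Z)\cap(r_0-\mathbb{R}_+)\subset\{r_0\}$, which holds because $r_0=\inf V(0_Z)$.
\end{itemize}
Proposition~\ref{prop:proposicion_01_sin_core} therefore yields some $\Delta\in\Gamma_{r_0}$, and $\varphi:=\Upsilon^{-1}(\Delta)\in\mathcal{S}_{r_0}$.

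A more self-contained alternative is to construct $\varphi$ directly, for example $\varphi(z):=2L\|z\|$ with $L>L_{V,0_Z}$ (taking $L>0$). This function is continuous and sublinear. For $z\neq 0_Z$ and $r\in V(z)$, the Lipschitz property gives some $r'\in V(0_Z)$ with $|r-r'|\le L\|z\|$. Hence $r-r_0\ge r'-r_0-L\|z\|\ge -L\|z\|>-2L\|z\|=-\varphi(z)$, which is \eqref{condicion_caso_real}. This gives $\varphi\in\mathcal{S}_{r_0}$ immediately, and it also settles the degenerate case $L_{V,0_Z}=0$, since $L>0$ is chosen freely. I would present this direct argument as the main proof and mention the route through Proposition~\ref{prop:proposicion_01_sin_core} as a remark.

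The main obstacle is minor: one must make sure the domain issue is handled. $V$ is defined only on $U$, while \eqref{condicion_caso_real} quantifies over all $z\in Z\setminus\{0_Z\}$. For $z\notin U$ we must either interpret $F(G^{-1}(z))$ with the Lipschitz inclusion still valid, or read Definition~\ref{def:lipschitz_at_point} with $w$ ranging over the whole of $\operatorname{Dom}(F\circ G^{-1})$. I would state explicitly that the Lipschitzian hypothesis is understood on all $z$ for which $F(G^{-1}(z))\neq\varnothing$, where it is needed; if $F(G^{-1}(z))$ is empty, the condition is vacuous.
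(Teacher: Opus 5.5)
Your proposal is correct. The fallback route you sketch (checking the hypotheses of Proposition~\ref{prop:proposicion_01_sin_core} with $(\mathbb{R}_+)_\delta=\mathbb{R}_+$, pulling back through $\Upsilon$, then applying Theorem~\ref{ThLagMult11_real}) is the paper's own proof, which goes through Theorem~\ref{teor:TML}.

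The argument you plan to present as the main proof differs from the paper in its existence step. The paper obtains an element of $\Gamma_{r_0}$ from the vector-valued machinery (Lemma~\ref{lema:lema01}, Proposition~\ref{Claim3}, Henig dilations and rescaled bounded bases) and transports it through the bijection of Proposition~\ref{proposition:biyeccion_Gamma_Gamma'}. You instead write down $\varphi=2L\|\cdot\|$ and verify \eqref{condicion_caso_real} in one line, using the inclusion $V(z)\subset V(0_Z)+L\|z\|[-1,1]$ and $r_0=\inf V(0_Z)$. The paper performs essentially this computation only later, in Theorem~\ref{teor:TML_real_single_valued} with $\mu\|\cdot\|$ and $\mu>L_{V,0_Z}$. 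Nothing in that computation uses single-valuedness, so it applies here unchanged.

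What each route buys:
\begin{itemize}
\item Your version is elementary and self-contained. It also gives an explicit exact penalty; in fact $2L$ may be replaced by any $\mu>L_{V,0_Z}$.
\item The paper's version presents the scalar result as a direct specialization of Theorem~\ref{teor:TML}, and shows that its nondegeneracy hypothesis (c) is automatic when $Y=\mathbb{R}$.
\item Both routes still need Theorem~\ref{ThLagMult11_real} for the claim about \emph{every} $\varphi\in\mathcal{S}_{r_0}$.
\end{itemize}

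Your domain caveat is well taken, and it is not a weakness specific to your argument. The paper makes the same silent identification of $\operatorname{Gph}(V)$ with all pairs $(z,r)$, $r\in F(G^{-1}(z))$. It does so in the surjectivity part of Proposition~\ref{proposition:biyeccion_Gamma_Gamma'} and in Case~1 of the proof of Theorem~\ref{ThLagMult11}. Without Lipschitz control on all of $\operatorname{Dom}(F\circ G^{-1})$ (or $U=Z$), the values $F(x)$ at points with $G(x)\cap U=\varnothing$ are unconstrained, and the conclusion can fail.
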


\begin{proof}
It is clear that assumptions (a) and (b) of Theorem~\ref{teor:TML} are satisfied, so we only need to verify (c). Since $Y_+=\mathbb{R}_+$, we have $(Y_+)_{\delta}=\mathbb{R}_+$ for every $0<\delta<1$. As $r_0=\inf V(0_Z)$, it follows that 
\[
V(0_Z)\cap [\,r_0-(Y_+)_{\delta}\,]
 = V(0_Z)\cap (-\infty,r_0]
 \subset \{r_0\}.
\]
Therefore, by Theorem~\ref{teor:TML}, we obtain $\Gamma_{r_0}\neq\varnothing$.  
Then, by Proposition~\ref{proposition:biyeccion_Gamma_Gamma'}, we deduce that $\mathcal{S}_{r_0}\neq\varnothing$.  
Finally, by Theorem~\ref{ThLagMult11_real}, every $\varphi\in\mathcal{S}_{r_0}$ satisfies the conclusion of the theorem.
\end{proof}

Next, we continue working in the framework $Y = \mathbb{R}$ and $Y_{+} = \mathbb{R}_{+}$, now focusing on the particular case in which $F$ and $G$ are single-valued mappings rather than set-valued ones. In this setting, we denote them by $f$ and $g$, respectively. Before establishing the corresponding result, we state and prove a technical lemma.

\begin{lema}\label{lem: 3 conos}
Let $Z$ be a normed space, $B_Z=\{z\in Z:\|z\|\le 1\}$, and $\alpha>0$. Then,
\[
\operatorname{cone}(B_Z\times \{\alpha\})=\{(x,r)\in Z\times\mathbb{R} : r\ge\alpha\|x\|\}.
\]
\end{lema}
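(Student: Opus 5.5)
The plan is to prove the two inclusions separately, using only the definition of the conic hull, $\operatorname{cone}(A)=\{\lambda a:\lambda\ge 0,\ a\in A\}$ (with the convention $\mathbb{R}_+$, so that $\lambda=0$ is allowed and $0_{Z\times\mathbb{R}}$ belongs to the cone).

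For the inclusion ``$\subseteq$'': take $(x,r)=\lambda(z,\alpha)$ with $\lambda\ge 0$ and $\|z\|\le 1$. Then $r=\lambda\alpha$ and $\|x\|=\lambda\|z\|\le\lambda$. Hence $\alpha\|x\|\le\lambda\alpha=r$, as required.

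For the inclusion ``$\supseteq$'': take $(x,r)$ with $r\ge\alpha\|x\|$, and split into two cases.
\begin{itemize}
\item If $r=0$, then $\|x\|=0$ because $\alpha>0$. So $(x,r)=0_{Z\times\mathbb{R}}=0\cdot(0_Z,\alpha)$, which lies in the cone.
\item If $r>0$, set $\lambda:=r/\alpha>0$ and $z:=x/\lambda=\alpha x/r$. Then $\|z\|=\alpha\|x\|/r\le 1$, so $z\in B_Z$. Moreover $\lambda(z,\alpha)=(x,r)$, and therefore $(x,r)\in\operatorname{cone}(B_Z\times\{\alpha\})$.
\end{itemize}
Note that $r<0$ cannot occur, since $r\ge\alpha\|x\|\ge 0$.

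No step is a real obstacle. The only point needing care is the degenerate case $r=0$, which is handled by allowing $\lambda=0$ in the conic hull. In particular, no closure is needed: the cone generated by the bounded set $B_Z\times\{\alpha\}$, which stays away from the origin, is already closed, consistent with \cite[Lemma~2.1.43]{GoeRiaTamZal2023}.
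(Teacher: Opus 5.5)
Your proof is correct and follows essentially the same route as the paper: both inclusions are checked directly from the definition of the conic hull, using $\lambda = r/\alpha$ and $z = x/\lambda$ for the reverse inclusion. The only difference is cosmetic: the paper splits on $x = 0_Z$ versus $x \neq 0_Z$, whereas you split on $r = 0$ versus $r > 0$. Your split makes $\lambda > 0$ explicit whenever you divide by it.
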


\begin{proof}
By definition
\[
\operatorname{cone}(B_Z\times \{\alpha\})=\{\lambda (x,\alpha) : \lambda\ge 0,\ x\in B_Z\}.
\] 
\noindent If $(x,r)=\lambda(z,\alpha)$ with $z\in B_Z$, then $\|z\|\le1$ and hence
\[
\|x\|=\lambda\|z\|\le\lambda=r/\alpha.
\]
Thus $r\ge\alpha\|x\|$. Conversely, given $(x,r)$ with $r\ge\alpha\|x\|$, set $\lambda=r/\alpha$ and, if $x\neq0$, let $z:=x/\lambda$; then $\|z\|\le1$ and $(x,r)=\lambda(z,\alpha)$.  
For $x=0$, take $z=0$.
The proof is complete.

\end{proof}

\begin{theorem}\label{teor:TML_real_single_valued}
Let $Y=\mathbb{R}$, $Y_+=\mathbb{R}_+$, and set
\[
r_0 := \inf\{f(x) : x\in\Omega,\ g(x)=0_Z\},
\]
assuming $r_0\in\mathbb{R}$.  
Suppose that the (possibly set-valued) map $V=f\circ g^{-1}$ is Lipschitzian at $0_Z$. Then, for any $\mu>L_{V,0_Z}$, we have
\[
r_0 = \inf\{f(x)+\mu \, \|g(x)\| : x\in\Omega\}. \tag{P[$\mu$]}
\]
Moreover, if $r_0 = f(x_0)$ for some feasible $x_0 \in \Omega$, then $x_0$ also solves $(P[\mu])$.
\end{theorem}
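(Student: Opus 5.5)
The plan is to specialize the construction of Proposition~\ref{Claim3} to $Y=\mathbb{R}$, $C=\mathbb{R}_+$, taking as base a singleton $B=\{\alpha\}$. By Lemma~\ref{lem: 3 conos}, the resulting process has graph $\operatorname{Gph}(\Delta)=\{(z,r): r\ge \alpha\|z\|\}$, so $\Delta=\Upsilon(\alpha\|\cdot\|)$ with the continuous sublinear function $\varphi(z)=\alpha\|z\|$. We then want to show that $\varphi=\mu\|\cdot\|$ lies in $\mathcal{S}_{r_0}$ whenever $\mu>L_{V,0_Z}$, and conclude via Theorem~\ref{teorema_LM_caso_escalar_univalorado}.

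Rather than going through Lemma~\ref{lema:lema01}, whose constant $2L$ is wasteful, I would verify condition~\eqref{condicion_caso_real} directly. Fix $\mu>L_{V,0_Z}$ and choose $L$ with $L_{V,0_Z}<L<\mu$, so that $V(z)\subset V(0_Z)+L\|z\|[-1,1]$ for all $z\in U$.

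Take $x\in\Omega$ with $z:=g(x)\neq 0_Z$. Then $f(x)\in V(z)$, so there is $r\in V(0_Z)$ with $f(x)\ge r-L\|z\|\ge r_0-L\|z\|$, using $r_0=\inf V(0_Z)$. Since $\mu>L$ and $\|z\|>0$, we get $f(x)-r_0\ge -L\|z\|>-\mu\|z\|=-\varphi(g(x))$, which is exactly the strict inequality required in Theorem~\ref{teorema_LM_caso_escalar_univalorado}. The function $\varphi=\mu\|\cdot\|$ is clearly sublinear, continuous (hence lower semicontinuous) and finite everywhere, so Theorem~\ref{teorema_LM_caso_escalar_univalorado} yields $r_0=\inf\{f(x)+\mu\|g(x)\|:x\in\Omega\}$. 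It also gives that if $f(x_0)=r_0$ with $g(x_0)=0_Z$, then $x_0$ minimizes $(P[\mu])$; this is consistent with $\varphi(0_Z)=0$.

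The main obstacle is a domain subtlety: $V$ is defined only on the neighborhood $U$, and the Lipschitz-at-a-point inequality is only available for $z\in U$. However, the penalized problem ranges over all $x\in\Omega$, and $g(x)$ might lie outside $U$. I would handle this by reading the hypothesis ``$V$ Lipschitzian at $0_Z$'' as an inclusion valid for all $z$ in $g(\Omega)$, that is, for the domain of $V$ viewed as $f\circ g^{-1}$ on all of $Z$. This is how Definition~\ref{def:lipschitz_at_point} is applied with $w$ ranging over the whole domain, and it is the same implicit convention used in Proposition~\ref{Claim3}. I would note this explicitly.

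A second point to check is the case $V(0_Z)$ with $r_0$ not attained. This causes no difficulty, since the argument only uses $r\ge r_0$ for $r\in V(0_Z)$.
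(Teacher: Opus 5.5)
Your proof is correct and follows the paper's argument. The paper builds $\Delta_\mu$ with $\operatorname{Gph}(\Delta_\mu)=\operatorname{cone}(B_Z\times\{\mu\})$ and checks \eqref{ecuacion6_bis_bis} using exactly your estimate $r-r_0\ge-\gamma\|z\|>-\mu\|z\|$ (with $L_{V,0_Z}\le\gamma<\mu$); it then passes through $\Upsilon^{-1}$ to $S_\mu=\mu\|\cdot\|\in\mathcal{S}_{r_0}$ before invoking Theorem~\ref{teorema_LM_caso_escalar_univalorado}, whereas verifying that theorem's hypothesis for $\mu\|\cdot\|$ directly, as you do, is a harmless shortcut. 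Your point that the Lipschitz inclusion must hold for all $z\in g(\Omega)$ and not only for $z\in U$ is well taken and genuinely needed; the paper relies on the same reading implicitly when it applies the inclusion to every $(z,r)\in\operatorname{Gph}(f\circ g^{-1})$.
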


\begin{proof}
Since $Y_+=\mathbb{R}_+$, every $\mu>0$ is a base for it. Fix some $\mu>0$ and define $\Delta_{\mu}:Z\rightrightarrows \mathbb{R}$ by
\[
\operatorname{Gph}(\Delta_{\mu}):=\operatorname{cone}(B_Z\times\{\mu\}).
\]
By Remark~\ref{remark:conclusion_Delta_con_definicion}, $\Delta_{\mu}\in P(Z,\mathbb{R})$, and assertions (a)--(d) in part~\textnormal{(ii)} of Proposition~\ref{Claim3} hold. 

Let us now check that $\Delta_{\mu}$ also satisfies condition \eqref{ecuacion6_bis_bis} for $\mu$ large enough. In particular, we will show that 
\begin{equation}\label{ineq:aux-inclusion}
\operatorname{Gph}(-\Delta_{\mu})\cap
\bigl(\operatorname{Gph}(f\circ g^{-1})-(0_Z,r_0)\bigr)
\subset\{(0_Z,0)\},
\end{equation}
for every $\mu>L_{V,0_Z}$. Indeed, let $\mu>L_{V,0_Z}$. Then there exists $\gamma$ such that
$L_{V,0_Z}\le \gamma<\mu$ and, for every $(z,r)\in \operatorname{Gph}(f\circ g^{-1})$,
there exist $s_{0}\in f\circ g^{-1}(0_{Z})$ and $b\in[-1,1]$ satisfying
\[
r=s_{0}+\gamma\|z\|\, b \ge r_{0}-\gamma\|z\|.
\]
Since $\mu>\gamma$, it follows that $r-r_{0}>-\mu\|z\|$.

By Proposition~\ref{lem: 3 conos}, 
\[
\operatorname{Gph}(-\Delta_{\mu})=\{(z,-r)\in Z\times\mathbb{R} : (z,r)\in \operatorname{Gph}(\Delta_{\mu})\}=\{(z,r)\in Z\times\mathbb{R} : r\le -\mu\|z\|\}.
\]
Therefore, for every $z\neq 0_{Z}$ and $r\in f\circ g^{-1}(z)$, we have $(z,\, r-r_{0})\notin \operatorname{Gph}(-\Delta_{\mu})$, and \eqref{ineq:aux-inclusion} follows. As a consequence, $\Delta_{\mu} \in \Gamma_{r_0}$. Now, apply Proposition~\ref{proposition:biyeccion_Gamma_Gamma'}, in particular the definition of $\Upsilon$ given by \eqref{eq_defi_Upsilon}. Define $S_{\mu}:=\Upsilon^{-1}(\Delta_{\mu})\in \mathcal{S}_{r_0}$. Then $S_{\mu}(x)=\mu \|x\|$ for every $x\in Z$. Thus, the sublinear map $S_{\mu}$ belongs to $\mathcal{S}_{r_0}$ for every $\mu>L_{V,0_Z}$.

Finally, Theorem~\ref{teorema_LM_caso_escalar_univalorado} implies
\[
r_0=\inf\{f(x)+S_{\mu}(g(x)):x\in\Omega\}=\inf\{f(x)+\mu\|g(x)\|:x\in\Omega\},
\]
for every $\mu>L_{V,0_Z}$. If $x_0\in\Omega$ satisfies $g(x_0)=0_Z$ and $f(x_0)=r_0$, then
$f(x_0)+\mu\|g(x_0)\|=r_0$, 
so $x_0$ minimizes $(\mathrm{P}[\mu])$.
\end{proof}

\begin{coro}\label{coro:TML_real_single_valued}
Let $Y=\mathbb{R}$, $Y_{+}=\mathbb{R}_{+}$, and define
\[
r_0 := \inf\{f(x) : x\in\Omega,\ g(x)=0_Z\},
\]
assuming that $r_0\in\mathbb{R}$.  
Suppose that one of the following conditions holds:
\begin{itemize}
\item[(a)] The mappings $f$ and $g^{-1}$ are Lipschitz continuous, with constants $L_f$ and $L_{g^{-1}}$, respectively,
and $\mu>L_fL_{g^{-1}}$.
\item[(b)] The mapping $g^{-1}$ is Lipschitzian at $0_Z$, $f$ is Lipschitzian at every point
$x\in\Omega\cap g^{-1}(0_Z)$, and, in addition, one of the following conditions holds:
\begin{itemize}
\item[(b1)] $L_{f,g^{-1}(0_Z)}:=\sup\{L_{f,x}\colon x \in g^{-1}(0)\}<\infty$ and $\mu> L_{f,g^{-1}(0_Z)}\,L_{g^{-1},0_Z}$; 
\item[(b2)] $f$ is Lipschitz continuous with constant $L_f$, and $\mu>L_f\,L_{g^{-1},0_Z}$.
\end{itemize}
\end{itemize}
Then
\[
r_0=\inf\{f(x)+\mu\|g(x)\|:x\in\Omega\}. \tag{$\mathrm{P}[\mu]$}
\]
Moreover, if $r_0=f(x_0)$ for some feasible point $x_0\in\Omega$, then $x_0$ is also a
solution of $(\mathrm{P}[\mu])$.
\end{coro}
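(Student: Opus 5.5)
The corollary follows from Theorem~\ref{teor:TML_real_single_valued} once we show that $V=f\circ g^{-1}$ is Lipschitzian at $0_Z$ with $L_{V,0_Z}<\mu$. Lemma~\ref{lem:composition_lipschitz_explicit} gives exactly this bound. We apply that lemma with $F_1:=g^{-1}$, viewed as a set-valued map from (a neighbourhood $U$ of $0_Z$ in) $Z$ into $\Omega\subset X$, and $F_2:=f$, viewed as a single-valued set-valued map from $\Omega$ into $\mathbb{R}$. Then $F_2\circ F_1=V$, and $F_1(U)\subset\Omega$ plays the role of the set $V$ in that lemma (there $V$ denotes a subset of $X$, not our value map). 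We also need $V(z)\neq\varnothing$ on $U$, which is the standing assumption \eqref{Hipotesis_regularidad}.

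In case (b1), $g^{-1}$ is Lipschitzian at $0_Z$ with constant $L_{g^{-1},0_Z}$. Moreover, $f$ is Lipschitzian at every $x\in F_1(0_Z)=\Omega\cap g^{-1}(0_Z)$ with $\sup_x L_{f,x}=L_{f,g^{-1}(0_Z)}<\infty$. The lemma then yields $L_{V,0_Z}\le L_{f,g^{-1}(0_Z)}L_{g^{-1},0_Z}<\mu$.

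Case (b2) reduces to (b1). If $f$ is Lipschitz continuous with constant $L_f$ in the sense of Definition~\ref{def:lipschitz_continuous}, then for each $x$ we have $f(w)\in f(x)+L_f\|w-x\|B_{\mathbb{R}}$ for all $w$. Hence $L_{f,x}\le L_f$, so $L_{f,g^{-1}(0_Z)}\le L_f$. The bound then becomes $L_{V,0_Z}\le L_fL_{g^{-1},0_Z}<\mu$.

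Case (a) reduces to (b2). Lipschitz continuity of $g^{-1}$ on $U$ with constant $L_{g^{-1}}$ gives $g^{-1}(z)\subset g^{-1}(0_Z)+L_{g^{-1}}\|z\|B_X$. So $g^{-1}$ is Lipschitzian at $0_Z$ with $L_{g^{-1},0_Z}\le L_{g^{-1}}$, and therefore $L_{V,0_Z}\le L_fL_{g^{-1}}<\mu$. One caveat concerns exact bounds: if $L_{g^{-1}}$ is merely some admissible constant rather than the infimum, the inequality still holds, since the exact bound is an infimum.

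In all three cases, $V$ is Lipschitzian at $0_Z$ and $\mu>L_{V,0_Z}$. Theorem~\ref{teor:TML_real_single_valued} then gives $r_0=\inf\{f(x)+\mu\|g(x)\|:x\in\Omega\}$, and shows that a feasible $x_0$ with $f(x_0)=r_0$ solves $(\mathrm{P}[\mu])$.

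The only delicate point is the bookkeeping in applying Lemma~\ref{lem:composition_lipschitz_explicit}. Its hypothesis requires $f$ to be Lipschitzian at points of $F_1(0_Z)$, and it asks for $F_1(U)$ to lie in the domain of $F_2$; both hold because $g^{-1}$ takes values in $\Omega$. The remaining steps are routine verifications that global Lipschitz constants dominate the pointwise exact bounds.
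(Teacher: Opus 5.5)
Your proposal is correct and follows essentially the same route as the paper: you bound $L_{V,0_Z}$ strictly below $\mu$ via Lemma~\ref{lem:composition_lipschitz_explicit} and then invoke Theorem~\ref{teor:TML_real_single_valued}. The only difference is in case (a), where the paper argues directly that $f\circ g^{-1}$ is Lipschitz continuous with constant $L_fL_{g^{-1}}$, while you reduce (a) to (b2) and (b2) to (b1) by checking that the global constants dominate the pointwise exact bounds; this is equally valid.
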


\begin{proof}
In case \textnormal{(a)}, the composition $f\circ g^{-1}$ is Lipschitz continuous, with Lipschitz constant $L_fL_{g^{-1}}$. In particular,
$f\circ g^{-1}$ is Lipschitzian at $0_Z$ and satisfies
\[
L_{f\circ g^{-1},\,0_Z}\le L_fL_{g^{-1}}.
\]
Therefore, Theorem~\ref{teor:TML_real_single_valued} applies and yields the desired conclusion.

In case \textnormal{(b)}, Lemma~\ref{lem:composition_lipschitz_explicit} ensures that
$f\circ g^{-1}$ is Lipschitzian at $0_Z$ with a constant strictly smaller than $\mu$.
Hence, Theorem~\ref{teor:TML_real_single_valued} applies again and the result follows.
\end{proof}

\section{An Application to Set-Valued Vector Equilibrium Problems}\label{sec:equilibrium_application}
 
Having established the abstract Lagrange multiplier theory and its verification under Lipschitz conditions, we now demonstrate its applicability to vector equilibrium problems.

\noindent Vector equilibrium problems provide a unifying framework for a wide class of models arising in vector optimization, game theory, and variational analysis.
Their defining feature is the comparison of feasible deviations with respect to
a partial order induced by a convex cone in the outcome space.
Rather than focusing on scalarization techniques, we adopt a genuinely
set-valued and vectorial perspective, in which equilibrium conditions are
reformulated as constrained optimization problems in the image space.

The central idea in our approach is to embed the equilibrium condition into a parametric
optimization framework and to interpret the resulting multiplier as a geometric
object encoding admissible directions of comparison and constraint interaction.

\medskip

\noindent
\textbf{Set-valued vector equilibrium formulation.}

Let $X$ and $Y$ be normed spaces, and let $Y_+ \subset Y$ be a closed convex cone
inducing a partial order on $Y$.
Consider a set-valued bifunction
\[
\bar{F} : X \times X \rightrightarrows Y,
\]
which is single-valued on the diagonal of $X \times X$, that is, $\bar{F}(x,x)$ is a
singleton for every $x \in X$.
Let $\Sigma \subset X$ be a nonempty feasible set satisfying the consistency
condition
\[
\bar{F}(x,x) = \{0_Y\}
\quad \text{for all } x \in \Sigma,
\]
which models the absence of deviation effects at a given feasible point.

The associated \emph{set-valued vector equilibrium problem} $(E)$ consists in
finding a point $x_0 \in \Sigma$ such that no admissible deviation yields a strict
improvement with respect to the order induced by $Y_+$.

\begin{definition}
A point $x_0 \in \Sigma$ is said to be a solution of $(E)$ if $0_Y$ is a minimal element of the set
\[
\bar{F}(x_0,\Sigma) := \bigcup_{x \in \Sigma} \bar{F}(x_0,x),
\]
that is, if
\[
\bar{F}(x_0,x) \cap \bigl(-Y_+ \bigr) \subset Y_+,
\quad \forall x \in \Sigma.
\]
\end{definition}

This condition expresses a stability property: at an equilibrium point $x_0$,
every feasible deviation produces outcomes that are either dominated by, or
incomparable with, the reference value $0_Y$ with respect to the ordering cone
$Y_+$.

\medskip
\noindent
\textbf{Reduction to a constrained optimization problem.}
To connect the equilibrium problem $(E)$ our the Lagrange multiplier framework,
we fix a candidate equilibrium point $x_0 \in \Sigma$ and define the mappings
\[
F : X \rightrightarrows Y, \qquad  F(x) := \bar{F}(x_0,x),
\]
and
\[
G : X \rightrightarrows Y, \qquad G(x) := \bar{F}(x,x).
\]

The consistency condition implies that $G(x_0)=\{0_Y\}$, and hence $x_0$ is feasible for the constrained set-valued optimization problem
\[
\text{Minimize } F(x)
\quad \text{subject to } x \in X,\; 0_Y \in G(x),
\]
which is exactly of the form $(P(0_Y))$ studied in the previous sections with $Z=Y$. 
Moreover, since $x_0$ is a solution of (E), it follows that $0_Y \in \mathrm{ND}(P(0_Y))$, and therefore the hypothesis of Theorem~\ref{ThLagMult11} reduces, in this setting, to the condition $\Gamma_{0_Y} \neq \varnothing$.

In this reformulation, the equilibrium requirement is encoded as a constraint,
while the deviation map $F$ plays the role of the objective function.

\medskip
\noindent
\textbf{Existence and interpretation of a multiplier.}
The following result is a direct consequence of the general Lagrange multiplier  theory.

\begin{theorem}\label{Teorema_Equilibrio_E}
Let $x_0 \in \Sigma$ be a solution of the equilibrium problem $(E)$, and define
$F$ and $G$ as above.
Assume that the hypotheses of Theorem~\ref{ThLagMult11} are satisfied.
Then there exists a convex process $\Delta \in P(Y,Y)$ such that $0_Y$ is a
minimal point of the program
\begin{equation*}
\text{Minimize } F(x) + \Delta( G(x))
\quad \text{subject to } x \in \Omega. \tag{$P[\Delta_E]$}
\end{equation*}
Moreover,
\begin{equation}\label{InterseccionTeo_b'_E}
\Delta(G(x_0)) \cap (-Y_+) \subseteq Y_+.
\end{equation}
\end{theorem}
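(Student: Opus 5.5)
The plan is to reduce Theorem~\ref{Teorema_Equilibrio_E} to the second part of Theorem~\ref{ThLagMult11}, applied with $Z=Y$, $\Omega=X$ and $y_0=0_Y$, where $F(x)=\bar F(x_0,x)$ and $G(x)=\bar F(x,x)$. Under this choice the program $(P(0_Y))$ is exactly the constrained problem written just before the statement.

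The first step is to check that $0_Y$ is a \emph{minimal} point of $(P(0_Y))$, and not merely a nondominated one. By the consistency condition, $G(x_0)=\bar F(x_0,x_0)=\{0_Y\}$, so $x_0$ is feasible and $0_Y\in F(x_0)$. Hence $0_Y\in V(0_Y)=F(G^{-1}(0_Y))$.

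The second step is nondominance, which requires $V(0_Y)\cap(-Y_+)\subset Y_+$. Every point of $V(0_Y)$ lies in $\bar F(x_0,x)$ for some $x$ with $0_Y\in\bar F(x,x)$. The equilibrium condition gives $\bar F(x_0,x)\cap(-Y_+)\subset Y_+$ only for $x\in\Sigma$. We therefore need the feasible set of the reduced problem to lie inside $\Sigma$. The cleanest way is to read $\Omega$ in $(P[\Delta_E])$ as $\Sigma$, i.e.\ to restrict $F$ and $G$ to $\Sigma$. This is consistent with the notation of the statement, where $\Omega$ appears although the reformulation used $X$.

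I expect this step to be the main obstacle, because it is the only point where the reduction is not literally automatic. If one insists on $\Omega=X$, then an $x\notin\Sigma$ with $0_Y\in\bar F(x,x)$ could spoil nondominance. I would handle this by taking $\Omega:=\Sigma$, which is what the text implicitly asserts when it says $0_Y\in\mathrm{ND}(P(0_Y))$. With that choice, $0_Y\in\mathrm{Min}(P(0_Y))$, attained at $x_0$.

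The final step invokes the hypothesis "the hypotheses of Theorem~\ref{ThLagMult11} are satisfied", which here means $\Gamma_{0_Y}\neq\varnothing$, and picks any $\Delta\in\Gamma_{0_Y}\subset P(Y,Y)$. By Theorem~\ref{ThLagMult11}, $\Delta$ is a Lagrange multiplier, so $0_Y$ is nondominated for $(P[\Delta_E])$. By the "furthermore" part, since $0_Y\in F(x_0)$ with $x_0$ feasible, $0_Y$ is a minimal point of $(P[\Delta_E])$, achieved at $x_0$. The compatibility condition \eqref{InterseccionTeo_b'} transfers verbatim to \eqref{InterseccionTeo_b'_E}.

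As a remark, since $G(x_0)=\{0_Y\}$, \eqref{InterseccionTeo_b'_E} reduces to $\Delta(0_Y)\cap(-Y_+)\subset Y_+$. This is already the second inclusion of Assumption~\ref{hipotesisGeneral}(b), which gives an independent check.
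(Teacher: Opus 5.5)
Your proof is correct and matches the paper's argument: the theorem is read off from the ``furthermore'' part of Theorem~\ref{ThLagMult11} with $Z=Y$ and $y_0=0_Y$, using $G(x_0)=\{0_Y\}$ and $0_Y\in F(x_0)$, and your remark that \eqref{InterseccionTeo_b'_E} is just the second inclusion of Assumption~\ref{hipotesisGeneral}(b) is a valid independent check. You are right that the paper's claim that $x_0$ solving $(E)$ gives $0_Y\in\mathrm{ND}(P(0_Y))$ over all of $X$ tacitly needs $\{x\in X: 0_Y\in G(x)\}\subseteq\Sigma$; this holds in Example~\ref{ex:equilibrium_slanted_cone}, where $G(v)=(v(0)^2,0)$, but not in general. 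Shrinking $\Omega$ to $\Sigma$ is not the best repair, however: it makes $G\equiv\{0_Y\}$ on $\Omega$, so the constraint becomes vacuous and $V(z)=\varnothing$ for $z\neq 0_Y$, contrary to the standing assumption \eqref{Hipotesis_regularidad}, which is harmless only because the proof of Theorem~\ref{ThLagMult11} never uses it; simply reading ``the hypotheses of Theorem~\ref{ThLagMult11}'' as including $0_Y\in\mathrm{ND}(P(0_Y))$ closes the step while keeping $\Omega=X$.
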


The following example illustrates Theorem~\ref{Teorema_Equilibrio_E} in an infinite-dimensional setting with a nonstandard ordering cone.

\begin{example}\label{ex:equilibrium_slanted_cone}
Let $X := C[0,1]$ with the supremum norm and $Y := \mathbb{R}^2$ with the Euclidean norm. Define the ordering cone
\[
Y_+ := \Bigl\{(y_1,y_2)\in\mathbb{R}^2 : \tfrac{\sqrt{2}}{2}\,y_1 \le y_2 \le \tfrac{\sqrt{3}}{2}\,y_1, \ y_1 \ge 0\Bigr\}\subset \mathbb{R}^2,
\]
which is closed and convex. Let $\Sigma := \{u\in X : u(0)=0\}$ and define $\bar F:\Omega\times\Omega \to Y$ by
\[
\bar F(u,v) := \Bigl(u(0)^2 + (v(0)-u(0))^2, \int_0^1 (u(s)-v(s))\,ds\Bigr).
\]

\emph{Equilibrium point.} For $u\in\Sigma$, we have $\bar F(u,u)=(0,0)$, verifying the consistency condition. Setting $u_0 := 0 \in \Sigma$, for any $v\in\Sigma$ we obtain $\bar F(u_0,v)=(v(0)^2, -\int_0^1 v(s)\,ds)\in\mathbb{R}_+ \times \mathbb{R}$, which implies $\bar F(u_0,\Sigma)\cap(-Y_+\setminus\{0_Y\})=\varnothing$. Hence $u_0$ is a solution of $(E)$.

\emph{Multiplier process.} Define $F(v) := \bar F(u_0,v)$ and $G(v) := \bar F(v,v)$. Consider the process $\Delta:Y\rightrightarrows Y$ given by
\[
\Delta(z) := \Bigl\{y\in\mathbb{R}^2 : y_2 - \tfrac{\sqrt{2}}{2}y_1 \ge \|z\|, \ \tfrac{\sqrt{3}}{2}y_1 - y_2 \ge \|z\|, \ y_1 \ge 0\Bigr\}.
\]
One verifies directly that $\operatorname{Gph}(\Delta)$ is a closed convex cone by noting that the defining inequalities are preserved under positive scaling and convex combinations, and that the map $(z,y)\mapsto (y_2 - \tfrac{\sqrt{2}}{2}y_1 - \|z\|, \tfrac{\sqrt{3}}{2}y_1 - y_2 - \|z\|)$ is continuous with $\operatorname{Gph}(\Delta)$ being its inverse image of $[0,\infty)\times [0,+\infty)$.

\emph{Penalized problem.} For any $v\in\Omega$, we have $F(v)+\Delta(G(v)) \subset \mathbb{R}_+ \times \mathbb{R}$. Since $(\mathbb{R}_+ \times \mathbb{R}) \cap (-Y_+)=\{0_Y\}$, it follows that $0_Y$ is a minimal point of the penalized problem $(P[\Delta_E])$. Moreover, $\Delta(G(u_0)) \cap (-Y_+) = \{0_Y\}$, verifying condition~\eqref{InterseccionTeo_b'_E}.
\end{example}

\section*{Funding}
The authors have been supported by Project PID2021-122126NB-C32, funded by \\ MICIU/AEI/10.13039/501100011033 and by FEDER, A way of making Europe.

\end{document}